\numberwithin{equation}{section}
\numberwithin{figure}{section}
\newcommand{\C}{\mathbb{C}}
\newcommand{\R}{\mathbb{R}}
\newcommand{\N}{\mathbb{N}}
\newcommand{\F}{\mathcal{F}}
\renewcommand{\P}{\mathbb{P}}
\newcommand{\E}{\mathbb{E}}
\newcommand{\e}{\varepsilon}
\newcommand{\1}{\mathbbm{1}}
\newtheorem{Theorem}{Theorem}[section]
\newtheorem{Proposition}[Theorem]{Proposition}\newtheorem{Corollary}[Theorem]{Corollary}\newtheorem{Lemma}[Theorem]{Lemma}\newtheorem{Remark}[Theorem]{Remark}\newtheorem{Definition}[Theorem]{Definition}\newtheorem{Example}[Theorem]{Example}
\numberwithin{equation}{section}
\begin{document}

\title[Regular occupation measures of Volterra processes]{Regular occupation measures of Volterra processes}

\author{Martin Friesen}

\address[Martin Friesen]
{School of Mathematical Sciences
\\ Dublin City University
\\ Glasnevin, Dublin 9, Ireland}
\email{martin.friesen@dcu.ie}

\date{\today}

%\subjclass[2020]{Primary ; Secondary }

\keywords{stochastic Volterra equation; local non-determinism; occupation measure; local time; self-intersection time; stochastic sewing lemma; regularization by noise; nonlinear Young integral; self-interacting diffusions; distributional drift}

\begin{abstract} {
   We introduce a local non-determinism condition for Volterra It\^{o} processes that captures smoothing properties of possibly degenerate noise. By combining the stochastic sewing lemma with one-step Euler approximations, we first prove the joint space-time regularity for their occupation measure, self-intersection measure, and time marginals for such Volterra It\^{o} processes. As an application, we obtain the space-time regularity of local times and self-intersection times for rough perturbations of Gaussian Volterra processes, and construct a class of non-Gaussian Volterra I\^{o} processes that are $C^{\infty}$-regularising. Secondly, for the particular class of stochastic Volterra equations with H\"older continuous coefficients, using disintegration of measures for their Markovian lifts, we further establish the absolute continuity of finite-dimensional distributions. Finally, we prove the existence, uniqueness, and stability for self-interacting stochastic equations with distributional drifts.}
\end{abstract}

\maketitle

\allowdisplaybreaks

\section{Introduction}

\subsection{General overview}

In the past recent years, stochastic Volterra processes have gained increased attention due to their ability to capture the rough behaviour of sample paths. Such processes provide a flexible framework for modelling rough volatility in Mathematical Finance, see e.g. \cite{MR3494612, MR3778355, MR4188876, MR3805308}. In the absence of jumps, {a general form of such a process in the time-homogeneous case} is given by
\begin{align}\label{eq: CVSDE}
 X_t = g(t) + \int_0^t K_b(t,s)b(X_s)\, \mathrm{d}s + \int_0^t K_{\sigma}(t,s)\sigma(X_{s})\, \mathrm{d}B_s,
\end{align}
where $K_b: [0,T]^2 \longrightarrow \R^{d \times n}$ and $K_{\sigma}: [0,T]^2 \longrightarrow \R^{d \times k}$ denote non-anticipating Volterra kernels, $b: \R^d \longrightarrow \R^n$ the drift, $\sigma: \R^d \longrightarrow \R^{k \times m}$ the diffusion coefficient, $g: [0,T] \longrightarrow \R^d$ {a} $\mathcal{B}([0,T])\otimes \F_0$-measurable random variable, and $(B_t)_{t \geq 0}$ is an $m$-dimensional $(\F_t)_{t\in [0,T]}$-Brownian motion on a filtered probability space $(\Omega, \F, (\F_t)_{t \in [0,T]}, \P)$ satisfying the usual conditions. For regular Volterra kernels, strong existence and uniqueness of \eqref{eq: CVSDE} were studied in \cite{berger_mizel, protter_volterra} for Lipschitz continuous coefficients, and in \cite{PROMEL2023291} for H\"older continuous coefficients. In dimension $d = 1$, strong solutions for H\"older continuous coefficients and fractional kernels were studied in \cite{mytnik, PROMEL2023291b}. Finally, for the weak existence of solutions, we refer to \cite{MR4019885, AbiJaber_branching, PROMEL2023291a}. Note that such processes are typically neither semimartingales nor Markov processes \cite{FGW25}, which introduces new challenges beyond classical Markov process theory.

In applications, stochastic Volterra processes are often confined to a region $D \subset \R^d$, i.e., $X_t \in D$ holds a.s.. For instance, in dimension $d = 1$ with $D = \R_+$, the \textit{Volterra Cox-Ingersoll-Ross process} models the instantaneous {variance} and is defined as the unique nonnegative weak solution of
\begin{align}\label{eq: Volterra CIR}
    X_t = x_0 + \int_0^t K(t-s)\left( b + \beta X_s\right)\, \mathrm{d}s + \sigma \int_0^t K(t-s)\sqrt{X_s}\, \mathrm{d}B_s,
\end{align}
where $x_0,b, \sigma \geq 0$, $\beta \in \R$, and $K \in L_{loc}^2(\R_+)$ is a completely monotone kernel that satisfies an additional regularity condition on its increments, see \cite[Theorem 6.1]{MR4019885}. Observe that $\sigma(x) = \sqrt{x}$ in \eqref{eq: Volterra CIR} is merely H\"older continuous and degenerate at the boundary of the state-space, i.e., $\sigma(0) = 0$. This degeneracy is a generic feature that confines the process to its state space, see \cite{MR4019885} for $D = \R_+^m$, \cite{MR4832220} for convex cones $D$, {and \cite{MR4836009} for compact state-spaces $D \subset \R^d$ in the context of polynomial Volterra processes.} Both features, H\"older continuous coefficients and degeneracy of $\sigma$ at the boundary of its state-space, form core difficulties in the study of \eqref{eq: Volterra CIR}, and general stochastic Volterra equations \eqref{eq: CVSDE}. While for the classical case $K(t) \equiv 1$, several tools are available to study the boundary behaviour in terms of Feller conditions and the regularity of its distribution up to the boundary (see \cite{MR2857449, FJKR22, MR4195178}), much less is known for its Volterra counterpart, and even less for general equations of the form \eqref{eq: CVSDE}. {At this point, it is worth noting the results in \cite{BP24} for regular Volterra kernels.}

In this work, we address the regularity of the one-dimensional distribution $\mathcal{L}(X_t)$, investigate the absolute continuity of finite-dimensional distributions $\mathcal{L}(X_{t_1}, \dots, X_{t_n})$, and study the regularity for the corresponding local and self-intersection time of the Volterra process. For Markov processes, such regularity results are often essential for weak convergence rates in numerical schemes, are frequently studied for irreducibility and stochastic stability, and play an important role in nonparametric inference methods. For stochastic Volterra processes, the absolute continuity of distributions is also used to show that solutions of \eqref{eq: CVSDE} are, in general, not Markov processes \cite{FGW25}. While for smooth coefficients the regularity of $\mathcal{L}(X_t)$ might be studied using the powerful methods based on Malliavin calculus \cite{MR2857449}, such results do not apply to \eqref{eq: CVSDE} with H\"older continuous coefficients. Likewise, while for Markov processes the Chapman-Kolmogorov equations provide a representation of finite-dimensional distributions in terms of their one-dimensional laws, the failure of the Markov property for \eqref{eq: CVSDE} rules out such an approach. Finally, while the regularity of local times was recently studied for Gaussian Volterra processes \cite{BLM23, MR4342752}, Volterra L\'evy processes \cite{MR4488556}, or equations with smooth coefficients driven by fractional Brownian motion \cite{LOU20173643}, the general case of \eqref{eq: CVSDE} has not been addressed in the literature.

\subsection{Regularity of occupation measures, self-intersection measures, and time-marginals}

While our motivation stems from applications to stochastic Volterra equations \eqref{eq: CVSDE}, to allow for additional flexibility in applications, in our main results, we treat the general class of Volterra It\^{o} processes of the form 
\begin{align}\label{eq: V Ito process}
 X_t &= g(t) + \int_0^t K_b(t,s)b_s\, \mathrm{d}s + \int_0^t K_{\sigma}(t,s)\sigma_s\, \mathrm{d}B_s.
\end{align}
The extension from \eqref{eq: CVSDE} to Volterra It\^{o} processes allows us to treat a large class of stochastic processes that includes stochastic Volterra equations with time-dependent coefficients, with random coefficients, and that arise as projections of Markovian lifts of \eqref{eq: CVSDE}.

Let $X$ be a Volterra It\^{o} process of the form \eqref{eq: V Ito process}. Its occupation and self-intersection measures are for Borel sets $A \in \mathcal{B}(\R^d)$ defined by 
\begin{align}\label{eq: no weight}
    \ell_t(A) = \int_0^t \1_A(X_r)\, \mathrm{d}r \quad \text{and} \quad 
    G_t(A) = \int_0^t \int_0^t \1_A(X_{r_2} - X_{r_1})\, \mathrm{d}r_1\mathrm{d}r_2.
\end{align}
Their densities with respect to the Lebesgue measure, provided they exist, are called the local time and the self-intersection local time, respectively. Formally, they are given by $\ell_t(x) = \int_0^t \delta_x(X_r)\, \mathrm{d}r$ and similarly $G_t(x) = \int_0^t \int_0^s \delta_x(X_{r_2} - X_{r_1})\, \mathrm{d}r_1 \mathrm{d}r_2$. Such densities encode information about the roughness and non-determinism of the sample paths. Indeed, the function $\ell_t$ is typically smooth for highly irregular paths, while for regular paths $\ell_t$ may not even exist \cite{MR350833}. 

To establish the existence and regularity of these densities, we propose a local non-determinism condition that extends the definitions of \cite{MR4488556, MR4342752} to Volterra It\^{o} processes, and covers cases where $\sigma_t$ may be degenerate (see, e.g., \eqref{eq: Volterra CIR} where $\sigma_t = \sqrt{X_t}$). Namely, we suppose that there exist constants $H>0$, $C_T > 0$, and a progressively measurable process $\rho: \Omega \times [0,T] \longrightarrow [0,1]$ such that for all $s,t \in [0,T]$ with $s \leq t$ and $|\xi| = 1$,
\begin{align}\label{eq: local nondeterminism intro}
        \int_s^t |\sigma_r^{\top}K_{\sigma}(t,r)^{\top}\xi|^2 \mathrm{d}r \geq C_T (t-s)^{2H}\rho_s^2 \quad \text{a.s.}
\end{align}
The newly introduced process $(\rho_t)_{t \in [0, T]}$ reflects possible degeneracies of the noise at the boundary of the state space. However, if $\sigma_t \sigma_t^{\top}$ is a.s. uniformly bounded from below, then we can set $\rho \equiv 1$ and recover the classical definition of local non-determinism.

Based on a combination of the stochastic sewing lemma with one-step Euler approximations in the spirit of \cite{MR2668905, FJ2022, MR3885546}, we derive in Section 3 the regularity in Fourier-Lebesgue spaces (see Section 2 for precise definitions) for the weighted analogues of $\ell_t$ and $G_t$, defined as
\begin{align}\label{eq: weighted measures}
    \ell^{\delta}_t(A) = \int_0^t \rho_r^{\delta}\1_A(X_r)\, \mathrm{d}r \quad \text{and} \quad 
    G^{\delta}_t(A) = \int_0^t \int_0^t \rho_r^{\delta} \rho_s^{\delta}\1_A(X_s - X_r)\, \mathrm{d}r\mathrm{d}s,
\end{align}
where $\delta \geq 0$ denotes a weight parameter and $\rho$ stems from the local non-determinism condition \eqref{eq: local nondeterminism intro}. In particular, if $\delta = 0$, we recover the classical notions of occupation and self-intersection measures. {Finally, for classical diffusion processes in dimension $d = 1$ with $K \equiv 1$, the choice $\rho_r = 1 \wedge \sigma(X_r)^2 = 1 \wedge \frac{d}{dr}\langle X\rangle_r$ reduces to the definition of the local time via the Tanaka's formula with $\delta = 1$.}  

As an illustration of our results, we prove space-time H\"older continuity of the local time and self-intersection time for perturbations $X_t = \psi_t + B_t^H$ of the fractional Brownian motion $B^H$ with rough perturbations $\psi_t = g(t) + \int_0^t K_b(t,s)b_s\, \mathrm{d}s$. Such a result complements the regularity derived in \cite{BLM23} for perturbations $\psi_t$ of finite variation. Secondly, we construct a general class of Volterra It\^{o} processes that are $C^{\infty}$-regularising in the sense that their local times a.s. belong to the space of smooth test functions $\mathcal{D}(\R^d)$. This extends the class of $C^{\infty}$-regularising processes from the strictly Gaussian frameworks studied in \cite{MR4342752} to general Volterra It\^{o} processes.  

Concerning the regularity of time-marginals of \eqref{eq: V Ito process}, define the weighted law $\mu_t^{\delta}(A) = \E\left[ \rho_t^{\delta} \1_A(X_t) \right]$, where $\delta \geq 0$. Here, $\delta = 0$ corresponds to the standard, unweighted law of $X_t$, while in view of \eqref{eq: local nondeterminism intro}, the factor $\rho_t^{\delta}$ suppresses for $\delta > 0$ those regions where the noise vanishes. We prove that $\mu_t^{\delta}$ exhibits dimension-dependent regularity in Fourier-Lebesgue spaces, and dimension-independent regularity in the Besov space $B_{1,\infty}^{\lambda}(\R^d)$. In particular, if $\sup_{t \in [0,T]}\E[\rho_t^{-1}] < \infty$, then we show that the law $\mathcal{L}(X_t)$ is absolutely continuous and its density belongs to $B_{1,\infty}^{\lambda}(\R^d)$. 

Having in mind stochastic Volterra processes \eqref{eq: CVSDE} with \textit{merely H\"older continuous} and possibly degenerate diffusion coefficients, the local non-determinism condition \eqref{eq: local nondeterminism intro} takes the form 
\[
     \int_{s}^{t} | \sigma(X_r)^{\top}K_{\sigma}(t,r)^{\top}\xi|^2 \mathrm{d}r \geq C_* (t-s)^{2H}\sigma_*(X_s)^2
\]
for all $t \in (0,T]$, $s \in (0,t]$, and $|\xi| = 1$, where $C_*, H > 0$ are constants, and $0 \leq \sigma_* \in C^{\theta}(\R^d)$ with $\theta \in (0,1]$ and $\sup_{x} \sigma_*(x) \le 1$ denotes a spatial weight function that captures the behaviour of the diffusion coefficient up to the boundary of its state space. A natural choice for this weight is $\sigma_*(x) = 1 \wedge \lambda_{\min}(\sigma(x)\sigma(x)^{\top})$, where $\lambda_{\min}$ denotes the smallest eigenvalue of $\sigma \sigma^{\top}$. In Section 4, we derive the regularity of the corresponding local time, self-intersection time, and the weighted law $\mu_t^{\delta}(A) = \E\left[ \sigma_*(X_t)^{\delta} \1_A(X_t) \right]$ as particular cases of \eqref{eq: V Ito process}. Afterwards, we address the absolute continuity of the finite-dimensional distributions $(X_{t_1}, \dots, X_{t_n})$. To overcome the lack of the Markov property, we develop a method based on Markovian lifts combined with the disintegration of measures, which allows us to recover a weak form of the Chapman-Kolmogorov equations.

\subsection{Stochastic equations with distributional self-interactions}

In the last part of this work, we focus on the solution theory for self-interacting processes of the form
\begin{align}\label{eq: selfintersecting Volterra diffusion process}
    X_t = x_0 + \int_0^t \int_0^t b(X_s - X_r)\, \mathrm{d}r \mathrm{d}s + Z_t,
\end{align}
where $(Z_t)_{t \geq 0}$ is a continuous stochastic process. Such equations appear, e.g., as models for polymer growth, and in the broader theory of self-interacting diffusions (see, for instance, \cite{MR1165516, MR2952088, MR2451570} for Brownian Polymer models, \cite{MR1883716, MR1348356} for self-intersecting diffusions, and \cite{MR4894548} for extensions towards models driven by the fractional Brownian motion). Within these models, $Z$ captures the random thermal fluctuations (kinetic kicks) imparted by the surrounding medium, $b$ models the self-interaction potential dictating how the polymer is actively repelled from or attracted to regions it has already visited, while $(X_t)_{t \geq 0}$ describes the macroscopic spatial position of the polymer's growing tip at either the contour length or time $t$.

The choice of the interaction kernel $b$ reflects the microscopic physics of the system. In the context of polymer physics, the \textit{excluded volume effect} states that no two segments of a polymer chain can occupy the exact same point in space. To model such self-avoidance, interactions must act instantaneously when the path intersects itself (i.e. when $X_s = X_r$), and ideally forces $b$ to take the form of a spatial gradient $\nabla \delta_0$ of the Dirac distribution. Power-law kernels of the form $b(x) = \nabla |x|^{-\alpha}$ with $\alpha \in (0,d)$ provide a class of singular drifts with nonlocal interactions, while regular kernels $b$, e.g. Gaussian kernels, are also considered in the literature. While for regular interaction kernels $b$, equation \eqref{eq: selfintersecting Volterra diffusion process} can be studied by traditional methods, in this work, we focus on the case of singular interaction kernels $b$ that belong to a space of distributions with possibly negative regularity. 

The study of \eqref{eq: selfintersecting Volterra diffusion process}, even the definition of the integral therein, requires regularisation by noise and hence rough drivers $Z$. Such regularisation by noise phenomena was first observed in \cite{MR0336813} for the simple initial value problem 
\begin{align}\label{eq: rough ODE}
    u(t) = x_0 + \int_0^t b(u(s))\, \mathrm{d}s + Z_t
\end{align}
with $Z$ being the Brownian motion, see also \cite{MR0568986, MR1864041, MR2172887, MR2117951}. The case where $Z$ is a fractional Brownian motion was treated in \cite{MR1934157, MR2073441}. While in all of these results, the equation is solved in a pathwise strong sense, \cite{MR2377011} establishes \textit{path-by-path uniqueness} which treats the equation as an ordinary differential equation perturbed by a single path $z$ sampled from the Brownian motion $Z$. More recently, in \cite{CATELLIER20162323} the authors establish pathwise regularization by noise phenomena for \eqref{eq: rough ODE} treated as a deterministic equation with $Z$ sampled from the fractional Brownian motion with drift $b$ that belongs to a Besov space with negative regularity (or the Fourier Lebesgue space defined in Section \ref{sec: preliminaries}). The authors consider controlled solutions of the form $u(t) = \theta_t + Z_t$ where $\theta$ formally satisfies the corresponding nonlinear Young equation. Further extensions have been studied, e.g., in \cite{BLM23, GG22, MR4488556, MR4342752}, while an overview of the nonlinear Young integral and related equations is given in \cite{G21}.

In this work, we study \eqref{eq: selfintersecting Volterra diffusion process} for distributional drifts $b \in \mathcal{S}'(\R^d)$ driven by a rough process $Z$. Our method is based on the ansatz $X_t = \theta_t + Z_t$ with $\theta$ solving
\begin{align}\label{eq: 2}
    \theta_t = u_0 + \int_0^t \int_0^t b(\theta_{r_2} - \theta_{r_1} + Z_{r_2} - Z_{r_1})\, \mathrm{d}r_1 \mathrm{d}r_2,
\end{align}
where the integral on the right-hand side has to be understood as a two-parameter Young integral with respect to the two-parameter self-intersection measure $G_{t_1,t_2}(A) = \int_{0}^{t_2}\int_{0}^{t_1} \1_{A}(X_{r_2} - X_{r_1})\, \mathrm{d}r_1 \mathrm{d}r_2$. Under the assumption that $Z$ has a sufficiently smooth self-intersection time, we prove the existence, uniqueness, and stability of solutions to \eqref{eq: 2} and hence \eqref{eq: selfintersecting Volterra diffusion process}. As a particular example, the equation
\[
    X_t = x_0 + \theta \int_0^t \int_0^t \nabla \delta_0(X_s - X_r)\, \mathrm{d}r\mathrm{d}s + B_t^H
\]
with a fractional Brownian motion $B^H$ with Hurst parameter $H$, and $\theta \neq 0$, has a unique solution whenever $H < \frac{1}{d+4}$. The choices $b(x) = \delta_0$ in dimension $d=1$, $b(x) = \theta \nabla \chi(x) x^{-\alpha}$ with general $d \geq 1$, and $b(x) = \theta \chi(x) \mathrm{sgn}(x)$ in $d=1$ with a smooth and compactly supported cutoff function $\chi$ satisfying $\chi \equiv 1$ in a neighourhood of the origin, provide other classes of examples covered by this work.

\subsection{Structure of the work}

This work is organised as follows. In Section 2, we introduce the function spaces used in this work, recall the classical stochastic sewing lemma, and prove a simple criterion that provides a priori bounds on the Fourier transform of the weighted local time. In Section 3, we study the regularity of distributions for the general class of Volterra It\^{o} processes and provide some examples. Section 4 is dedicated to the particular application to stochastic Volterra processes of the form \eqref{eq: CVSDE}, where, in particular, we prove that its f.d.d. are absolutely continuous. Section 5 concerns the regularisation by noise phenomenon for stochastic equations with distributional self-intersections.

\section{Preliminaries}
\label{sec: preliminaries}

\subsection{Function spaces}

Let $\mathcal{S}(\R^d)$ be the space of Schwartz functions over $\R^d$ and let $\mathcal{S}'(\R^d)$ be its dual space of tempered distributions. The Fourier transform on $\mathcal{S}(\R^d)$ and its extension onto $\mathcal{S}'(\R^d)$ is denoted by $\mathcal{F}$. Sometimes we also write $\widehat{f} = \mathcal{F}f$. Note that $\mathcal{F}$ is an isomorphism on $\mathcal{S}(\R^d)$ as well as on $\mathcal{S}'(\R^d)$ with inverse denoted by $\mathcal{F}^{-1}$. Let $L^p(\R^d)$ be the standard Lebesgue space on $\R^d$ with $p \in [1,\infty]$. We denote its norm by $\|\cdot \|_{L^p}$, and let $L_{loc}^p(\R^d) \subset L^p(\R^d)$ be its subspace of locally $p$-integrable functions. 

The convolution of $f \in \mathcal{S}(\R^d)$ and $g \in \mathcal{S}'(\R^d)$ is defined by $f \ast g = \mathcal{F}^{-1}(\widehat{f} \cdot \widehat{g})$ in $\mathcal{S}'(\R^d)$. Young's inequality plays an essential role in determining if this convolution is more regular, i.e., can be extended to a bilinear mapping on different scales of Banach spaces. {Among such, the the scale of Besov spaces $B_{p,q}^s(\R^d)$ plays a central role. To define the latter, let $(\varphi_j)_{j \geq 0}$ be a smooth dyadic partition of unity, i.e. a collection of smooth functions with values in $[0,1]$ such that $\mathrm{supp}(\varphi_0) \subseteq \{ \xi \in \R^d \ : \ |\xi| \leq 2\}$ and $\mathrm{supp}(\varphi_j) \subseteq \{ \xi \in \R^d \ : \ 2^{j-1} \leq |\xi| \leq 2^{j+1}\}$ for $j \geq 1$, $\sup_{x \in \R^d}2^{j |\alpha|}|D^{\alpha}\varphi_j(x)| < \infty$ for any multi-index $\alpha \in \N_0^d$, and $\sum_{j=0}^{\infty}\varphi_j = 1$. For any $f \in \mathcal{S}'(\R^d)$ we define the dyadic Littlewood-Paley blocks by $\Delta_j f := \mathcal{F}^{-1}(\varphi_j \cdot \mathcal{F}(f)) \in \mathcal{S}(\R^d)$. The Besov space $B_{p,q}^s(\R^d)$ with $1 \leq p,q \leq \infty$, and $s \in \R$ consists of all $f \in \mathcal{S}'(\R^d)$ with finite norm
\[
    \| f\|_{B_{p,q}^s} = \left( \sum_{j=0}^{\infty} 2^{jsq} \| \Delta_j f\|_{L^p(\R^d)}^q\right)^{\frac{1}{q}}, \qquad 1 \leq q < \infty,
\]
and if $q = \infty$ with the obvious modification $\|f\|_{B_{p, \infty}^s} = \sup_{j \geq 0}2^{js}\| \Delta_j f\|_{L^p(\R^d)}$. For further details and properties on these spaces, we refer to \cite{MR3243734, MR1163193}, while a Young inequality for convolutions in this scale of Banach spaces was obtained in \cite[Theorem 2.1, Theorem 2.2]{MR4339468}.}

As a particular case, let us denote by
\[
    \mathcal{C}^s(\R^d) := B_{\infty,\infty}^s(\R^d)
\]
the H\"older-Zygmund space. For $s > 0$ such that $s \not \in \N_0$ the H\"older-Zygmund space $\mathcal{C}^s(\R^d)$ coincides with the space of bounded H\"older continuous functions $C_b^{s}(\R^d)$. For $s \in \N_0$, $\mathcal{C}^s(\R^d)$ is larger than the classical H\"older space. Similarly, let $H_p^s(\R^d)$ be the fractional Sobolev space with $1 < p < \infty$. Due to the Littlewood-Paley theory, these spaces can be characterised through the Fourier multiplier {$\langle \cdot \rangle: \R^d \longrightarrow \R_+, \langle \xi \rangle = (1+|\xi|^2)^{1/2}$} by
\[
    H_p^s(\R^d) = \{ f \in S'(\R^d) \ : \ \mathcal{F}^{-1}( \langle \cdot \rangle^s \widehat{f}) \in L^p(\R^d) \},
\]
where $\| f\|_{H_p^s} = \|\mathcal{F}^{-1}( \langle \cdot \rangle^s \widehat{f})\|_{L^p(\R^d)}$ defines an equivalent norm. Note that $H_p^k(\R^d) = W^{k,p}(\R^d)$ denotes the classical Sobolev space when $k \in \N_0$. 

To capture the regularity of local times, we introduce, similarly to \cite{CATELLIER20162323}, the Fourier-Lebesgue spaces 
\[
 \mathcal{F}L_p^s(\R^d)
 = \{ f \in \mathcal{S}'(\R^d) \ : \ \widehat{f} \in L_{loc}^p(\R^d) \ \text{ and } \ \| f\|_{\mathcal{F}L_p^s} < \infty \}
\]
where $p \in [1,\infty]$, $s \in \R$, and the norm is given by $\| f\|_{\mathcal{F}L_p^s} = \| \langle \cdot\rangle^s  \widehat{f}\|_{L^p(\R^d)}$. By H\"older inequality, one can verify that these spaces satisfy for all $1 \leq q \leq p \leq \infty$ and $s \in \R$
\[
 \mathcal{F}L_p^s(\R^d) \hookrightarrow \mathcal{F}L_q^{s - \left( \frac{1}{q} - \frac{1}{p}\right)d - \e}(\R^d), \qquad \forall \e > 0.
\]
Finally, given $p, p_0,p_1 \in [1,\infty]$ such that $\frac{1}{p_0} + \frac{1}{p_1} = \frac{1}{p}$, $s_0, s_1 \in \R$, $f \in \mathcal{F}L_{p_0}^{s_0}(\R^d)$, and $g \in \mathcal{F}L_{p_1}^{s_1}(\R^d)$, then the convolution $f \ast g$ is well-defined in $\mathcal{F}L_{p}^{s_0+s_1}(\R^d)$, and the following Young inequality holds
\begin{align}\label{eq: Young FL}
  \| f \ast g\|_{\mathcal{F}L_p^{s_0 + s_1}} \leq \| f\|_{\mathcal{F}L_{p_0}^{s_0}}\| g\|_{\mathcal{F}L_{p_1}^{s_1}}.
\end{align}
Of particular interest are the embeddings of the Fourier-Lebesgue space into the H\"older-Zygmund scale and the fractional Sobolev spaces as stated below.

\begin{Lemma}\label{lemma: Fourier Lebesgue embedding}
    If $s \in \R$ and $1 < p \leq 2$, then 
    \[
        \mathcal{F}L_p^s(\R^d) \hookrightarrow H_{\frac{p}{p-1}}^s(\R^d) \ \text{ and } \ 
        H_{p}^s(\R^d) \hookrightarrow \mathcal{F}L_{\frac{p}{p-1}}^s(\R^d),
    \]
    while for $p = 1$ we obtain $\mathcal{F}L_1^s(\R^d) \hookrightarrow \mathcal{C}^s(\R^d)$.
\end{Lemma}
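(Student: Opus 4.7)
The plan is to derive all three embeddings from the Hausdorff--Young inequality, which for $1 \leq p \leq 2$ with conjugate $p'=p/(p-1)$ asserts $\|\mathcal{F}h\|_{L^{p'}} \leq C\|h\|_{L^p}$ and the same bound for $\mathcal{F}^{-1}$. This is the natural tool because both the Fourier--Lebesgue and fractional Sobolev norms are expressed via $\langle \cdot\rangle^s \widehat{f}$, only on different sides of the Fourier transform.

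For the first inclusion $\mathcal{F}L_p^s \hookrightarrow H_{p'}^s$, I would set $g := \langle \cdot\rangle^s \widehat{f}$, note that $\|f\|_{\mathcal{F}L_p^s}=\|g\|_{L^p}$ by definition, and apply Hausdorff--Young to $\mathcal{F}^{-1}g$ to get
\[
 \|f\|_{H_{p'}^s} = \|\mathcal{F}^{-1}(\langle\cdot\rangle^s\widehat{f})\|_{L^{p'}} \leq C \|g\|_{L^p} = C\|f\|_{\mathcal{F}L_p^s}.
\]
The second inclusion $H_p^s \hookrightarrow \mathcal{F}L_{p'}^s$ is perfectly symmetric: setting $h := \mathcal{F}^{-1}(\langle\cdot\rangle^s\widehat{f})$ gives $\|h\|_{L^p}=\|f\|_{H_p^s}$ and $\widehat{h}=\langle\cdot\rangle^s\widehat{f}$, so Hausdorff--Young applied to the forward transform yields $\|f\|_{\mathcal{F}L_{p'}^s}=\|\widehat{h}\|_{L^{p'}}\leq C\|h\|_{L^p}$. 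Both arguments genuinely require $1<p\leq 2$ so that $p'\in[2,\infty)$ and the Hausdorff--Young exponent is admissible.

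For the endpoint $p=1$ case $\mathcal{F}L_1^s \hookrightarrow \mathcal{C}^s = B_{\infty,\infty}^s$, Hausdorff--Young is replaced by the trivial bound $\|\mathcal{F}^{-1}u\|_{L^\infty}\leq \|u\|_{L^1}$. Applied to the Littlewood--Paley blocks $\Delta_j f = \mathcal{F}^{-1}(\varphi_j \widehat{f})$ this yields
\[
 \|\Delta_j f\|_{L^\infty} \leq \|\varphi_j \widehat{f}\|_{L^1} \leq C \, 2^{-js} \int_{\mathrm{supp}(\varphi_j)} \langle\xi\rangle^s |\widehat{f}(\xi)|\, \mathrm{d}\xi \leq C\, 2^{-js}\|f\|_{\mathcal{F}L_1^s},
\]
where I use that $\langle\xi\rangle \asymp 2^j$ on $\mathrm{supp}(\varphi_j)$ for $j\geq 1$ (and is bounded for $j=0$). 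Multiplying by $2^{js}$ and taking the supremum over $j$ gives the claim.

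The only non-routine point is the behaviour of the weight $\langle\xi\rangle^s$ on the dyadic annuli; apart from that, every step is an invocation of Hausdorff--Young or its $L^1\to L^\infty$ endpoint, so I do not anticipate any substantive obstacle.
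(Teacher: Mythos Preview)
Your proposal is correct and follows essentially the same route as the paper: both arguments reduce all three embeddings to the Hausdorff--Young inequality (respectively its $L^1\to L^\infty$ endpoint) applied to $\langle\cdot\rangle^s\widehat{f}$, and the $p=1$ case is handled identically via the Littlewood--Paley blocks and $\langle\xi\rangle\asymp 2^j$ on $\mathrm{supp}(\varphi_j)$. The only cosmetic difference is that the paper phrases the first embedding through the Bessel potential isomorphism $I^\sigma: H_q^s\to H_q^{s-\sigma}$, but this is merely a repackaging of the same computation you wrote directly.
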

\begin{proof}
 For $\sigma \in \R$ let $I^{\sigma}f = \mathcal{F}^{-1}(\langle \cdot \rangle^{\sigma} \widehat{f})$. It follows from \cite[p. 58-59]{MR3024598} that $I^{\sigma}: H^s_{q} \longrightarrow H^{s-\sigma}_{q}(\R^d)$ is a continuous linear isomorphism for all $s,\sigma \in \R$, $1 < p \leq 2$, and $q = \frac{p}{p-1}$. Let $f \in \mathcal{F}L_p^s(\R^d)$, then $g(\xi) := \langle \xi \rangle^{s}\widehat{f}(\xi)$ belongs to $L^p(\R^d)$ and hence by the Hausdorff-Young inequality $\mathcal{F}^{-1}g(x) = \widehat{g}(-x)$ belongs to $L^{q}(\R^d) \hookrightarrow H_{q}^0(\R^d)$. Hence we obtain $f = I^{-s}(I^sf) = I^{-s}\mathcal{F}^{-1}g \in H_{q}^s(\R^d)$. Conversely, if $f \in H_p^{s}(\R^d)$ with $1 < p \leq 2$, then we obtain
 \begin{align*}
    \|f\|_{\mathcal{F}L_{p/(p-1)}^s} 
    = \left\| \mathcal{F}\mathcal{F}^{-1}( \langle\cdot\rangle^s \widehat{f})\right\|_{L^{p/(p-1)}(\R^d)} 
    \lesssim \left\| \mathcal{F}^{-1}( \langle \cdot \rangle^s \widehat{f} )\right\|_{L^{p}(\R^d)} = \|f\|_{H_{p}^s}.
 \end{align*}
 
 Now let $p = 1$. Then $g \in L^1(\R^d)$ and hence $\mathcal{F}^{-1}g \in C(\R^d)$ vanishes at infinity. For the H\"older-Zygmund norm, we obtain
 \begin{align*}
     \| f\|_{\mathcal{C}^s} = \sup_{j \geq 0}2^{js}\| \mathcal{F}^{-1}(\varphi_j \widehat{f})\|_{L^{\infty}}
     \leq \sup_{j \geq 0}2^{js} \| \varphi_j \widehat{f}\|_{L^1}
     \lesssim \|\langle \cdot \rangle^s \widehat{f}\|_{L^1} = \|f\|_{\mathcal{F}L_1^s}.
 \end{align*}
 This proves the assertion.
\end{proof}

Let $(E, \| \cdot \|_{E})$ be a Banach space. Denote by $C([0,T]; E)$ the space of continuous functions from $[0,T]$ to $E$ equipped with the supremum norm $\|\cdot\|_{\infty}$. When $\alpha \in (0,1)$, then $C^{\alpha}([0,T]; E)$ denotes the space of $\alpha$-H\"older continuous functions equipped with the norm $\|f\|_{C^{\alpha}([0,T]; E)} = \|f\|_{\infty} + [f]_{C^{\alpha}([0,T]; E)}$ where
\[
    [f]_{C^{\alpha}([0,T]; E)} = \sup_{s \neq t}\frac{\|f_t - f_s\|_E}{|t-s|^{\alpha}}.
\]

\subsection{Stochastic sewing techniques}

For $T > 0$, and $n \in \N$ define 
\[
 \Delta_{T}^n = \{ (t_1,\dots, t_n) \in [0,T]^n\ : \ t_1 \leq \dots \leq t_n \}.
\]
Then $\Delta_T^1 = [0,T]$ and $\Delta_T^2 = \{ (s,t) \in [0,T]^2 \ : \ s \leq t \}$. Let $E$ be a Banach space. For a function $f: [0,T] \longrightarrow E$ we define a new function on $\Delta_{T}^2$ by $f_{s,t} = f_t - f_s$. Likewise, for a function $g: \Delta_{T}^2 \longrightarrow E$ we define a new function on $\Delta_{T}^3$ by setting $\delta_r g_{s,t} = g_{s,t} - g_{s,r} - g_{r,t}$ where $(s,r,t) \in \Delta_{T}^3$. 

Let $(E, \| \cdot \|_{E})$ be a Banach space. For $p \in [1,\infty]$ we let $L^p(\Omega, \P; E)$ be the standard $L^p$-space of $E$-valued random variables defined over a probability space $(\Omega, \F, \P)$. The corresponding norm is denoted by $\| X\|_{L^p(\Omega; E)} = \left( \E[\|X\|_E^p]\right)^{1/p}$ when $p \in [1,\infty)$ with obvious modifications for $p = \infty$. For $E = \R^d$ or $E = \C^d$, we simply write $L^p(\Omega, \P)$ and $\| \cdot\|_{L^p(\Omega)}$ for its norm. The following stochastic sewing lemma was shown in \cite{MR4089788}.
\begin{Lemma}\label{lemma: swl}
 Fix $T > 0$ and $p \geq 2$. Let $(\Omega, \F, (\F_t)_{t \in [0,T]}, \P)$ be a probability space with the usual conditions. Suppose $A: \Delta_{T}^2 \longrightarrow \R^d$ is a stochastic process in $L^p(\Omega)$ such that $A_{s,s} = 0$ and $A_{s,t}$ is $\F_t$-measurable for all $(s,t) \in \Delta_{T}^2$. Assume that there exist constants $\varkappa_1 > 1$, $\varkappa_2 > \frac{1}{2}$, and $C_1,C_2 \geq 0$ such that 
 \[
  \left\| \E\left[ \delta_r A_{s,t}\ | \ \F_s\right] \right\|_{L^p(\Omega)} \leq C_1|t-s|^{\varkappa_1}
 \]
 and
 \[
  \| \delta_r A_{s,t} \|_{L^p(\Omega)} \leq C_2|t-s|^{\varkappa_2}
 \]
 hold for all $(s,r,t) \in \Delta_{T}^3$. Then there exists a unique (up to modifications) $(\F_t)_{t \in [0,T]}$-adapted process $\mathcal{A}: [0,T] \longrightarrow \R^d$ with values in $L^p(\Omega, \P)$ such that $\mathcal{A}_{0} = 0$, and there exists another constant $c > 0$ that only depends on $\varkappa_1, \varkappa_2, d, p$ satisfying
 \[
  \left\| \mathcal{A}_{s,t} - A_{s,t} \right\|_{L^p(\Omega)} \leq c\left(C_1|t-s|^{\varkappa_1} + C_2 |t-s|^{\varkappa_2}\right)
 \]
 and
 \[
  \left\| \E\left[ \mathcal{A}_{s,t} - A_{s,t}\ | \ \F_s\right]\right\|_{L^p(\Omega)} \leq cC_1|t-s|^{\varkappa_1}.
 \]
 Finally, for all $(s,t) \in \Delta_{T}^2$ and each partition $\mathcal{P}_{s,t}$ of $[s,t]$, we have 
 \begin{align*}
  \lim_{|\mathcal{P}_{s,t}| \to 0}\left\| \sum_{[u,v] \in \mathcal{P}_{s,t}} A_{u,v} - \mathcal{A}_{s,t} \right \|_{L^p(\Omega)} = 0,
 \end{align*}
 where $|\mathcal{P}_{s,t}| = \sup_{[u,v] \in \mathcal{P}_{s,t}}(v-u)$ denotes the mesh size of $\mathcal{P}_{s,t}$.
\end{Lemma}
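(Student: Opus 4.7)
The plan is to construct $\mathcal{A}_{s,t}$ as the $L^p(\Omega)$-limit of the Riemann-type sums
\[
A^{\pi}_{s,t} := \sum_{[u,v]\in \pi} A_{u,v}
\]
along refining partitions of $[s,t]$, using a dyadic scheme in which the conditional hypothesis controls the projected means of the increments and the $L^p$-Burkholder--Davis--Gundy inequality controls the centred fluctuations. Concretely, let $\pi_n$ be the dyadic partition of $[s,t]$ into $2^n$ intervals of equal length $(t-s)2^{-n}$; for $[u,v]\in\pi_n$ with midpoint $m=(u+v)/2$ the identity $A^{\pi_{n+1}}_{s,t}-A^{\pi_n}_{s,t}=-\sum_{[u,v]\in\pi_n}\delta_m A_{u,v}$ reduces the problem to estimating $\sum_{[u,v]\in\pi_n}\delta_m A_{u,v}$ in $L^p(\Omega)$. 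I would then decompose each summand as
\[
\delta_m A_{u,v} \;=\; \E[\delta_m A_{u,v}\mid \F_u]\;+\;\eta_{u,v},
\]
so that the two hypotheses of the lemma address the two pieces in isolation.

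For the mean part, Jensen's inequality together with the first hypothesis yields
\[
\Bigl\|\sum_{[u,v]\in\pi_n}\E[\delta_m A_{u,v}\mid \F_u]\Bigr\|_{L^p(\Omega)} \leq C_1\, 2^n\,\bigl((t-s)\,2^{-n}\bigr)^{\varkappa_1} = C_1(t-s)^{\varkappa_1}\,2^{-n(\varkappa_1-1)}.
\]
For the centred part, order the intervals of $\pi_n$ from left to right: each $\eta_{u,v}$ is $\F_v$-measurable with $\E[\eta_{u,v}\mid \F_u]=0$, and since consecutive intervals share an endpoint the family $\{\eta_{u_i,v_i}\}_i$ is a martingale difference sequence with respect to $(\F_{v_i})_i$. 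The $L^p$-Burkholder--Davis--Gundy inequality for $p\geq 2$ combined with Minkowski's inequality then gives
\[
\Bigl\|\sum_{[u,v]\in\pi_n}\eta_{u,v}\Bigr\|_{L^p(\Omega)} \lesssim_p \Bigl(\sum_{[u,v]\in\pi_n}\|\eta_{u,v}\|_{L^p(\Omega)}^{2}\Bigr)^{1/2} \lesssim_p C_2(t-s)^{\varkappa_2}\,2^{-n(\varkappa_2-1/2)},
\]
using $\|\eta_{u,v}\|_{L^p}\leq 2\|\delta_m A_{u,v}\|_{L^p}\leq 2C_2(v-u)^{\varkappa_2}$ from the second hypothesis.

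Since $\varkappa_1>1$ and $\varkappa_2>\tfrac12$, both geometric series in $n$ converge, so $(A^{\pi_n}_{s,t})_n$ is Cauchy in $L^p(\Omega)$; I would call its limit $\mathcal{A}_{s,t}$ and set $\mathcal{A}_t:=\mathcal{A}_{0,t}$, which is $\F_t$-adapted as an $L^p$-limit of $\F_t$-measurable random variables. Telescoping the two displayed bounds and summing yields the estimate $\|\mathcal{A}_{s,t}-A_{s,t}\|_{L^p(\Omega)} \leq c(C_1|t-s|^{\varkappa_1}+C_2|t-s|^{\varkappa_2})$, and since $\E[\eta_{u,v}\mid \F_s]=0$ whenever $s\leq u$, the centred terms drop out under $\E[\,\cdot\mid \F_s]$ and one obtains the sharper conditional bound $\|\E[\mathcal{A}_{s,t}-A_{s,t}\mid\F_s]\|_{L^p(\Omega)}\leq cC_1|t-s|^{\varkappa_1}$. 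To pass from dyadic partitions to an arbitrary $\mathcal{P}_{s,t}$ I would compare $A^{\mathcal{P}_{s,t}}_{s,t}$ to a common dyadic refinement and repeat the same two-scale estimate with the geometric factors replaced by $|\mathcal{P}_{s,t}|^{\varkappa_1-1}+|\mathcal{P}_{s,t}|^{\varkappa_2-1/2}$, both vanishing with the mesh. Uniqueness then follows at once: the difference of two candidate processes is additive, adapted, starts at zero, and inherits the same control, hence is forced to be identically zero by the Riemann-sum characterisation.

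The main obstacle will be the centred-fluctuation estimate: one must verify carefully that the increments $\eta_{u,v}$ at a given dyadic level form a martingale difference sequence with respect to the filtration indexed by the partition endpoints---this uses both the adaptedness assumption $A_{s,t}\in \F_t$ and the strict left-to-right ordering of intervals---and then apply the $L^p$-BDG inequality with the right exponent counting. This orthogonality is exactly what replaces the deterministic sewing threshold $\varkappa>1$ by the weaker probabilistic threshold $\varkappa_2>\tfrac12$ in the second hypothesis, and it is the only point at which the probabilistic structure genuinely enters the proof.
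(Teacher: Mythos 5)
The paper does not prove this lemma itself: it is quoted verbatim from the cited reference (L\^{e}'s stochastic sewing paper), so there is no in-paper argument to compare against. Your reconstruction is the standard proof from that reference and is correct in all essentials: the dyadic telescoping identity, the split of $\delta_m A_{u,v}$ into its $\F_u$-conditional mean (controlled by $\varkappa_1>1$ via the triangle inequality) and a martingale-difference remainder (controlled by $\varkappa_2>\tfrac12$ via the discrete Burkholder--Davis--Gundy inequality and Minkowski in $L^{p/2}$), and the observation that the centred terms vanish under $\E[\,\cdot\mid\F_s]$, which yields the sharper conditional bound. The only points you compress are the verification that the two-parameter limits are additive, so that $\mathcal{A}_{s,t}$ really equals $\mathcal{A}_{0,t}-\mathcal{A}_{0,s}$ for the adapted one-parameter process you define, and the passage from dyadic to arbitrary partitions; both follow from the general-partition estimate you sketch, but they need to be carried out (e.g.\ by comparing an arbitrary partition with a common refinement and removing points one at a time) rather than asserted.
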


Below, we state a consequence of the stochastic sewing lemma that allows us to obtain bounds on the increments of the characteristic function of the occupation measure. The latter is an abstract version of the arguments given in the proofs of \cite{MR4488556, MR4342752}.

\begin{Proposition}\label{prop: sewing local time}
 Fix $T > 0$ and let $(\Omega, \F, (\F_t)_{t \in [0,T]}, \P)$ be a stochastic basis with the usual conditions. Let $X: \Omega \times [0,T] \longrightarrow \R^d$ and $w: \Omega \times [0,T] \longrightarrow \R$ be progressively measurable such that
 \begin{align}\label{eq: rho integral bound}
      \int_s^t \| w_{\tau} \|_{L^p(\Omega)}\, \mathrm{d}\tau \lesssim C(t-s)^{\kappa}
 \end{align}
 holds for some $p \geq 2$ and $\kappa > 1/2$. Define for $f: \R^d \longrightarrow \R$ is continuous and bounded the functional
 \[
     \varphi_t = \int_0^t w_{\tau}f(X_{\tau})\, \mathrm{d}\tau.
 \]
 If there exists $\Gamma > 0$ such that
 \begin{align}\label{eq: conditional expectation}
      \left\| \int_s^t \E\left[ w_{\tau}f(X_{\tau})\ | \ \mathcal{F}_{s}\right]\, \mathrm{d}\tau \right\|_{L^p(\Omega)} \leq \Gamma (t-s)^{\kappa},
 \end{align}
 then there exists $c > 0$ {depending only on $d, p, \kappa$} such that
 \[
      \| \varphi_t - \varphi_s \|_{L^p(\Omega)} \leq c\Gamma (t-s)^{\kappa}, \qquad (s,t) \in \Delta_T^{2}
 \]
\end{Proposition}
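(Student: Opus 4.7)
The plan is to apply the stochastic sewing lemma (Lemma \ref{lemma: swl}) to the germ
\[
 A_{s,t} := \E\!\left[\int_s^t w_\tau f(X_\tau)\,\mathrm{d}\tau \,\Big|\, \mathcal{F}_s\right] = \int_s^t \E[w_\tau f(X_\tau)\mid\mathcal{F}_s]\,\mathrm{d}\tau,
\]
the equality being Fubini's theorem, justified by the bound \eqref{eq: rho integral bound} and the boundedness of $f$. Hypothesis \eqref{eq: conditional expectation} then reads $\|A_{s,t}\|_{L^p(\Omega)} \leq \Gamma(t-s)^\kappa$. Since $A_{s,t}$ is $\mathcal{F}_s$-measurable, hence $\mathcal{F}_t$-adapted, and $A_{s,s}=0$, it is an admissible germ.

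Next I would verify the two sewing hypotheses. Write $I_{r,t} := \int_r^t w_\tau f(X_\tau)\,\mathrm{d}\tau$, so that $\delta_r A_{s,t} = \E[I_{r,t}\mid\mathcal{F}_s] - \E[I_{r,t}\mid\mathcal{F}_r]$. Conditioning on $\mathcal{F}_s$ and using the tower property annihilates both terms, giving $\E[\delta_r A_{s,t}\mid\mathcal{F}_s]=0$, so the first sewing bound holds with $C_1=0$ and any $\varkappa_1>1$. For the second, hypothesis \eqref{eq: conditional expectation} applied on $[r,t]$ yields $\|\E[I_{r,t}\mid\mathcal{F}_r]\|_{L^p(\Omega)} \leq \Gamma(t-r)^\kappa$, and the $\mathcal{F}_s$-term is estimated by the tower identity $\E[I_{r,t}\mid\mathcal{F}_s]=\E[\E[I_{r,t}\mid\mathcal{F}_r]\mid\mathcal{F}_s]$ together with conditional Jensen. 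By the triangle inequality this gives $\|\delta_r A_{s,t}\|_{L^p(\Omega)} \leq 2\Gamma(t-r)^\kappa \leq 2\Gamma(t-s)^\kappa$, the second sewing bound with $\varkappa_2=\kappa>1/2$ and $C_2=2\Gamma$.

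Lemma \ref{lemma: swl} then produces an adapted process $\mathcal{A}$ with $\|\mathcal{A}_{s,t} - A_{s,t}\|_{L^p(\Omega)} \leq 2c\Gamma(t-s)^\kappa$ and $\sum_{[u,v]\in\mathcal{P}_{s,t}} A_{u,v} \to \mathcal{A}_{s,t}$ in $L^p(\Omega)$ as $|\mathcal{P}_{s,t}|\to 0$, where $c$ depends only on $d,p,\kappa$. The principal obstacle is the identification $\mathcal{A}_{s,t} = \varphi_t - \varphi_s$. Setting $R_{u,v} := A_{u,v} - I_{u,v}$, the family $\{R_{u,v}\}_{[u,v]\in\mathcal{P}}$ consists of $\mathcal{F}_v$-measurable random variables with $\E[R_{u,v}\mid\mathcal{F}_u] = 0$, i.e., martingale differences with respect to the skeleton filtration $(\mathcal{F}_u)_{u\in\mathcal{P}}$. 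For $p\geq 2$, the discrete Burkholder-Davis-Gundy inequality yields
\[
 \left\| \sum_{[u,v]\in\mathcal{P}} R_{u,v} \right\|_{L^p(\Omega)}^2 \lesssim_p \sum_{[u,v]\in\mathcal{P}} \|R_{u,v}\|_{L^p(\Omega)}^2,
\]
while the boundedness of $f$ together with \eqref{eq: rho integral bound} supplies $\|R_{u,v}\|_{L^p(\Omega)} \leq 2\|f\|_\infty C(v-u)^\kappa$. Using $\sum_{[u,v]}(v-u)^{2\kappa} \leq |\mathcal{P}|^{2\kappa-1}(t-s)$ and $\kappa>1/2$ forces this sum to vanish as $|\mathcal{P}|\to 0$, so $\sum A_{u,v}\to \varphi_t-\varphi_s$ in $L^p(\Omega)$. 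Uniqueness of the $L^p$-limit identifies $\mathcal{A}_{s,t}=\varphi_t-\varphi_s$, and the triangle inequality $\|\varphi_t-\varphi_s\|_{L^p(\Omega)} \leq \|\mathcal{A}_{s,t}-A_{s,t}\|_{L^p(\Omega)}+\|A_{s,t}\|_{L^p(\Omega)}$ yields the claimed bound with constant $2c+1$, depending only on $d,p,\kappa$.
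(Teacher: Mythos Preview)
Your proof is correct and follows essentially the same approach as the paper: apply the stochastic sewing lemma to the germ $A_{s,t}=\E[\varphi_{s,t}\mid\F_s]$, observe that the tower property gives $\E[\delta_r A_{s,t}\mid\F_s]=0$ so that $C_1=0$, bound $\|\delta_r A_{s,t}\|_{L^p}$ directly from hypothesis \eqref{eq: conditional expectation}, and then identify the sewn process $\mathcal{A}$ with $\varphi$. The only difference is in this last identification step: the paper verifies that $\varphi$ itself satisfies the characterising bounds of the sewing lemma (using \eqref{eq: rho integral bound}) and invokes uniqueness, whereas you show directly that the Riemann sums $\sum A_{u,v}$ converge to $\varphi_{s,t}$ via a discrete BDG argument on the martingale differences $R_{u,v}=A_{u,v}-I_{u,v}$; both routes are standard and equally short.
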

\begin{proof}
 Let $\varphi_{s,t} = \varphi_t - \varphi_s$, $(s,t) \in \Delta_T^{2}$ and define $A_{s,t} = \E[\varphi_{s,t}\ | \ \F_s]$. It is clear that $A_{s,s} = 0$ and that $A_{s,t}$ is $\F_t$-measurable. Since, for $(s,r,t) \in \Delta_{T}^3$, we have $\delta_r A_{s,t} = \E[\varphi_{r,t}\ | \ \F_s] - \E[ \varphi_{r,t}\ | \F_r]$, the tower property of conditional expectations yields $\E[\delta_r A_{s,t} \ | \ \F_s] = 0$. Moreover, by assumption \eqref{eq: conditional expectation} we obtain 
 \[
  \| \delta_{r}A_{s,t}\|_{L^p(\Omega)} \leq \|A_{s,t}\|_{L^p(\Omega)} + \|A_{r,t}\|_{L^p(\Omega)} + \|A_{s,r}\|_{L^p(\Omega)} \leq 3\Gamma|t-s|^{\varkappa}.
 \]
 Hence, the assumptions of the stochastic sewing lemma are satisfied for $C_1 = 0$, $\varkappa_1 > 1$ arbitrary, $C_2 = 3\Gamma$ and $\varkappa_2 = \varkappa$. This yields the existence of a process $\mathcal{A}: [0,T] \longrightarrow \R^d$ which satisfies $\mathcal{A}_0 = 0$, $\| \mathcal{A}_{s,t} - A_{s,t}\|_{L^p(\Omega)} \leq cC_2|t-s|^{\varkappa}$, and $\E[ \mathcal{A}_{s,t} - A_{s,t} | \F_s] = 0$. Using the triangle inequality and the $L^p$ estimate on $A_{s,t}$, we find 
 \[
  \| \mathcal{A}_{s,t}\|_{L^p(\Omega)} 
  \leq \| \mathcal{A}_{s,t} - A_{s,t}\|_{L^p(\Omega)} + \|A_{s,t}\|_{L^p(\Omega)} \leq 3c\Gamma |t-s|^{\kappa} + \Gamma|t-s|^{\kappa}.
 \]
 It remains to show that $\mathcal{A}_{s,t} = \varphi_{s,t}$. 
 Note that $\varphi$ is adapted and satisfies $\varphi_{0} = 0$. Moreover, we have $\E[\varphi_{s,t} - A_{s,t}\ | \ \mathcal{F}_s] = 0$, and using \eqref{eq: rho integral bound} we obtain
 \[
  \| \varphi_{s,t} - A_{s,t} \|_{L^p(\Omega)} \leq 2\|\varphi_{s,t}\|_{L^p(\Omega)} \leq 2C |t-s|^{\kappa}.
 \]
 Hence, by the uniqueness in the stochastic sewing lemma, we conclude that $\varphi$ and $\mathcal{A}$ are modifications of each other.
\end{proof}

{Remark that the assertion $\| \varphi_t - \varphi_s \|_{L^p(\Omega)} \leq c\Gamma (t-s)^{\kappa}$ contains the constant $\Gamma$ that stems from \eqref{eq: conditional expectation}. In this way, Proposition \ref{prop: sewing local time} provides a flexible way to deduce a-priori bounds on $\varphi_t - \varphi_s$ in $L^p(\Omega)$ in terms of bounds on its conditional expectation.}

\section{Volterra It\^{o} processes}
\label{sec: VIt\^{o}}

\subsection{Regularity of weighted occupation measures}

In this section, we study the regularity of the local time for the Volterra-It\^{o} process 
\begin{align}\label{eq: VIt\^{o}}
 X_t &= g(t) + \int_0^t K_b(t,s)b_s\, \mathrm{d}s + \int_0^t K_{\sigma}(t,s)\sigma_s\, \mathrm{d}B_s,
\end{align}
where $g: \Omega \times [0,T] \longrightarrow \R^d$ is $\F_0 \otimes \mathcal{B}([0,T])$-measurable, $b: \Omega \times [0,T] \longrightarrow \R^n$ and $\sigma: \Omega \times [0,T] \longrightarrow \R^{k  \times m}$ are progressively measurable, $K_b: \Delta_T^2 \longrightarrow \R^{d \times n}$ and $K_{\sigma}: \Delta_T^2 \longrightarrow \R^{d \times k}$ are measurable such that $\P$-a.s.
\[
 \int_0^t |K_b(t,s)b_s|\, \mathrm{d}s + \int_0^t |K_{\sigma}(t,s)\sigma_s|^2\, \mathrm{d}s < \infty \qquad t \in (0,T].
\]
Then $X$ is well-defined and progressively measurable. Under slight additional conditions, one may also obtain $L^p(\Omega,\P)$-bounds on $X$ and verify that $X-g$ has sample paths in $L_{loc}^q(\R_+)$ for some $q \in [1,\infty]$ or even continuous sample paths. Since we do not need such, we omit the precise conditions, see \cite[Lemma 3.1]{PROMEL2023291} for related arguments.
Let us first introduce the main conditions imposed on the Volterra kernels $K_b, K_{\sigma}$ and coefficients $b,\sigma$:

\begin{enumerate}
    \item[(A1)] {There exists a constant $C_T > 0$} and $\gamma_b, \gamma_{\sigma} \geq 0$ such that for all $(s,t) \in \Delta_T^2$
    \[
     \int_s^t |K_b(t,r)|\, \mathrm{d}r \leq C_T (t-s)^{\gamma_b},
     \ \ \int_s^t |K_{\sigma}(t,r)|^2\, \mathrm{d}r \leq C_T (t-s)^{2\gamma_{\sigma}}
    \]

    \item[(A2)] {There exists a constant $C_T > 0$}, $p \in [2,\infty)$, and $\alpha_b, \alpha_{\sigma} \geq 0$ such that for all $(s,t) \in \Delta_T^2$
    \[
        \|b_t - \E[b_t \ | \F_s] \|_{L^p(\Omega)} \leq C_T(t-s)^{\alpha_b} \ \text{ and } \ \|\sigma_t - \sigma_s\|_{L^p(\Omega)} \leq C_T(t-s)^{\alpha_{\sigma}}.
    \]
    
    \item[(A3)] (Local non-determinism) There exists $H>0$, $C_T > 0$ and {$\rho: \Omega \times [0,T] \longrightarrow [0,1]$} progressively measurable such that for all $(s,t) \in \Delta_T^2$ and $|\xi| = 1$
    \[
        \int_s^t |\sigma_s^{\top}K_{\sigma}(t,r)^{\top}\xi|^2 \mathrm{d}r \geq C_T (t-s)^{2H}\rho_s^2 \ \ \text{ a.s.}
    \]
\end{enumerate}

Condition (A1) is a mild first (respectively second) order moment condition that is used to obtain bounds on the H\"older increments of the process in $L^p(\Omega)$. Since we may always bound $\|b_t - \E[b_t \ | \F_s] \|_{L^p(\Omega)} \leq 2 \|b_t - b_s\|_{L^p(\Omega)}$, condition (A2) can be verified from a mild condition on the increments of the processes $b,\sigma$. However, if $b$ is $\F_0$-measurable (e.g. constant), then $\|b_t - \E[b_t\ | \ \F_s]\|_{L^p(\Omega)} = 0$ and we may pick $\alpha_b > 0$ arbitrarily. Condition (A2) allows to construct a local approximation in the spirit of \cite{MR2668905, FJ2022}, see also \cite{MR3885546} for a general discussion of such approximations and \cite{MR3022725, MR4127334, MR4255174} for applications towards stochastic equations with jumps. Finally, condition (A3) extends the local non-determinism conditions recently used in \cite{MR4488556, MR4342752}. The newly introduced process $(\rho_t)_{t \in [0, T]}$ allows for a flexible treatment of Volterra It\^{o}-diffusions where the diffusion coefficient is not uniformly non-degenerate, as illustrated in the following remark. 

\begin{Remark}\label{remark: nondeterminism}
 If $K_{\sigma}$ satisfies the local non-determinism condition
\begin{align}\label{eq: local nondeterminism K}
 \inf_{t \in (0,T]}\inf_{s \in (0,t]}\inf_{|\xi| = 1}(t-s)^{-2H}\int_s^t |K_{\sigma}(t,r)^{\top}\xi|^2 \mathrm{d}r > 0,
\end{align}
then condition (A3) holds for $\rho_t^2 = 1 \wedge \lambda_{\mathrm{min}}(\sigma_t \sigma_t^{\top})$, where $\lambda_{\min}$ denotes the smallest eigenvalue of a symmetric positive semidefinite matrix.
\end{Remark}

Note that if (A3) holds for $\rho_t$ then it also holds for $\widetilde{\rho}_t$ provided that $\rho_t \geq \widetilde{\rho}_t$. Hence, the restriction $\rho_t \in [0,1]$ is not essential. The following example collects kernels that satisfy \eqref{eq: local nondeterminism K}.

\begin{Example}\label{example: regularizing}
    The following Volterra kernels satisfy condition \eqref{eq: local nondeterminism K}. 
    \begin{enumerate}
        \item[(a)] The Riemann-Liouville fractional kernel is given by 
        \[
        K(t,s) = \frac{(t-s)^{H-\frac{1}{2}}}{\Gamma(H+\frac{1}{2})}\1_{\{s\leq t\}}.
        \]
        with $H > 0$. The process $X_t = \int_0^t K(t,s)\, \mathrm{d}B_s$ is then called Riemann-Liouville fractional Brownian motion.
        
        \item[(b)] The fractional Brownian motion with Hurst index $H \in (0,1)$ is given by $B^H_t = \int_0^t K(t,s)\, \mathrm{d}B_s$ with the kernel
        \[
            K(t,s) = \frac{(t-s)^{H-\frac{1}{2}}}{\Gamma(H + \frac{1}{2})}F_{2,1}\left(H - \frac{1}{2}; H - \frac{1}{2}; H + \frac{1}{2}; 1 - \frac{t}{s}\right) 
        \]
        where $F_{2,1}$ denotes the Hypergeometric function.
        
        \item[(c)] The $\log$-fractional kernel 
        \[
            K(t,s) = \log\left(1 + \frac{1}{t-s}\right)\1_{\{s \leq t\}}
        \]
        satisfies \eqref{eq: local nondeterminism K} for $H = 1/2$.
    \end{enumerate}
\end{Example}

To take into account the possible degeneracy of the diffusion coefficient, we study the \textit{weighted occupation measure} defined in \eqref{eq: weighted measures}. The following is our first main result.

\begin{Theorem}\label{thm: VIto}
 {Suppose that conditions (A1) -- (A3) are satisfied with $p \in [2,\infty)$ given by condition (A2). Suppose that there exists $\chi \in [0,1]$ and a constant $C_T > 0$ such that 
 \begin{align}\label{eq: chi}
   \| \rho_t - \rho_s \|_{L^p(\Omega)} \leq C_T (t-s)^{\chi}, \qquad (s,t) \in \Delta_T^2.
 \end{align}
 Define $\zeta = \min\left\{ \alpha_b + \gamma_b,\ \alpha_{\sigma} + \gamma_{\sigma} \right\}$. Assume $H < \zeta$, there exist $\eta \in (0,1/2)$ and $\delta \in [0, \eta/H]$ such that
 \begin{align}\label{eq: weight function bound}
    \sup_{t \in [0,T]}\E\left[ \rho_t^{p\left( \delta - \frac{\eta}{H}\right)} \right] < \infty,
 \end{align}
  and define the regularity index 
 \begin{align}\label{eq: kappa regularity}
     \kappa_*(\eta) = \frac{1}{\zeta + \eta} \min\left\{ (1\wedge \delta)\chi \left( 1 + \frac{\eta}{H}\right),\ \eta \left(\frac{\zeta}{H} - 1\right) \right\}.
 \end{align}
 Then for each $q \in [1, (1-\eta)p)$, $\e \in (0, 1- \eta - q/p)$, and $\kappa < \kappa_*(\eta) - d/q$ the weighted occupation measure satisfies 
    \[
    \ell^{\delta} \in L^{\frac{p}{q}}(\Omega, \P; C^{1 - \eta - \frac{q}{p} - \e}([0,T]; \mathcal{F}L_q^{\kappa}(\R^d))).
    \] }
\end{Theorem}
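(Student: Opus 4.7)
The plan is to work pointwise in the Fourier variable $\xi \in \R^d$ with the functional
\[
    \varphi_t(\xi) := \widehat{\ell^\delta_t}(\xi) = \int_0^t \rho_r^\delta e^{-\i\xi\cdot X_r}\, \mathrm{d}r,
\]
apply Proposition \ref{prop: sewing local time} with $w_\tau = \rho_\tau^\delta$ and $f(x) = e^{-\i\xi\cdot x}$ to derive an $L^p(\Omega)$-estimate of the increment $\varphi_{s,t}(\xi)$ with explicit $|\xi|$-decay, integrate in $\xi$ against $\langle\xi\rangle^{q\kappa}$ to convert the pointwise estimate into a $\mathcal{F}L_q^\kappa$-valued bound, and finally invoke the Kolmogorov--Chentsov criterion in the target Banach space to upgrade to H\"older regularity in time.

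The key analytic step is to bound
\[
    I_{s,t}(\xi) := \int_s^t\E\bigl[\rho_\tau^\delta e^{-\i\xi\cdot X_\tau}\,\big|\,\F_s\bigr]\,\mathrm{d}\tau
\]
in $L^p(\Omega)$. I would perform a one-step Euler approximation of $X_\tau$ around time $s$, decomposing $X_\tau = Y_{s,\tau} + M_{s,\tau} + R_{s,\tau}$, where $Y_{s,\tau}$ collects the $\F_s$-measurable terms (the initial value and the stochastic integrals over $[0,s]$), the main part
\[
    M_{s,\tau} := \int_s^\tau K_\sigma(\tau,r)\sigma_s\,\mathrm{d}B_r + \int_s^\tau K_b(\tau,r)\E[b_r\mid\F_s]\,\mathrm{d}r
\]
is, conditionally on $\F_s$, centred Gaussian with variance along $\xi$ bounded below by $C_T(\tau-s)^{2H}\rho_s^2|\xi|^2$ thanks to (A3), and $R_{s,\tau}$ is the Euler remainder. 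By (A1)--(A2) and the BDG inequality one has $\|R_{s,\tau}\|_{L^p(\Omega)} \lesssim (\tau-s)^\zeta$. The conditional characteristic function then satisfies
\[
    \bigl|\E[e^{-\i\xi\cdot M_{s,\tau}}\mid\F_s]\bigr| \leq \exp\Bigl(-\tfrac{1}{2}C_T(\tau-s)^{2H}\rho_s^2|\xi|^2\Bigr),
\]
while the remainder is handled by $|e^{-\i\xi\cdot R_{s,\tau}}-1| \leq 2|\xi\cdot R_{s,\tau}|^\theta$ for any $\theta \in [0,1]$.

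Combining the Gaussian decay with the Euler error, the polynomial envelope $e^{-z} \leq C_\beta z^{-\beta}$ for $z > 0$, and the H\"older control on $\rho$ from \eqref{eq: chi} via $|\rho_\tau^\delta - \rho_s^\delta| \leq C|\rho_\tau - \rho_s|^{1\wedge\delta}$ on $[0,1]$, one obtains an estimate of the form $\|I_{s,t}(\xi)\|_{L^p(\Omega)} \leq \Gamma(\xi)(t-s)^{1-\eta}$ with $\Gamma(\xi) \lesssim \langle\xi\rangle^{-\kappa_*(\eta)}$. The exponent $\kappa_*(\eta)$ of \eqref{eq: kappa regularity} arises from balancing the two competing error sources---the $\rho$-H\"older error, governed by $\chi(1\wedge\delta)$, and the Euler error, governed by $\eta(\zeta/H-1)$---while the polynomial interpolation of the Gaussian factor (with exponent tuned to $\eta/H$) creates a prefactor $\rho_s^{\delta-\eta/H}$ whose $L^p(\Omega)$-norm is finite exactly by \eqref{eq: weight function bound}. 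Since $\eta < 1/2$ the sewing exponent $1-\eta$ exceeds $1/2$ and meets the hypothesis of Proposition \ref{prop: sewing local time} (its a priori bound \eqref{eq: rho integral bound} being trivial as $\rho \leq 1$), yielding $\|\widehat{\ell^\delta_{s,t}}(\xi)\|_{L^p(\Omega)} \lesssim \langle\xi\rangle^{-\kappa_*(\eta)}(t-s)^{1-\eta}$.

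For the final step, Minkowski's inequality (valid since $q \leq p$) combined with integration in $\xi$ against $\langle\xi\rangle^{q\kappa}$ for $\kappa < \kappa_*(\eta) - d/q$ yields $\bigl\|\, \|\ell^\delta_{s,t}\|_{\mathcal{F}L_q^\kappa}\, \bigr\|_{L^p(\Omega)} \lesssim (t-s)^{1-\eta}$, and applying Kolmogorov--Chentsov in the Banach space $\mathcal{F}L_q^\kappa(\R^d)$ with moment exponent $p/q$ provides a modification of $\ell^\delta$ in $L^{p/q}(\Omega; C^{1-\eta-q/p-\e}([0,T]; \mathcal{F}L_q^\kappa(\R^d)))$ for every $\e \in (0, 1-\eta-q/p)$, which is the claim. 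I expect the principal technical obstacle to be the careful bookkeeping that produces exactly the two terms inside the minimum of $\kappa_*(\eta)$: one must simultaneously choose the linearisation parameter $\theta$, the polynomial exponent $\beta$ in $e^{-z}\leq z^{-\beta}$, and the treatment of the $\rho$-weight so that the singularity introduced by the Gaussian interpolation is absorbed by \eqref{eq: weight function bound} while the $|\xi|$-decay is maximised.
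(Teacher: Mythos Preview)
Your overall architecture is exactly that of the paper: obtain a pointwise bound $\|\widehat{\ell^\delta_{s,t}}(\xi)\|_{L^p(\Omega)}\lesssim\langle\xi\rangle^{-\kappa_*(\eta)}(t-s)^{1-\eta}$ via Proposition~\ref{prop: sewing local time} applied to the conditional expectation, then Minkowski and Kolmogorov--Chentsov. The Euler-type approximation, the conditional Gaussian structure from (A3), and the use of \eqref{eq: weight function bound} to absorb the $\rho^{\delta-\eta/H}$ factor are all the right ingredients.

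There is, however, a genuine gap in your implementation: freezing the coefficients at time $s$ does \emph{not} produce any $|\xi|$-decay in the error terms. With your decomposition, the remainder contributes $|e^{-\i\xi\cdot R_{s,\tau}}-1|\leq |\xi|^{\theta}\|R_{s,\tau}\|^{\theta}\lesssim |\xi|^{\theta}(\tau-s)^{\theta\zeta}$ and the $\rho$-H\"older error contributes $(\tau-s)^{(1\wedge\delta)\chi}$; after integrating over $\tau\in[s,t]$ these become $|\xi|^{\theta}(t-s)^{1+\theta\zeta}$ and $(t-s)^{1+(1\wedge\delta)\chi}$, neither of which decays in $|\xi|$ for any $\theta\in[0,1]$. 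To pass through Proposition~\ref{prop: sewing local time} you must take $\Gamma$ independent of $(s,t)$, which forces $\Gamma(\xi)\gtrsim |\xi|^{\theta}T^{\eta+\theta\zeta}$ and destroys the argument. No choice of $\theta$ or of the polynomial exponent $\beta$ rescues this; the Gaussian factor attaches only to the main term, not to the remainder, because $R_{s,\tau}$ is not $\F_s$-measurable and is correlated with $M_{s,\tau}$.

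The paper's remedy is to make the freezing time $\xi$-dependent: for each $\tau\in(s,t]$ one freezes at $\tau-\e(\tau,s,\xi)$ with $\e(\tau,s,\xi)=\tfrac{\tau-s}{2}|\xi|^{-a}$ for a parameter $a\in(0,1/H)$. Then the conditional variance along $\xi$ is $\gtrsim\rho_{\tau-\e}^{2}(\tau-s)^{2H}|\xi|^{2-2Ha}$, the Euler error becomes $|\xi|\cdot\e^{\zeta}\sim(\tau-s)^{\zeta}|\xi|^{1-a\zeta}$, and the $\rho$-error becomes $\e^{(1\wedge\delta)\chi}\sim(\tau-s)^{(1\wedge\delta)\chi}|\xi|^{-a(1\wedge\delta)\chi}$. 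Now \emph{all three} pieces decay in $|\xi|$ provided $a\zeta>1$, i.e.\ $a\in(1/\zeta,1/H)$, which is nonempty precisely because $\zeta>H$. The specific choice $a=\frac{1+\eta/H}{\zeta+\eta}$ equalises the Gaussian and Euler exponents and produces exactly the two terms in the minimum defining $\kappa_*(\eta)$. This $\xi$-dependent localisation is the missing idea in your proposal; once you insert it, the rest of your sketch goes through verbatim.
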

\begin{proof} {
\textit{Step 1.} Note that the Fourier transform of $\ell_{s,t}^{\delta} := \ell^{\delta}_t - \ell_s^{\delta}$ is given by 
\begin{align}\label{eq: FT}
  \widehat{\ell}^{\delta}_{s,t}(\xi) = \int_s^t \rho^{\delta}_r\ \mathrm{e}^{\mathrm{i}\langle \xi, X_r\rangle}\, \mathrm{d}r, \qquad \xi \in \R^d,
 \end{align}
where $(s,t) \in \Delta_T^2$. It suffices to prove the existence of a constant $C_T > 0$ such that 
\begin{align}\label{eq: Lp bound FT}
    \left\| \widehat{\ell}_{s,t}^{\delta}(\xi) \right\|_{L^p(\Omega)} \leq C_T (1+|\xi|)^{- \kappa_*(\eta)}(t-s)^{1- \eta}, \qquad \xi \in \R^d.
\end{align}
Indeed, using this bound combined with the Minkowski inequality, we obtain 
 \begin{align*}
  \| \| \ell^{\delta}_{s,t} \|_{\mathcal{F}L_q^{\kappa}} \|_{L^{\frac{p}{q}}(\Omega)}
  &= \left\| \left( \int_{\R^d} (1+|\xi|^2)^{\frac{\kappa q}{2}}|\widehat{\ell}^{\delta}_{s,t}(\xi)|^q\,  \mathrm{d}\xi\right)^{\frac{1}{q}} \right\|_{L^{\frac{p}{q}}(\Omega)}
  \\ &\leq \left( \int_{\R^d} (1+|\xi|^2)^{\frac{\kappa q}{2}} \| \widehat{\ell}^{\delta}_{s,t}(\xi) \|_{L^{\frac{p}{q} q}(\Omega)}^q\, \mathrm{d}\xi \right)^{\frac{1}{q}}
  \\ &\lesssim \left(\int_{\R^d} (1+|\xi|)^{\kappa q - \kappa_*(\eta)q}\, \mathrm{d}\xi \right)^{\frac{1}{q}} (t-s)^{1-\eta}
 \end{align*}
 Noting that the integrals are convergent since $\kappa q - \kappa_*(\eta)q + d < 0$, we obtain
 \[
 \| \| \ell^{\delta}_{s,t} \|_{\mathcal{F}L_q^{\kappa}} \|_{L^{\frac{p}{q}}(\Omega)} \lesssim (t-s)^{1-\eta}.
 \]
 Since $(1-\eta)\frac{p}{q} > 1$ by assumption, the Kolmogorov-Chentsov theorem implies the desired regularity of the weighted occupation measure.  }

{
To verify the bound \eqref{eq: Lp bound FT} for all $\xi \in \R^d$, let us fix $\xi_* > 0$ to be determined later on. When $|\xi| \leq \xi_*$, we trivially obtain $\|\widehat{\ell}^{\delta}_{s,t}(\xi)\|_{L^p(\Omega)} \leq t-s \lesssim (t-s)^{1-\eta}(1+|\xi|)^{-\kappa_*(\eta)}$ since $\rho_t \leq 1$ by assumption. Hence, it suffices to prove \eqref{eq: Lp bound FT} for $|\xi| \geq \xi_*$. For this purpose, we show that its conditional expectation satisfies the bound
\begin{align}\label{eq: 4}
    \left\| \int_s^t \E\left[ \rho_r^{\delta} e^{\mathrm{i}\langle \xi, X_r\rangle} \ | \ \mathcal{F}_s \right]\, \mathrm{d}r \right\|_{L^p(\Omega)} \leq C_T (1+|\xi|)^{- \kappa_*(\eta)}(t-s)^{1- \eta}.
\end{align}
for some constant $C_T > 0$. Then an application of Proposition \ref{prop: sewing local time} for $\varphi_t = \widehat{\ell}^{\delta}_t(\xi)$ readily yields \eqref{eq: Lp bound FT} and hence the assertion. The remaining steps are dedicated to the proof of \eqref{eq: 4}.}

\textit{Step 2.} Let us first construct an approximation of $X_t$ that allows us to extract (locally) the non-deterministic behaviour of the process from the noise. For $t \in (0,T]$, $s \in [0,T]$, and $\e \in (0,1\wedge t)$ let us define the processes
\begin{align*}
    b_s^{\e,t} = \1_{[0,t-\e)}(s)b_s + \1_{[t-\e,t]}(s)\E[b_s \ | \ \F_{t-\e}] \ \text{ and } \ \sigma_s^{\e,t} = \1_{[0,t-\e)}(s)\sigma_s + \1_{[t-\e,t]}(s)\sigma_{t-\e}.
\end{align*}
Using this approximation, let $X_t^{\e,t}$ be given by
\begin{align}\label{eq: local approximation VIt\^{o}}
 X_t^{\e,t} = g(t) + \int_0^t K_b(t,s)b_s^{\e,t}\, \mathrm{d}s + \int_0^t K_{\sigma}(t,s)\sigma_s^{\e,t}\, \mathrm{d}B_s.
\end{align}
Below we show that there exists a constant $C_T > 0$ such that for $t \in (0,T]$ and $\e \in (0,1\wedge t)$ one has  {
\begin{align}\label{prop: local approximation VIto}
    \| X_t - X_t^{\e,t}\|_{L^p(\Omega)} \leq C_T \e^{\zeta}.
\end{align} }
Indeed, by definition of $X_t^{\e,t}$ we obtain
\begin{align*}
     X_t^{\e,t} &= \int_0^{t-\e}K_b(t,s)b_s\, \mathrm{d}s + \int_0^{t-\e}K_{\sigma}(t,s)\sigma_s\, \mathrm{d}B_s
     \\ &\qquad + \int_{t-\e}^t K_b(t,s)b^{\e,t}_{s}\, \mathrm{d}s + \int_{t-\e}^t K_{\sigma}(t,s)\sigma^{\e,t}_{s}\, \mathrm{d}B_s,
\end{align*}
and hence arrive at  
\begin{align*}
        X_t - X_t^{\e,t} &= \int_{t-\e}^t K_b(t,s)(b_s - b^{\e,t}_s)\, \mathrm{d}s + \int_{t-\e}^t K_{\sigma}(t,s)(\sigma_s - \sigma^{\e,t}_s)\, \mathrm{d}B_s.
\end{align*}
By direct computation we find a constant $C_T > 0$ such that {$\|b_s - b_s^{\e,t}\|_{L^p(\Omega)} \leq C_T \e^{\alpha_b}$ holds for all $(s,t) \in \Delta_T^2$}. Hence, using conditions (A1) and (A2) we find
\begin{align*}
    \left\| \int_{t-\e}^t K_b(t,s)(b_s - b^{\e,t}_s)\, \mathrm{d}s\right\|_{L^p(\Omega)}
    &\leq \int_{t-\e}^t |K_b(t,s)|\|b_s - b^{\e,t}_s\|_{L^p(\Omega)}\, \mathrm{d}s
    \\ &\leq C_T \e^{\alpha_b} \int_{t-\e}^t |K_b(t,s)|\, \mathrm{d}s
    \lesssim_T \e^{\alpha_b + \gamma_b},
\end{align*}
{where the constants are independent of $\varepsilon$ and uniform on $[0,T]$.} Likewise, we find another constant $C_T' > 0$ such that $\|\sigma_s - \sigma_s^{\e,t}\|_{L^p(\Omega)} \leq C'_T \e^{\alpha_{\sigma}}$ holds uniformly in $(s,t) \in \Delta_T^2$. Hence, using Jensen's inequality, we obtain
\begin{align*}
        &\ \left\| \int_{t-\e}^t K_{\sigma}(t,s)(\sigma_s - \sigma^{\e,t}_s)\, \mathrm{d}B_s \right\|_{L^p(\Omega)}^p
        \\ &\lesssim \E\left[ \left( \int_{t-\e}^t |K_{\sigma}(t,s)|^2 |\sigma_s - \sigma^{\e,t}_{s}|^2\,  \mathrm{d}s\right)^{\frac{p}{2}} \right]
        \\ &\lesssim \left( \int_{t-\e}^t |K_{\sigma}(t,s)|^2\, \mathrm{d}s\right)^{\frac{p}{2} - 1} \int_{t-\e}^t |K_{\sigma}(t,s)|^2 \| \sigma_s - \sigma^{\e,t}_{s}\|_{L^p(\Omega)}^p\, \mathrm{d}s
        \\ &\leq C_T' \e^{p \alpha_{\sigma}}\left( \int_{t-\e}^t |K_{\sigma}(t,s)|^2\, \mathrm{d}s\right)^{\frac{p}{2}}
        \lesssim_T \e^{p \alpha_{\sigma} + p \gamma_{\sigma}}.
\end{align*}
This proves \eqref{prop: local approximation VIto} and completes step 2.

\textit{Step 3.} Let $a > 0$ be such that $0 < a < 1/H$. Its precise value will be specified in step 4. Furthermore, for $(s,t) \in \Delta_{T}^2$ and $\xi \in \R^d$ let 
\begin{align}\label{eq: u lower bound}
 \e(t,s,\xi) = \frac{t-s}{2|\xi|^a} \ \text{ and } \ \xi_* = \max\left\{1,\ \left(\frac{T}{2}\right)^{\frac{1}{a}}\right\}.
\end{align}
If $|\xi| \geq \xi_*$, then $\e(t,s,\xi) < 1\wedge t$ {due to the particular choice of $\xi_*$,} and hence the approximation $X_t^{\e(t,s,\xi),t}$ from \eqref{eq: local approximation VIt\^{o}} is well-defined with the convention that $X_t^{\e=0,t} = X_t$. For $x \in \R^d$ let $f_{\xi}(x) = \mathrm{e}^{\mathrm{i}\langle \xi, x\rangle}$ and define
\begin{align*}
 \mathbb{A}_{s,t}(\xi) = \int_s^t \E\left[ \rho^{\delta}_{r - \e(r,s,\xi)} f_{\xi}\left(X^{\e(r,s,{\xi}),r}_{r} \right) \ | \ \F_s\right]\, \mathrm{d}r, \qquad |\xi| \geq \xi_*.
\end{align*}
 Let us prove that for $a \in (0, \frac{1}{H})$ and all $(s,t) \in \Delta_{T}^2$ with $s < t$ and $|\xi| \geq \xi_*$, we have 
 \begin{align}\label{lemma: Abb estimate VIt\^{o}}
  \|\mathbb{A}_{s,t}(\xi)\|_{L^p(\Omega)} \lesssim |\xi|^{-\frac{\eta}{H}(1 - Ha)} (t-s)^{1-\eta}.
 \end{align}
 Indeed, let $r \in (s,t]$. The particular form of $\e(r,s,\xi)$ yields $r - \e(r,s,\xi) > s$ and using the definition of the local approximation we obtain $X_r^{\e(r,s,\xi),r} = U_r^{\e(r,s,\xi)} + V_r^{\e(r,s,\xi)}$ where 
 \begin{align}\label{eq: U approximation}
        U_r^{\e(r,s,\xi)} &= g(r) + \int_0^{r-\e(r,s,\xi)} K_b(r,\tau)b_{\tau}\, \mathrm{d}\tau 
        \\ \notag &\qquad + \int_0^{r-\e(r,s,\xi)} K_{\sigma}(r,\tau)\sigma_{\tau}\, \mathrm{d}B_{\tau} + \int_{r-\e(r,s,\xi)}^r K_b(r,\tau)\E[b_{\tau} \ | \ \F_{r - \e(r,s,\xi)}] \, \mathrm{d}\tau
 \end{align}
 is $\F_{r - \e(r,s,\xi)}$-measurable and 
 \begin{align}\label{eq: V approximation}
    V_r^{\e(r,s,\xi)} = \int_{r-\e(r,s,\xi)}^r K_{\sigma}(r,\tau)\sigma_{r-\e(r,s,\xi)}\, \mathrm{d}B_{\tau}
 \end{align}
 is conditionally on $\F_{r-\e(r,s,\xi)}$ {a centered Gaussian random variable} with variance 
 \begin{align*}
     \mathrm{var}(V_r^{\e(r,s,\xi)}\ | \ \F_{r-\e(r,s,\xi)}) = \int_{r-\e(r,s,\xi)}^r K_{\sigma}(r,\tau)\sigma_{r-\e(r,s,\xi)}\left(\sigma_{r-\e(r,s,\xi)}\right)^{\top}K_{\sigma}(r,\tau)^{\top}\, \mathrm{d}\tau.
 \end{align*}
 In particular, for each $\xi \in \R^d$ we obtain from the local non-determinism condition (A3)
 \begin{align} \notag
        \langle \xi, \mathrm{var}(V_r^{\e(r,s,\xi)}|\F_{r-\e(r,s,\xi)}) \xi\rangle &= \int_{r - \e(r,s,\xi)}^r \left|\left(\sigma_{r-\e(r,s,\xi)}\right)^{\top}K_{\sigma}(r,\tau)^{\top}\xi \right|^2 \mathrm{d}\tau
        \\ &\gtrsim \rho_{r - \e(r,s,\xi)}^2 \e(r,s,\xi)^{2H}|\xi|^2 \label{eq: variance bound}
        \\ &= 2^{-2H} \rho_{r - \e(r,s,\xi)}^2 (r-s)^{2H}|\xi|^{2 - 2Ha}, \notag
 \end{align}
 {with a constant that is uniform in $r,s$ by assumption (A3).} Since $r - \e(r,s,\xi) > s$, we may condition on $\F_{r - \e(r,s,\xi)}$ which gives 
 \begin{align*}
        &\ \E[ \rho^{\delta}_{r - \e(r,s,\xi)} f_{\xi}(X_r^{\e(r,s,\xi),r}) \ | \ \F_s ]
        \\ &\qquad \qquad = \E\left[ \rho^{\delta}_{r - \e(r,s,\xi)}\mathrm{e}^{\mathrm{i}\langle \xi, U_r^{\e(r,s,\xi)} \rangle} \mathrm{e}^{-\frac{1}{2}\langle \xi, \mathrm{var}(V_r^{\e(r,s,\xi)}|\F_{r-\e(r,s,\xi)})\xi \rangle} \ | \ \F_s\right]
 \end{align*}
 where we have used the tower property, that $U_{r}^{\e(r,s,\xi)}$ is $\F_{r - \e(r,s,\xi)}$-measurable, and that $V_{r}^{\e(r,s,\xi)}$ is Gaussian conditionally on $\F_{r - \e(r,s,\xi)}$. Using the triangle inequality for conditional expectations, we obtain 
 \begin{align*} \notag
        &\ \left| \E[ \rho^{\delta}_{r - \e(r,s,\xi)}f_{\xi}(X_r^{\e(r,s,\xi),r}) \ | \ \F_s ]\right|
        \\ \notag &\leq \E\left[ \rho^{\delta}_{r - \e(r,s,\xi)} \mathrm{e}^{-\frac{1}{2}\langle \xi, \mathrm{var}(V_r^{\e(r,s,\xi)}|\F_{r-\e(r,s,\xi)}) \xi \rangle} \ | \ \F_s \right]
        \\ \notag &\leq \E\left[ \rho^{\delta}_{r - \e(r,s,\xi)} \exp\left( - 2^{-2H-1} \rho_{r-\e(r,s,\xi)}^2 (r-s)^{2H}|\xi|^{2 - 2Ha}\right) \ | \F_s \right]
        \\ &\lesssim (r-s)^{- \eta}|\xi|^{- \frac{\eta}{H}(1 - Ha)} \E\left[ \rho^{\delta - \frac{\eta}{H}}_{r - \e(r,s,\xi)} \ | \F_s \right]
 \end{align*}
 where we have used $\sup_{x \geq 0}x^{\eta/2H}\mathrm{e}^{-x} < \infty$. Finally, using \eqref{eq: weight function bound} we get
 \[
    \|\mathbb{A}_{s,t}(\xi)\|_{L^p(\Omega)} \lesssim \int_s^t (r-s)^{-\eta} |\xi|^{-\frac{\eta}{H}(1-Ha)} \, \mathrm{d}r
    = \frac{(t-s)^{1 - \eta}}{1-\eta}|\xi|^{-\frac{\eta}{H}(1-Ha)}
 \]
 which yields the desired bound on $\mathbb{A}_{s,t}(\xi)$. 

 \textit{Step 4.} {In this step, we complete the proof of \eqref{eq: 4} which also completes the proof of this theorem. Recall that the Fourier transform of the weighted occupation measure is given in \eqref{eq: FT}. Let $(s,t) \in \Delta_T^2$ with $s < t$, recall that $a \in (0,\frac{1}{H})$ and $\eta \in (0,1/2)$, then using step 3 we obtain for $|\xi| \geq \xi_*$ the estimate }
 \begin{align*}
  \| \E[\widehat{\ell}_{s,t}^{\delta}(\xi)\ | \ \F_s]\|_{L^p(\Omega)}
   &\leq \| \E[\widehat{\ell}_{s,t}^{\delta}(\xi)\ | \ \F_s] - \mathbb{A}_{s,t}(\xi)\|_{L^p(\Omega)} + \| \mathbb{A}_{s,t}(\xi)\|_{L^p(\Omega)}
  \\ &\leq \int_s^t \| \rho^{\delta}_r f_{\xi}(X_{r}) -  \rho^{\delta}_{r - \e(r,s,\xi)}f_{\xi}(X_{r}^{\e(r,s,\xi),r}) \|_{L^p(\Omega)}\, \mathrm{d}r
  \\ &\qquad + |\xi|^{-\frac{\eta}{H}(1-Ha)} (t-s)^{1-\eta}.
 \end{align*}
 The first term on the right-hand side can be estimated by
 \begin{align}\label{eq: 3}
    \notag &\ \int_s^t \| \rho^{\delta}_r f_{\xi}(X_{r}) -  \rho^{\delta}_{r - \e(r,s,\xi)}f_{\xi}(X_{r}^{\e(r,s,\xi),r}) \|_{L^p(\Omega)}\, \mathrm{d}r
    \\ \notag &\leq \int_s^t \| f_{\xi}(X_{r}^{\e(r,s,\xi),r})(\rho^{\delta}_{r} -  \rho^{\delta}_{r - \e(r,s,\xi)}) \|_{L^p(\Omega)}\, \mathrm{d}r 
     + \int_s^t \| \rho^{\delta}_{r}(f_{\xi}(X_{r}) - f_{\xi}(X_{r}^{\e(r,s,\xi),r}) )\|_{L^p(\Omega)}\, \mathrm{d}r 
    \\ &\lesssim \int_s^t \| \rho^{\delta}_r - \rho^{\delta}_{r - \e(r,s,\xi)}\|_{L^p(\Omega)}\, \mathrm{d}r 
    + \int_s^t \|  f_{\xi}(X_{r}) - f_{\xi}(X_{r}^{\e(r,s,\xi),r}) \|_{L^p(\Omega)}\, \mathrm{d}r
    \\ \notag &\lesssim \int_s^t \e(r,s,\xi)^{(1 \wedge \delta)\chi}\, \mathrm{d}r + |\xi| \int_s^t \| X_r - X_r^{\e(r,s,\xi),r}\|_{L^p(\Omega)}\, \mathrm{d}r
    \\ \notag &\lesssim {(t-s)^{1 + (1\wedge \delta)\chi} |\xi|^{- a (1\wedge \delta)\chi } + |\xi|^{1-a\zeta}(t-s)^{1+\zeta}}
    \\ \notag &\lesssim {(t-s)^{1 - \eta} \left( |\xi|^{- a (1\wedge \delta)\chi } + |\xi|^{1-a\zeta}\right)  }
 \end{align}
 where we have using the H\"older inequality, \eqref{eq: chi}, and \eqref{prop: local approximation VIto}. To summarise, we have shown that
 \begin{align*}
     \| \E[\widehat{\ell}_{s,t}^{\delta}(\xi)\ | \ \F_s]\|_{L^p(\Omega)} \lesssim \left( |\xi|^{-a (1\wedge \delta)\chi} + |\xi|^{1 - a \zeta} + |\xi|^{-\frac{\eta}{H}(1 - Ha)} \right)(t-s)^{1-\eta}.
 \end{align*}
 Note that we may still optimise the decay of $|\xi|$ by choosing $a \in (0,1/H)$. Let us take the particular choice $a = \frac{1 +\eta/H}{\zeta + \eta}$, which satisfies $a \in (1/\zeta, 1/H)$ since $\zeta > H$. Then we arrive at
 \begin{align}\label{eq: pointwise bound occupation measure}
     \| \E[\widehat{\ell}_{s,t}^{\delta}(\xi)\ | \ \F_s]\|_{L^p(\Omega)} \lesssim |\xi|^{- \kappa_*(\eta)}(t-s)^{1-\eta} \lesssim (1+|\xi|)^{-\kappa_*(\eta)}(t-s)^{1-\eta}
 \end{align}
 with $\kappa_*(\eta)$ defined in \eqref{eq: kappa regularity}. This proves the desired bound \eqref{eq: 4}.  
\end{proof}

The parameter $\eta$ describes a trade-off between spatial regularity captured by $\kappa < \kappa_*(\eta) - d/q$ and regularity in the time variables. The latter is a well-known effect already present in the case of fractional Brownian motion. The parameter {$\delta$ allows us to track the integrability of $\rho_t^{\delta - \eta/H}$, i.e. suppresses those regions where the noise vanishes. The choice $\delta = \eta/H$ cancels the dependence on the weight, hence corresponds to the usual occupation measure, and removes the dependence on $p$ in \eqref{eq: weight function bound}. In such a case, Theorem \ref{thm: VIto} can be applied to all $p$ that satisfy condition (A2).}

{The parameter $\chi$ is present to capture the loss of regularity caused by the weight $\rho_t$. The next remark shows that, for the usual occupation and self-intersection measure with $\delta = 0$, it plays no role.}

\begin{Remark}\label{remark: delta zero}
    The proofs, particularly \eqref{eq: 3}, show that for $\delta = 0$ no bounds on $\rho_t - \rho_s$ in $L^p(\Omega)$ are required and hence the regularity index takes the form 
    \[
    \kappa_*(\eta) = \frac{\eta}{\zeta + \eta} \left( \frac{\zeta}{H} - 1\right).
    \]
\end{Remark}

The bounds obtained for the weighted occupation measure can also be used to derive regularity for the \textit{weighted self-intersection measure} defined in \eqref{eq: weighted measures}.

\begin{Corollary}\label{cor: selfintersection time}
    Suppose that conditions (A1) -- (A3) are satisfied for $p \in (2,\infty)$. {Suppose that there exists $\chi \in (0,1]$ with property \eqref{eq: chi}, recall that $\zeta$ was defined in Theorem \ref{thm: VIto} and $\kappa_*(\eta)$ is given by \eqref{eq: kappa regularity}. Suppose that $\zeta > H$ and there exists $\eta \in (0,1/2)$ and $\delta \in [0,\eta/H]$ such that \eqref{eq: weight function bound} holds.} Then for each $1 \leq q < (1-\eta)\frac{p}{2}$, $\varepsilon \in (0, 1 - \eta - 2q/p)$, and $\kappa < 2\kappa_*(\eta) - d/q$ the weighted self-intersection measure satisfies 
    \[
    G^{\delta} \in L^{\frac{p}{2q}}(\Omega, \P; C^{1 - \eta - \frac{2q}{p} - \e}([0,T]; \mathcal{F}L_q^{\kappa}(\R^d))).
    \]
\end{Corollary}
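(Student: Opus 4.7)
The plan is to reduce the statement to the pointwise $L^p(\Omega)$-bound \eqref{eq: Lp bound FT} on the Fourier transform of the weighted occupation measure that was established inside the proof of Theorem~\ref{thm: VIto}. By Fubini's theorem the Fourier transform of $G_t^\delta$ factorises as
\[
\widehat{G_t^\delta}(\xi) = \int_0^t\int_0^t \rho_r^\delta \rho_u^\delta\, \mathrm{e}^{\mathrm{i}\langle \xi, X_u - X_r\rangle}\,\mathrm{d}r\,\mathrm{d}u = \bigl|\widehat{\ell_t^\delta}(\xi)\bigr|^2,
\]
so that the elementary identity $|a|^2 - |b|^2 = \bar a (a-b) + b \overline{(a-b)}$ gives the increment representation
\[
\widehat{G^\delta_{s,t}}(\xi) = \overline{\widehat{\ell^\delta_t}(\xi)}\,\widehat{\ell^\delta_{s,t}}(\xi) + \widehat{\ell^\delta_s}(\xi)\,\overline{\widehat{\ell^\delta_{s,t}}(\xi)}.
\]

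Applying the H\"older inequality in $\Omega$ to each of these two summands and invoking \eqref{eq: Lp bound FT} in the form $\|\widehat{\ell^\delta_{s,t}}(\xi)\|_{L^p(\Omega)} \lesssim (1+|\xi|)^{-\kappa_*(\eta)}(t-s)^{1-\eta}$, together with its consequence (take $s=0$) $\sup_{u \in [0,T]} \|\widehat{\ell^\delta_u}(\xi)\|_{L^p(\Omega)} \lesssim (1+|\xi|)^{-\kappa_*(\eta)}$, yields the pointwise-in-$\xi$ bound
\[
\|\widehat{G^\delta_{s,t}}(\xi)\|_{L^{p/2}(\Omega)} \lesssim (1+|\xi|)^{-2\kappa_*(\eta)}(t-s)^{1-\eta}.
\]
This is the exact analogue of \eqref{eq: Lp bound FT} for the self-intersection measure, with doubled spatial decay $2\kappa_*(\eta)$ and halved integrability exponent, both arising from the quadratic dependence of $\widehat{G^\delta_t}$ on $\widehat{\ell^\delta_t}$.

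Repeating the final Minkowski step from Step~1 of the proof of Theorem~\ref{thm: VIto} (which is valid because the assumption $q < (1-\eta)p/2$ ensures $p/(2q) \ge 1$) we then deduce
\[
\bigl\| \|G^\delta_{s,t}\|_{\mathcal{F}L_q^\kappa}\bigr\|_{L^{p/(2q)}(\Omega)} \lesssim \Bigl( \int_{\R^d}(1+|\xi|)^{q\kappa - 2q\kappa_*(\eta)}\,\mathrm{d}\xi \Bigr)^{1/q}(t-s)^{1-\eta} \lesssim (t-s)^{1-\eta},
\]
where the spatial integral converges precisely when $\kappa < 2\kappa_*(\eta) - d/q$. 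Since $(1-\eta)\cdot p/(2q) > 1$ by the choice of $q$, a routine application of the Kolmogorov--Chentsov theorem upgrades this moment bound on the increments to the asserted regularity in $C^{1-\eta - 2q/p - \e}([0,T]; \mathcal{F}L_q^\kappa(\R^d))$ with $L^{p/(2q)}(\Omega)$-integrable norm. The main conceptual work has already been carried out in Theorem~\ref{thm: VIto} through the stochastic sewing argument, so no new probabilistic obstacle arises here; the only delicate point is the quadratic factorisation above, which is what enables the improved decay and explains the halved integrability exponent compared with the occupation-measure case.
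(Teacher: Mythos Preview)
Your proof is correct and follows essentially the same route as the paper. The paper decomposes $\widehat{G}^{\delta}_{s,t}(\xi)$ into three terms by splitting the region $[0,t]^2\setminus[0,s]^2$ into rectangles, obtaining $\widehat{\ell}_{s,t}^{\delta}(\xi)\widehat{\ell}_{s,t}^{\delta}(-\xi) + \widehat{\ell}_{0,s}^{\delta}(\xi)\widehat{\ell}_{s,t}^{\delta}(-\xi) + \widehat{\ell}_{s,t}^{\delta}(\xi)\widehat{\ell}_{0,s}^{\delta}(-\xi)$; your two-term identity via $|a|^2-|b|^2$ is just a slightly more compact rewriting of the same factorisation (indeed, expanding $\overline{\widehat{\ell}^{\delta}_t(\xi)}=\widehat{\ell}^{\delta}_{0,s}(-\xi)+\widehat{\ell}^{\delta}_{s,t}(-\xi)$ recovers the paper's three terms), and the subsequent H\"older/Minkowski/Kolmogorov--Chentsov steps are identical.
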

\begin{proof}
    Let $(s,t) \in \Delta_T^2$ and $\xi \in \R^d$. Using the definition of the weighted self-intersection measure, we obtain for $\widehat{G}^{\delta}_{s,t}(\xi) = \widehat{G}^{\delta}_t(\xi) - \widehat{G}^{\delta}_s(\xi)$ the representation
\begin{align*}
    \widehat{G}^{\delta}_{s,t}(\xi) &= \int_s^t \int_s^t \rho^{\delta}_r \cdot \rho^{\delta}_{\tau}\cdot \mathrm{e}^{\mathrm{i}\langle \xi, X_r - X_{\tau}\rangle}\, \mathrm{d}\tau \mathrm{d}r
    + \int_0^s \int_s^t \rho^{\delta}_r \cdot \rho^{\delta}_{\tau}\cdot \mathrm{e}^{\mathrm{i}\langle \xi, X_r - X_{\tau}\rangle}\, \mathrm{d}\tau \mathrm{d}r
    \\ &\qquad + \int_s^t \int_0^s \rho^{\delta}_r \cdot \rho^{\delta}_{\tau}\cdot \mathrm{e}^{\mathrm{i}\langle \xi, X_r - X_{\tau}\rangle}\, \mathrm{d}\tau \mathrm{d}r
    \\ &= \widehat{\ell}_{s,t}^{\delta}(\xi)\widehat{\ell}_{s,t}^{\delta}(-\xi) + \widehat{\ell}_{0,s}^{\delta}(\xi)\widehat{\ell}_{s,t}^{\delta}(-\xi) + \widehat{\ell}_{s,t}^{\delta}(\xi)\widehat{\ell}_{0,s}^{\delta}(-\xi).
\end{align*}
 Using \eqref{eq: Lp bound FT} combined with the H\"older inequality, we arrive at 
 \begin{align*}
    \| \widehat{G}^{\delta}_{s,t}(\xi)\|_{L^{p/2}(\Omega)} 
    &\leq \| \widehat{\ell}_{s,t}^{\delta}(\xi)\|_{L^{p}(\Omega)} \|\widehat{\ell}_{s,t}^{\delta}(-\xi)\|_{L^{p}(\Omega)} + \|\widehat{\ell}_{0,s}^{\delta}(\xi)\|_{L^{p}(\Omega)}\|\widehat{\ell}_{s,t}^{\delta}(-\xi)\|_{L^{p}(\Omega)} 
    \\ &\qquad \qquad + \|\widehat{\ell}_{s,t}^{\delta}(\xi)\|_{L^{p}(\Omega)}\|\widehat{\ell}_{0,s}^{\delta}(-\xi)\|_{L^{p}(\Omega)}
    \\ &\lesssim |\xi|^{-2\kappa_*(\eta)}(t-s)^{1-\eta}
 \end{align*}
 for $|\xi| \geq \xi_*$. From this, we deduce, similarly to step 1 in the proof of Theorem \ref{thm: VIto}, the desired regularity in the Fourier-Lebesgue space. 
\end{proof}

\subsection{Regularity of the law}

Let $X$ be a Volterra It\^{o} process and $\ell_t^{\delta}$ the corresponding weighted occupation measure. Taking expectations in the definition of the weighted occupation measure gives $\E[ \ell_t^{\delta}(A)] = \int_0^t \mu_s^{\delta}(A)\, \mathrm{d}s$ where $\mu_s^{\delta}$ denotes the weighted law of the process defined by 
\[
  \mu^{\delta}_t(A) = \E\left[ \rho^{\delta}_t \1_A(X_t) \right], \qquad A \in \mathcal{B}(\R^d).
\]
Thus, if the conditions of Theorem \ref{thm: VIto} are satisfied, then $\ell_t^{\delta} \in \mathcal{F}L_q^{\kappa}(\R^d)$ which gives regularity for the integrated measure $\int_0^t \mu_s^{\delta}\, \mathrm{d}s$. Below we strengthen this observation by proving the absolute continuity with respect to the Lebesgue measure and dimension-independent regularity for its density in the Besov space $B_{1,\infty}^{s}(\R^d)$. For this purpose, we use the following lemma.

{
\begin{Lemma}\label{lemma: DENSITY}\cite[Lemma 2.1]{MR3022725}
    Let $\mu$ be a finite measure on $\R^d$. Assume that there exist $\eta, \kappa \in (0,1)$ and $C > 0$ such that
    \[
        \left| \int_{\R^d} \left( \phi(x+h) - \phi(x)\right)\, \mu(\mathrm{d}x)\right|
        \leq C \| \phi\|_{C_b^{\eta}} |h|^{\eta + \kappa}, \qquad |h| \leq 1
    \]
    holds for all $\phi \in C_b^{\eta}(\R^d)$. Then $\mu$ is absolutely continuous with respect to the Lebesgue measure on $\R^d$. Let $g \in L^1(\R^d)$ be the density of $\mu$. Then there exists another constant $\widetilde{C}_{d,\eta,\kappa} > 0$ that depends on $\eta, \kappa, d$ such that
    \[
        \| g\|_{B_{1,\infty}^{\kappa}} \leq \mu(\R^d) + C \widetilde{C}_{d,\eta,\kappa} < \infty.
    \]
\end{Lemma}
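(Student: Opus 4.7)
The plan is to leverage the well-known first-order difference characterization of $B_{1,\infty}^{\kappa}(\R^d)$ valid for $\kappa \in (0,1)$,
\[
    \|g\|_{B_{1,\infty}^{\kappa}} \asymp \|g\|_{L^1(\R^d)} + \sup_{0<|h|\leq 1} |h|^{-\kappa}\|g(\cdot+h)-g\|_{L^1(\R^d)},
\]
and to regularise $\mu$ by the heat semigroup $(P_t)_{t\geq 0}$. Setting $g_t := P_t\mu = p_t \ast \mu$ with $p_t$ the Gauss kernel, one has $g_t \in C_b^{\infty}(\R^d) \cap L^1(\R^d)$ and $\|g_t\|_{L^1} \leq \mu(\R^d)$. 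The first step is to convert the hypothesis into a translation bound on $g_t$: by $L^1$-$L^{\infty}$ duality, for every $\psi \in L^{\infty}$ with $\|\psi\|_{\infty} \leq 1$, Fubini yields
\[
    \int \psi(x)[g_t(x+h) - g_t(x)]\,\mathrm{d}x = \int [P_t\psi(y-h) - P_t\psi(y)]\,\mu(\mathrm{d}y),
\]
and standard heat-kernel estimates give $\|P_t\psi\|_{C_b^{\eta}} \lesssim t^{-\eta/2}$. Applying the hypothesis with $\phi = P_t\psi$ therefore produces
\[
    \|g_t(\cdot+h) - g_t\|_{L^1} \leq C'\, t^{-\eta/2}|h|^{\eta+\kappa}, \qquad |h|\leq 1.
\]

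Next I would establish that $\{g_{2^{-n}}\}_n$ is Cauchy in $L^1(\R^d)$. The semigroup identity $g_{t+s}-g_s = (P_t - I)g_s$ and the representation $P_t f = \int p_t(h) f(\cdot+h)\,\mathrm{d}h$ give
\[
\|g_{t+s}-g_s\|_{L^1} \leq \int p_t(h) \|g_s(\cdot+h) - g_s\|_{L^1}\,\mathrm{d}h.
\]
Splitting the integral at $|h|=1$ (using the translation bound in the inner region and the trivial $\leq 2\mu(\R^d)$ together with the Gaussian tail decay of $p_t$ in the outer region) yields $\|g_{t+s} - g_s\|_{L^1} \lesssim s^{-\eta/2}\,t^{(\eta+\kappa)/2} + \mu(\R^d)\,e^{-c/t}$. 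Specialising to $s=t=2^{-(n+1)}$ gives $\|g_{2^{-n}}-g_{2^{-(n+1)}}\|_{L^1} \lesssim 2^{-(n+1)\kappa/2}$, a summable telescoping sequence. Hence $g_{2^{-n}} \to g$ in $L^1(\R^d)$ for some limit $g$, with $\|g_t - g\|_{L^1} \lesssim t^{\kappa/2}$ along this scale. Since $g_t\,\mathrm{d}x \to \mu$ weakly in the sense of measures as $t\to 0$, the limit $g$ must coincide with the density of $\mu$; in particular, $\mu$ is absolutely continuous.

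To conclude, for $|h| \leq 1$, the triangle inequality
\[
\|g(\cdot+h) - g\|_{L^1} \leq 2\|g - g_{|h|^2}\|_{L^1} + \|g_{|h|^2}(\cdot+h) - g_{|h|^2}\|_{L^1},
\]
combined with $\|g - g_{|h|^2}\|_{L^1} \lesssim |h|^{\kappa}$ (from the Cauchy estimate at $t = |h|^2$) and with the translation bound from the first step taken at $t = |h|^2$ (which gives $\|g_{|h|^2}(\cdot+h) - g_{|h|^2}\|_{L^1} \lesssim |h|^{\kappa}$), yields $\|g(\cdot+h)-g\|_{L^1} \leq C''|h|^{\kappa}$. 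The difference characterization of $B_{1,\infty}^{\kappa}$ then produces the claimed bound with some $\widetilde{C}_{d,\eta,\kappa}$ absorbing all implicit constants.

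The main obstacle is the delicate matching of scales. The hypothesis applied to the mollified test function $P_t\psi$ incurs a loss $t^{-\eta/2}$, which has to be absorbed by the extra gain $|h|^{\eta}$ on the translation side, delivered by the regularity exponent $\eta+\kappa$ in the hypothesis. The heat-semigroup characterization then converts the scale-constrained bound on $g_t$ (effective only when $t \geq |h|^2$) into a scale-free control on the density $g$ via the telescoping Cauchy estimate, whose convergence is ensured precisely by $\kappa>0$.
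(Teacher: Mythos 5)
Your proof is correct. Note first that the paper does not actually prove this lemma: it is quoted verbatim from \cite[Lemma 2.1]{MR3022725}, so there is no internal proof to compare against. The argument in that reference works directly on the Littlewood--Paley blocks: one writes $\|\Delta_j\mu\|_{L^1}$ by duality against $\psi\in L^\infty$, uses that $\mathcal{F}^{-1}\varphi_j$ has vanishing integral for $j\geq 1$ to express $\Delta_j$ as an average of first differences over shifts $|h|\sim 2^{-j}$, and applies the hypothesis to $\phi=\Delta_j\psi$ with $\|\Delta_j\psi\|_{C_b^\eta}\lesssim 2^{j\eta}$, yielding $\|\Delta_j\mu\|_{L^1}\lesssim C2^{-j\kappa}$ and then absolute continuity from $B_{1,\infty}^{\kappa}\hookrightarrow L^1$ for $\kappa>0$. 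Your route replaces the dyadic blocks by the heat semigroup and the first-difference characterization of $B_{1,\infty}^{\kappa}$ for $\kappa\in(0,1)$; the scale-matching $t\sim|h|^2$ plays exactly the role of $|h|\sim 2^{-j}$ in the blockwise proof, and the loss $t^{-\eta/2}$ from mollifying the test function is the analogue of the factor $2^{j\eta}$. Both arguments are valid and of comparable length; yours has the small advantage of producing the density as an explicit $L^1$ limit of the mollifications $g_{2^{-n}}$, at the cost of invoking the equivalence of the difference and Littlewood--Paley norms. Two minor points to tidy up: (i) you use $\|g_t-g\|_{L^1}\lesssim t^{\kappa/2}$ at $t=|h|^2$, which is not dyadic, so you should extend the telescoping estimate from $t=2^{-n}$ to all $t\in(0,1]$ (one extra application of the $\|g_{t+s}-g_s\|_{L^1}$ bound with $s=2^{-(n+1)}$ and $t-s\leq 2^{-(n+1)}$ does it); (ii) the equivalence constant of the difference characterization multiplies the whole right-hand side, so you obtain $\|g\|_{B_{1,\infty}^{\kappa}}\leq \widetilde{C}_{d,\eta,\kappa}\bigl(\mu(\R^d)+C\bigr)$ rather than the literal form stated, which is immaterial for every use of the lemma in the paper.
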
 }

{
Below we apply this lemma for $\mu^{\delta}_t$ with $t > 0$ fixed. Since $\rho_t$ is bounded by assumption (A3), $\mu_t^{\delta}$ is clearly a finite measure.} The following is our main result on the regularity of $\mu_t^{\delta}$.

\begin{Theorem}\label{thm: VIto density}
 Suppose that conditions (A1) -- (A3) are satisfied. Suppose that there exists $\chi \in (0,1]$ such that 
 \[
    \| \rho_t - \rho_s \|_{L^1(\Omega)} \lesssim (t-s)^{\chi}
 \]
 holds uniformly in $(s,t) \in \Delta_T^2$. If $H < \zeta$, then the following assertions hold:
 \begin{enumerate}
     \item[(a)] {If there exists $\eta > 0$ and $\delta \in [0,\eta/H]$ such that
     \begin{align}\label{eq: 5}
        \sup_{t \in [0,T]}\E\left[ \rho^{\delta - \frac{\eta}{H}} \right] < \infty,
     \end{align} 
     then for each $q \in [1,\infty]$ and $\kappa < \kappa_*(\eta) - d/q$ the weighted law satisfies $\mu^{\delta}_t \in \mathcal{F}L_q^{\kappa}(\R^d)$ and $\| \mu_t^{\delta} \|_{\mathcal{F}L_q^{\kappa}} \lesssim (1\wedge t)^{-\eta}$ for $t \in (0,T]$.
     \item[(b)] If there exists $\delta \in [0,1]$ such that 
     \[
        \sup_{t \in [0,T]}\E\left[ \rho_t^{\delta-1}\right] < \infty,
     \]
     then $\mu^{\delta}_t$ is absolutely continuous with respect to the Lebesgue measure, its density $\mu_t^{\delta}(dx) = g_t^{\delta}(x)dx$ satisfies $g_t^{\delta} \in B_{1,\infty}^{\kappa'}(\R^d)$ for some $\kappa' > 0$ and each $t \in (0,T]$, and it holds that
     \[
        \| g_t^{\delta} \|_{B_{1,\infty}^{\kappa'}} \lesssim (1 \wedge t)^{- H}.
     \] }
 \end{enumerate}
\end{Theorem}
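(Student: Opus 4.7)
The plan is to adapt the Euler approximation $X_t^{\e,t}$ from the proof of Theorem~\ref{thm: VIto} combined with the conditional Gaussianity provided by (A3), applied at the time $t$ directly rather than under the time-integral that defines the Fourier transform of $\ell_t^{\delta}$. Both parts rest on the identity $\int \phi\, \mathrm{d}\mu_t^{\delta} = \E[\rho_t^{\delta}\phi(X_t)]$: for (a) I will take $\phi(x) = \mathrm{e}^{\mathrm{i}\langle \xi, x\rangle}$, while for (b) I will take $\phi \in C_b^{\eta}(\R^d)$ and feed the resulting increment bound into Lemma~\ref{lemma: DENSITY}. In both settings, I will simultaneously replace $X_t$ by $X_t^{\e,t}$ and $\rho_t$ by $\rho_{t-\e}$, control the approximation errors via \eqref{eq: chi} and \eqref{prop: local approximation VIto}, and condition the main term on $\F_{t-\e}$.

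For (a), I fix $|\xi| \geq \xi_*$ and set $\e = (1\wedge t)/(2|\xi|^a)$ with $a = (1+\eta/H)/(\zeta+\eta)$. Conditioning on $\F_{t-\e}$ gives
\[
\left|\E\left[\rho_{t-\e}^{\delta}\, \mathrm{e}^{\mathrm{i}\langle \xi, X_t^{\e,t}\rangle}\right]\right|
\leq \E\left[\rho_{t-\e}^{\delta}\,\mathrm{e}^{-c\rho_{t-\e}^2\,\e^{2H}|\xi|^2}\right],
\]
and $\sup_{x \geq 0} x^{\eta/(2H)}\mathrm{e}^{-x} < \infty$ together with \eqref{eq: 5} bounds the right-hand side by $(1\wedge t)^{-\eta}|\xi|^{a\eta - \eta/H}$. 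Adding the approximation errors $\e^{(1\wedge\delta)\chi}$ and $|\xi|\e^{\zeta}$, and exploiting the exponent balance from Step~4 of Theorem~\ref{thm: VIto}, should yield the pointwise bound $|\mathcal{F}\mu_t^{\delta}(\xi)| \lesssim (1\wedge t)^{-\eta}(1+|\xi|)^{-\kappa_*(\eta)}$ (absorbing $|\xi| \leq \xi_*$ via the trivial bound $|\mathcal{F}\mu_t^{\delta}(\xi)| \leq \E[\rho_t^{\delta}] \leq 1$). Raising to the $q$-th power and integrating then produces $\|\mu_t^{\delta}\|_{\mathcal{F}L_q^{\kappa}} \lesssim (1\wedge t)^{-\eta}$ whenever $\kappa < \kappa_*(\eta) - d/q$.

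For (b), I verify the hypothesis of Lemma~\ref{lemma: DENSITY}. Fix $\phi \in C_b^\eta(\R^d)$ with $\eta \in (0,1)$ small, $|h| \leq 1$, pick $a \in (1/\zeta, 1/H)$ (non-empty since $H < \zeta$), and set $\e = |h|^a \wedge (1\wedge t)/2$. The decomposition
\begin{align*}
\rho_t^{\delta}[\phi(X_t+h) - \phi(X_t)]
&= (\rho_t^{\delta} - \rho_{t-\e}^{\delta})[\phi(X_t+h) - \phi(X_t)] \\
&\quad + \rho_{t-\e}^{\delta}[\phi(X_t+h) - \phi(X_t^{\e,t}+h) - \phi(X_t) + \phi(X_t^{\e,t})] \\
&\quad + \rho_{t-\e}^{\delta}[\phi(X_t^{\e,t}+h) - \phi(X_t^{\e,t})]
\end{align*}
isolates two error lines, bounded by $\|\phi\|_{C_b^\eta}(|h|^\eta \e^{(1\wedge\delta)\chi} + \e^{\eta\zeta})$, from a main term. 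Conditional on $\F_{t-\e}$, the random variable $X_t^{\e,t}$ is Gaussian with covariance $\Sigma \succeq c\rho_{t-\e}^2 \e^{2H} I$ by (A3), and the elementary smoothing estimate $\|\nabla(\phi * p_\Sigma)\|_\infty \lesssim \|\phi\|_{C_b^\eta}\lambda_{\min}(\Sigma)^{(\eta-1)/2}$, obtained from $\int \phi(w)\nabla p_\Sigma(w-u)\, \mathrm{d}w = \int (\phi(w)-\phi(u))\nabla p_\Sigma(w-u)\, \mathrm{d}w$, will give $|\E[\phi(X_t^{\e,t}+h) - \phi(X_t^{\e,t}) \mid \F_{t-\e}]| \lesssim \|\phi\|_{C_b^\eta}|h|\rho_{t-\e}^{\eta-1}\e^{H(\eta-1)}$. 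Multiplying by $\rho_{t-\e}^{\delta}$ and using $\rho^{\delta+\eta-1} \leq \rho^{\delta-1}$ (valid since $\rho \leq 1$ and $\eta \geq 0$) with the moment assumption produces a main-term bound $\lesssim \|\phi\|_{C_b^\eta}|h|\e^{H(\eta-1)}$.

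Summing yields $I(h) := |\int(\phi(x+h) - \phi(x))\, \mathrm{d}\mu_t^{\delta}| \lesssim \|\phi\|_{C_b^\eta}\bigl(|h|\e^{H(\eta-1)} + |h|^\eta \e^{(1\wedge\delta)\chi} + \e^{\eta\zeta}\bigr)$. On the regime $|h|^a \leq (1\wedge t)/2$ with $\e = |h|^a$, the three exponents $1+aH(\eta-1)$, $\eta+a(1\wedge\delta)\chi$, and $a\eta\zeta$ all strictly exceed $\eta$ thanks to $a \in (1/\zeta, 1/H)$, so taking $\kappa'$ equal to the smallest excess gives the target $|h|^{\eta+\kappa'}$ bound with constant independent of $t$. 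On the complementary regime where $\e = (1\wedge t)/2$ and $|h| \geq ((1\wedge t)/2)^{1/a}$, each term is $\lesssim (1\wedge t)^{-H}|h|^{\eta+\kappa'}$: the main contribution $|h|(1\wedge t)^{-H(1-\eta)} \leq (1\wedge t)^{-H}|h|^{\eta+\kappa'}$ (using $|h|\leq 1$), while the $\rho$- and $X$-error contributions are controlled by the lower bound on $|h|$ provided $\kappa' \leq aH$. Lemma~\ref{lemma: DENSITY} then delivers the absolute continuity and the bound $\|g_t^\delta\|_{B_{1,\infty}^{\kappa'}} \lesssim (1\wedge t)^{-H}$. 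The main technical obstacle will be harmonising the two regimes so that the small-$t$ singularity $(1\wedge t)^{-H}$, forced by the truncation $\e \leq (1\wedge t)/2$, does not degrade the $|h|^{\eta+\kappa'}$ gain produced by the Gaussian smoothing when $|h|$ is small compared to $t$.
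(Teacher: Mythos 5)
Your proposal follows essentially the same route as the paper for both parts: approximate $X_t$ by the one-step Euler scheme $X_t^{\e,t}$ at the terminal time, replace $\rho_t$ by $\rho_{t-\e}$, control the errors via \eqref{eq: chi} and \eqref{prop: local approximation VIto}, condition on $\F_{t-\e}$ to exploit the Gaussian law of $V_t^{\e}$ with the variance lower bound from (A3), and feed the resulting increment bound into Lemma \ref{lemma: DENSITY} for part (b). Part (a) is the paper's argument verbatim. In part (b) there are two cosmetic-to-minor deviations. First, the paper takes the single multiplicative choice $\e = (1\wedge t)|h|^{a}$, which keeps $\e \leq 1\wedge t$ automatically and lets all three terms be absorbed into $(1\wedge t)^{-H}|h|^{\eta+\kappa(a,\eta)}$ without any case analysis; your truncation $\e = |h|^a \wedge (1\wedge t)/2$ forces the two-regime discussion you describe, which does close (in the second regime the error terms are still $\lesssim |h|^{a(1\wedge\delta)\chi+\eta}$ and $|h|^{a\eta\zeta}$ because $\e \leq |h|^a$ there as well), but buys nothing. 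Second, and this is the one genuine flaw: the smoothing estimate $\|\nabla(\phi\ast p_{\Sigma})\|_{\infty} \lesssim \|\phi\|_{C_b^{\eta}}\lambda_{\min}(\Sigma)^{(\eta-1)/2}$ is false for anisotropic covariances. Writing $\nabla p_{\Sigma}(w) = -\Sigma^{-1}w\, p_{\Sigma}(w)$ and substituting $w = \Sigma^{1/2}z$, the subtraction trick yields $\int |w|^{\eta}|\nabla p_{\Sigma}(w)|\,\mathrm{d}w \lesssim \lambda_{\max}(\Sigma)^{\eta/2}\lambda_{\min}(\Sigma)^{-1/2}$, and the factor $\lambda_{\max}(\Sigma)^{\eta/2}$ cannot be dropped (consider $\Sigma = \mathrm{diag}(M,1)$ with $M$ large); here $\lambda_{\max}$ of the conditional variance involves $|\sigma_{t-\e}|^2$, for which no uniform moment bound is assumed. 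The fix is immediate and is what the paper does: forgo the $|h|^{\eta}$ gain in the main term and use $\|\Delta_{-h}f_t^{\e}(X_{t-\e},\cdot)\|_{L^1} \leq |h|\,\|\partial_z f_t^{\e}\|_{L^1} \lesssim |h|\,\lambda_{\min}(\Sigma)^{-1/2} \lesssim |h|\,\rho_{t-\e}^{-1}\e^{-H}$, which still produces the exponent $1 - Ha - \eta > 0$ after multiplying by $|h|^{-\eta}$ in the comparison with $\|\phi\|_{C_b^{\eta}}|h|^{\eta+\kappa'}$. With that replacement your argument is correct and coincides with the paper's.
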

\begin{proof}
    (a) Fix $t \in (0,T]$ and let $\xi \in \R^d$ with $|\xi| \geq 1$. Let $\kappa < \kappa_*(\eta) - d/q$, define $\e = (1\wedge t)|\xi|^{-a}$ with $a = \frac{1 +\eta/H}{\zeta + \eta}$, and let $X_t^{\e,t}$ be given by \eqref{eq: local approximation VIt\^{o}}. Then we obtain
    \begin{align*}
        \left| \E\left[ \rho^{\delta}_{t}\cdot \mathrm{e}^{\mathrm{i}\langle \xi, X_t\rangle} \right]\right| 
        &\leq \left| \E\left[ \rho^{\delta}_t \cdot \mathrm{e}^{\mathrm{i}\langle \xi, X_t\rangle} \right] - \E\left[ \rho^{\delta}_{t-\e}\cdot \mathrm{e}^{\mathrm{i}\langle \xi, X^{\e,t}_t\rangle} \right] \right| + \left| \E\left[ \rho^{\delta}_{t-\e}\cdot \mathrm{e}^{\mathrm{i}\langle \xi, X^{\e,t}_t\rangle} \right]\right|.
    \end{align*}
    The first term satisfies
    \begin{align*}
        &\ \left| \E\left[ \rho^{\delta}_t \cdot \mathrm{e}^{\mathrm{i}\langle \xi, X_t\rangle} \right] - \E\left[ \rho^{\delta}_{t-\e}\cdot \mathrm{e}^{\mathrm{i}\langle \xi, X^{\e,t}_t\rangle} \right] \right| 
        \\ &\leq \left| \E\left[ (\rho^{\delta}_t - \rho^{\delta}_{t-\e})\cdot\mathrm{e}^{\mathrm{i}\langle \xi, X^{\varepsilon,t}_{t}\rangle}\right]\right| 
        + \left| \E\left[ \rho^{\delta}_{t}\cdot (\mathrm{e}^{\mathrm{i}\langle \xi, X_t\rangle} - \mathrm{e}^{\mathrm{i}\langle \xi, X_t^{\e,t}\rangle})\right] \right|
        \\ &\lesssim \| \rho_t - \rho_{t-\e}\|_{L^1(\Omega)}^{1\wedge \delta} + |\xi| \| X_t - X_t^{\e,t} \|_{L^p(\Omega)} \lesssim |\xi|^{-a (1\wedge \delta)\chi} + |\xi|^{1 - a\zeta}
    \end{align*}
    where we have used \eqref{prop: local approximation VIto} and $\| \rho_t - \rho_s \|_{L^1(\Omega)} \lesssim (t-s)^{\chi}$. For the second term, we use the decomposition $X_t^{\e,t} = U_t^{\e} + V_t^{\e}$ given as in \eqref{eq: U approximation} and \eqref{eq: V approximation}, that $V_t^{\e}$ is conditionally on $\F_{t-\e}$ Gaussian and the lower bound on the variance \eqref{eq: variance bound} to find that 
    \begin{align*}
        \left| \E\left[ \rho^{\delta}_{t-\e}\cdot \mathrm{e}^{\mathrm{i}\langle \xi, X^{\e,t}_t\rangle} \right]\right| &= \left|\E\left[ \rho^{\delta}_{t-\e}\cdot \mathrm{e}^{\mathrm{i}\langle \xi, U_t^{\e}\rangle} \E\left[ \mathrm{e}^{ - \frac{1}{2}\langle\xi, \mathrm{var}(V_t^{\e}|\F_{t-\e})\xi\rangle}\ | \ \F_{t-\e}\right]\right] \right|
        \\ &\lesssim \E\left[ \rho^{\delta}_{t - \e} \exp\left( - \frac{1}{2} \rho_{t-\e}^2 (1\wedge t)^{2H}|\xi|^{2 - 2Ha}\right) \right]
        \\ &\lesssim \E\left[ \rho^{\delta}_{t - \e} \rho_{t-\e}^{-\eta/H} \right] \cdot (1\wedge t)^{-\eta} |\xi|^{-\frac{\eta}{H}(1-Ha)}
        \lesssim (1\wedge t)^{-\eta} |\xi|^{-\frac{\eta}{H}(1-Ha)}
    \end{align*}
    where we have used $\sup_{x \geq 0}x^{\eta/2H}e^{-x} < \infty$ {and condition \eqref{eq: 5}}. Combining both estimates and using the particular form of $a$, we arrive at
    \begin{align*}
         \left| \E\left[ \rho^{\delta}_{t}\cdot \mathrm{e}^{\mathrm{i}\langle \xi, X_t\rangle} \right]\right| &\lesssim (1\wedge t)^{-\eta} |\xi|^{-\kappa_*(\eta)}.
    \end{align*}
    The latter readily implies $\mu_t^{\delta} \in \mathcal{F}L_{q}^{\kappa}(\R^d)$ and the desired bound in the $\mathcal{F}L_q^{\kappa}(\R^d)$ norm.

    (b) Let $t > 0$, $\e = (1\wedge t)|h|^{a}$ with $|h| \leq 1$, and $a > 0$ to be fixed later on. Let $\Delta_h \phi(x) = \phi(x+h) - \phi(x)$ denote the difference operator. Fix $\phi \in C_b^{\eta}(\R^d)$ with $\eta \in (0,1)$. Then we obtain 
    \begin{align} \notag
    \left| \E\left[ \rho^{\delta}_t\cdot \Delta_h \phi(X_t)\right] \right|
    &\leq \left| \E\left[ (\rho^{\delta}_t - \rho^{\delta}_{t-\e})\cdot \Delta_h \phi(X^{\varepsilon,t}_t)\right] \right| + \left| \E\left[ \rho^{\delta}_{t}\cdot (\Delta_h \phi(X_t) - \Delta_h \phi(X_{t}^{\e,t}))\right] \right|
    \\ &\qquad \qquad + \left| \E\left[ \rho^{\delta}_{t-\e}\cdot \Delta_h \phi(X_t^{\e,t})\right] \right| \notag
    \\ &\lesssim \| \phi\|_{C_b^{\eta}}|h|^{\eta}\| \rho^{\delta}_t - \rho^{\delta}_{t-\e}\|_{L^1(\Omega)}
    + \|\phi\|_{C_b^{\eta}}\|X_t - X_t^{\e,t}\|_{L^1(\Omega)}^{\eta} \notag
     \\ &\qquad \qquad + \left| \E\left[ \rho^{\delta}_{t-\e}\cdot \Delta_h \phi(X_t^{\e,t})\right] \right| \notag
    \\ &\lesssim  \| \phi\|_{C_b^{\eta}}|h|^{\eta} \e^{(1\wedge \delta)\chi} + \| \phi\|_{C_b^{\eta}}\e^{\zeta \eta} + \left| \E\left[ \rho^{\delta}_{t-\e}\cdot \Delta_h \phi(X_t^{\e,t})\right] \right|. \label{eq: 6}
    \end{align} {
    where we have used $\rho_t \leq 1$, $\| \rho_t - \rho_s \|_{L^1(\Omega)} \lesssim (t-s)^{\chi}$, \eqref{prop: local approximation VIto}, and}
    \begin{align*}
        \left| \E\left[ \rho^{\delta}_{t}\cdot (\Delta_h \phi(X_t) - \Delta_h \phi(X_{t}^{\e,t}))\right] \right| 
        &\leq \|\Delta_h \phi(X_t) - \Delta_h \phi(X_t^{\e,t})\|_{L^1(\Omega)}
        \\ &\leq \|\phi\|_{C_b^{\eta}}\|X_t - X_t^{\e,t}\|_{L^1(\Omega)}^{\eta}.
    \end{align*}

    {To estimate the last term in \eqref{eq: 6}}, write $X_t^{\e,t} = U_t^{\e} + V_t^{\e}$ as before. Recall that $V_t^{\e}$ is conditionally on $\F_{t-\e}$-Gaussian with covariance \eqref{eq: variance bound}. Let $f_t^{\e}(X_{t-\e},z)\mathrm{d}z$ be the law of $V_t^{\e}$ and $\lambda_{\mathrm{min}}( \mathrm{var}(V_t^{\e}|\F_{t-\e}))$ denote the smallest eigenvalue of its conditional variance. Then 
 \begin{align*}
  \| \Delta_{-h} f_t^{\e}(X_{t-\e},\cdot) \|_{L^1(\R^d)}
  &\lesssim |h|\| \partial_z f_t^{\e}(X_{t-\e},\cdot)\|_{L^1(\R^d)} 
  \\ &\lesssim |h| \left( \lambda_{\mathrm{min}}( \mathrm{var}(V_t^{\e}|\F_{t-\e}))\right)^{-1/2}
  \\ &\lesssim |h|\rho_{t-\e}^{-1}\e^{-H}
  \\ &= (1\wedge t)^{-H}|h|^{1-Ha} \rho_{t-\e}^{-1}
 \end{align*}
 where the last inequality is a consequence of \eqref{eq: variance bound} and the particular form of the density $f_t^{\e}$. Since $U_t^{\e}$ is $\F_{t-\e}$-measurable, conditioning on $\F_{t-\e}$ finally gives 
 \begin{align*}
     \E\left[ \rho^{\delta}_{t-\e}\cdot \Delta_h \phi(X_t^{\e,t})\right] 
     &= \E\left[ \rho^{\delta}_{t-\e} \cdot \E\left[\Delta_h \phi(U_t^{\e} + V_t^{\e}) \ | \ \F_{t-\e} \right]\right]
     \\ &= \E\left[ \rho^{\delta}_{t-\e} \cdot \int_{\R^d} \Delta_h \phi(U_t^{\e} + z) f_t^{\e}(X_{t-\e},z) \mathrm{d}z \right]
     \\ &= \E\left[ \rho^{\delta}_{t-\e} \cdot \int_{\R^d}  \phi(U_t^{\e} + z) \Delta_{-h} f_t^{\e}(X_{t-\e},z) \mathrm{d}z \right].
 \end{align*}
 Thus, in view of the $L^1(\R^d)$ estimate on $f_t^{\e}$, we arrive at 
 \begin{align*}
   \left| \E[ \rho^{\delta}_{t-\e} \cdot \Delta_h\phi(X^{\e,t}_{t}) ] \right| 
  &\leq \| \phi\|_{C_b^{\eta}} \E\left[ \rho^{\delta}_{t-\e}\cdot \| \Delta_{-h} f_t^{\e}(X_{t-\e},\cdot) \|_{L^1(\R^d)} \right] 
  \\ &\lesssim \| \phi\|_{C_b^{\eta}}|h|^{1- Ha}(1\wedge t)^{-H}
 \end{align*}
 where we have used $\sup_{t \in [0,T]}\E[ \rho_t^{\delta-1}] < \infty$. Using the particular form of $\e$ and combining the above estimates, we obtain
 \begin{align*}
     \left| \E\left[ \rho^{\delta}_t\cdot \Delta_h \phi(X_t)\right] \right| 
     &\lesssim \| \phi\|_{C_b^{\eta}}(1\wedge t)^{-H} |h|^{\eta + \kappa(a,\eta)}
 \end{align*}
 where $\kappa(a,\eta) = \min\{a \chi,\ (\zeta a - 1)\eta,\ 1-Ha - \eta \}$. Since $\zeta > H$ we find $a \in (0,1/H)$ such that $\zeta a - 1 > 0$. Hence, letting $\eta \in (0,1)$ be small enough gives $1 - Ha - \eta > 0$ and thus $\kappa(a,\eta) > 0$. {The assertion follows from Lemma \ref{lemma: DENSITY}. }
\end{proof}

By using an anisotropic version of \cite[Lemma 2.1]{MR3022725} as given in \cite{MR4255174}, one may also obtain refined regularity in anisotropic Besov spaces for Volterra It\^{o} processes driven where $\sigma_t$ is diagonal, $K_{\sigma} = \mathrm{diag}(K_1,\dots, K_d)$, and each $K_j$ satisfies (A3) with some $H_j > 0$. The latter covers, in particular, the case where the noise is given by $(B_t^{H_1}, \dots, B_t^{H_d})$ where $B^{H_1},\dots, B^{H_d}$ are independent fractional Brownian motions. Details of such a result are given in \cite{FGW25} and provide an important tool to prove the failure of the Markov property for stochastic Volterra processes.

\subsection{Examples}

In this section, we illustrate the results by two examples that complement the existing literature. Consider a Volterra It\^{o} process \eqref{eq: VIt\^{o}} where $\sigma_t \equiv \sigma \in \R^{k \times m}$ is (for simplicity) deterministic such that $\mathrm{det}(\sigma \sigma^{\top}) > 0$, i.e.
\begin{align}\label{eq: 1}
    X_t = g(t) + \int_0^t K_b(t,s)b_s\, \mathrm{d}s + \int_0^t K_{\sigma}(t,s)\sigma \, \mathrm{d}B_s.
\end{align}
Here $g: \Omega \times [0,T] \longrightarrow \R^d$ is $\mathcal{F}_0\ \otimes \ \mathcal{B}([0,T])$-measurable, $B$ is an $m$-dimensional Brownian motion, and $b: \Omega \times [0,T] \longrightarrow \R^n$ is progressively measurable. {Below we study the space-time H\"older regularity for the densities of the occupation measure $\ell_t$ and the self-intersection measure $G_t$ defined in \eqref{eq: no weight}.}

When $b_s \equiv 0$, the process is Gaussian and our results obtained in Theorem \ref{thm: VIto} essentially coincide with those obtained in \cite{MR4488556, MR4342752}. More recently, in \cite[Theorem 2.16]{BLM23}, the authors studied the regularity of the local time for the perturbation $X_t = \psi_t + B_t^H$ of the fractional Brownian motion $B^H$ with an adapted path $\psi$ of finite $1$-variation. While their result applies to a general class of perturbations without structural assumptions, below we prove a similar result for \eqref{eq: 1}, which corresponds to the class of perturbations $\psi_t = g(t) + \int_0^t K_b(t,s)b_s\, \mathrm{d}s$ that do not need to have finite $1$-variation. 

\begin{Example}
  Suppose that there exist $\gamma \geq 0$, $C_T > 0$ such that for all $(s,t) \in \Delta_T^2$
  \[
    \int_s^t |K_b(t,r)|\, \mathrm{d}r \leq C_T(t-s)^{\gamma}\ \text{ and } \ 
    \int_s^t |K_{\sigma}(t,r)|^2\, \mathrm{d}r < \infty,
  \]
  and that $K_{\sigma}$ satisfies \eqref{eq: local nondeterminism K} with constant $H > 0$. Moreover, assume that there exists $\alpha \geq 0$ and $p \in [2,\infty)$ such that for all $(s,t) \in \Delta_T^2$
 \[
    \|b_t - \E[b_t \ | \ \F_s]\|_{L^p(\Omega)} \lesssim (t-s)^{\alpha}.
 \]
 Finally assume that $\mathrm{det}(\sigma \sigma^{\top}) > 0$ and that $\alpha + \gamma > 0$. Define 
  \[
  \kappa_*(\eta) = \frac{\alpha + \gamma}{\alpha + \gamma + \eta}\frac{\eta}{H} - \frac{\eta}{\alpha + \gamma + \eta}, \qquad \eta \in (0,1/2).
  \]  
  Then the local time $\ell_t$ satisfies a.s.
  \[
    \ell \in C^{1 - \frac{1}{p} - \eta - \e}([0,T];\mathcal{C}^{\kappa_*(\eta) - d - \e}(\R^d)), \qquad \eta \in (0,1/2), \ \e > 0.
  \]
  In particular, if $H\left(2d + \frac{d+1}{\alpha + \gamma}\right) < 1$ then $\ell$ has a jointly H\"older continuous density. Moreover, if $p > 2$, then the self-intersection time $G_t$ satisfies
  \[
  G \in C^{1 - \frac{2}{p} - \eta - \e}([0,T]; \mathcal{C}^{2 \kappa_*(\eta) - d - \e}(\R^d)), \qquad \eta \in (0,1/2), \ \e > 0.
  \]
  In particular, if $H\left(d + \frac{\frac{d}{2} + 1}{\alpha + \gamma}\right) < 1$, then $G$ has a jointly H\"older continuous density.
\end{Example}
\begin{proof}
 Condition (A1) is satisfied for $\gamma_b = \gamma$ and $\gamma_{\sigma} = 0$. Condition (A2) holds for $\alpha_b = \alpha$ and $\alpha_{\sigma} > 0$ arbitrary, while condition (A3) holds due to assumption \eqref{eq: local nondeterminism K} with $\rho_t^2 = \lambda_{\min}(\sigma \sigma^{\top}) > 0$. Thus {let us apply Theorem \ref{thm: VIto} with $\eta \in (0,1/2)$, where condition \eqref{eq: chi} holds for any choice of $\chi > 0$, and also \eqref{eq: weight function bound} is satisfied for $\delta = 0$ since $\rho_t$ is deterministic and constant.} Thus we obtain $\zeta = \alpha + \gamma > 0$ and hence $\kappa_*(\eta)$ is given by \eqref{eq: kappa regularity} with $\eta \in (0,1/2)$. An application of Theorem \ref{thm: VIto} gives {$\ell \in L^{\frac{p}{q}}(\Omega, \P; C^{1 - \eta - \frac{q}{p} - \e}([0,T]; \mathcal{F}L_q^{\kappa}(\R^d)))$} for $\e \in (0, 1 - \eta - q/p)$, {$q \in [1, (1-\eta)p)$}, and $\kappa < \kappa_*(\eta) - d/q$. An application of Lemma \ref{lemma: Fourier Lebesgue embedding} with $q = 1$ gives the desired a.s. regularity of $\ell$. In particular, if $H\left(2d + \frac{d+1}{\alpha + \gamma}\right) < 1$, then we find $\eta \in (0,1/2)$ close enough to $1/2$ and $\e > 0$ small enough such that $\kappa_*(\eta) - d - \e > 0$ and $1 - \frac{1}{p} - \eta - \e \geq \frac{1}{2} - \eta - \e > 0$ which shows that $\ell$ is jointly H\"older continuous. For the self-intersection time, we apply Corollary \ref{cor: selfintersection time} instead and argue similarly.  
\end{proof}

Kernels $K_b, K_{\sigma}$ that satisfy the conditions above are given in Example \ref{example: regularizing}. In our second example we consider a particular choice of $K_{\sigma}$ for which $\int_0^t K_{\sigma}(t,s)\sigma \mathrm{d}B_s$ is a multi-dimensional $q$-$\log$ Brownian motion (see \cite{MR2132395}). The latter provides an example of a $C^{\infty}$-regularising Gaussian process as recently studied in \cite{MR4342752}. Processes that are $C^{\infty}$-regularising are characterised by the property that their local time is a.s. an element of the test function space $\mathcal{D}(\R^d)$. In our second example, we construct a general class of Volterra It\^{o} processes that are $C^{\infty}$-regularising.

\begin{Example}
    In the notation of \eqref{eq: 1}, suppose that $k = d$. Fix $T \in (0,1)$, $q > 1/2$ and let 
    \[
        K_{\sigma}(t,s) = \frac{1}{|(t-s)\log(1/(t-s))^{2q}|^{1/2}} \1_{\{s \leq t\}}, \qquad (s,t) \in \Delta_T^2.
    \]
    Suppose that $\sup_{t \in [0,T]}\|b_t\|_{L^p(\Omega)} < \infty$ for some $p \in [2,\infty)$, and there exists $\gamma > 1/p$ such that  
    \[
        \int_s^t |K_b(t,r)|\, \mathrm{d}r + \int_0^s |K_b(t,r) - K_b(s,r)|\, \mathrm{d}r \leq C_T (t-s)^{\gamma}.
    \]
    {holds for all $(s,t) \in \Delta_T^2$ and some constant $C_T > 0$.} Finally, assume that $\mathrm{det}(\sigma \sigma^{\top}) > 0$. If $g$ is a.s. continuous, then $X$ given by \eqref{eq: 1} is $C^{\infty}$-regularising, i.e., its local time belongs a.s. to the test function space $\mathcal{D}(\R^d)$. Moreover, if $p > 2$, then also the self-intersection time belongs to $\mathcal{D}(\R^d)$.
\end{Example}
\begin{proof}
    As in previous example, condition (A1) is satisfied for $\gamma_b = \gamma$ and $\gamma_{\sigma} = 0$, while condition (A2) holds for $\alpha_b = 0$ and $\alpha_{\sigma} > 0$ arbitrary. Finally, since $\int_s^t K_{\sigma}(t,r)^2\, \mathrm{d}r = \frac{1}{2q - 1}\log(1/(t-s))^{1-2q}$, we see that \eqref{eq: local nondeterminism K} holds for any choice for $H > 0$ with $\rho_t^2 = \lambda_{\min}(\sigma \sigma^{\top}) > 0$. In particular, we may choose $\delta = 0$, which corresponds to the ordinary occupation measure $\ell_t$, and self-intersection measure $G_t$, see \eqref{eq: no weight}. Note that \eqref{eq: chi} holds for any choice of $\chi > 0$ and hence $\zeta = \gamma > 0$. In particular, for each $\eta \in (0,1/2)$ and each $\kappa > 0$ we find $H > 0$ sufficiently small such that $\kappa_*(\eta) > \kappa$. Thus by Theorem \ref{thm: VIto} we obtain $\ell \in C^{1 - \frac{1}{p} - \e}([0,T]; \mathcal{C}^{\kappa}(\R^d))$ a.s. for any $\e, \kappa > 0$, where we have absorbed $\eta$ into $\e$. Since $\gamma p > 1$ and $\sup_{t \in [0,T]}\|b_t\|_{L^p(\Omega)} < \infty$, an application of the Kolmogorov-Chentsov theorem shows that $t \longmapsto \int_0^t K_b(t,s)b_s\, \mathrm{d}s$ is a.s. continuous. Finally, since also $t \longmapsto \int_0^t K_{\sigma}(t,s)\sigma \mathrm{d}B_s$ is continuous (see \cite{MR2132395} Definition 18 and below), also $X$ given by \eqref{eq: 1} has continuous sample paths. Thus $\ell$ has a.s. compact support which proves the assertion. If $p > 2$, then we may apply Corollary \ref{cor: selfintersection time} and argue in the same way. 
\end{proof}

In both examples we have assumed that $\mathrm{det}(\sigma \sigma^{\top}) > 0$ so that (A3) reduces to \eqref{eq: local nondeterminism K}. Clearly, for deterministic $\sigma$ it does not {make} sense to consider the case $\mathrm{det}(\sigma \sigma^{\top}) = 0$ since by Remark \ref{remark: nondeterminism} one has $\rho_t^2 = \lambda_{\min}(\sigma \sigma^{\top}) = 0$ and hence condition (A3) becomes trivial. For non-deterministic $\rho_t$, however, cases where $\rho_t$ may vanish in certain regions of the state-space are covered and will be discussed in the next section for solutions of \eqref{eq: CVSDE}.

\section{Stochastic Volterra equations} 

\subsection{Regularity}

In this section, we consider the case where the Volterra It\^{o}-process is given by a stochastic Volterra process obtained from a continuous solution of \eqref{eq: CVSDE}. Following \cite{PROMEL2023291}, a weak $L^p$-solution with $p \in [1,\infty)$ consists of a filtered probability space with the usual conditions, an $(\F_t)_{t \in [0,T]}$-Brownian motion, and a progressively measurable process $X \in L^p(\Omega \times [0,T])$ on $\R^d$ such that 
\begin{align}\label{eq: integrability condition}
 \int_0^t |K_b(t,s)b(X_s)|\, \mathrm{d}s + \int_0^t |K_{\sigma}(t,s)\sigma(X_s)|^2\, \mathrm{d}s < \infty, \qquad t \in (0,T]
\end{align}
and \eqref{eq: CVSDE} holds $\P \otimes dt$-a.e. We say that $X$ is a continuous weak solution, if \eqref{eq: integrability condition} holds, $X$ is adapted with a.s. continuous sample paths, and \eqref{eq: CVSDE} holds a.s.. A weak solution is called strong if it is adapted to the filtration generated by the Brownian motion. 

Below we introduce our assumptions on the Volterra kernels and coefficients $b,\sigma$. For a Volterra kernel $K$ and $p \in [1,\infty)$, define 
\[
 \omega_p(t,s; K) = \left(\int_0^s \left| K(t,r) - K(s,r)\right|^p\, \mathrm{d}r \right)^{\frac{1}{p}}
        + \left(\int_s^t |K(t,r)|^p\, \mathrm{d}r\right)^{\frac{1}{p}}.
\]
Suppose that the Volterra kernels $K_b, K_{\sigma}$ satisfy the condition: 
\begin{enumerate}
 \item[(B1)] There exists $\e > 0$ such that $K_b, K_{\sigma}$ satisfy
 \[
  \sup_{t \in [0,T]}\left(\int_0^t |K_b(t,r)|^{1+\e}\, \mathrm{d}r + \int_0^t |K_{\sigma}(t,r)|^{2+\e}\, \mathrm{d}r\right) < \infty.
 \]
 Moreover, there exist $\gamma_b, \gamma_{\sigma} > 0$ and $C_T>0$ such that for all $(s,t) \in [0,T]$ with $s < t$
 \[
  \omega_1(t,s;K_b) \leq C_T (t-s)^{\gamma_b} \ \ \text{ and } \ \ 
  \omega_2(t,s;K_{\sigma}) \leq C_T (t-s)^{\gamma_{\sigma}}.
 \]

 \item[(B2)] (local-nondeterminism) There exist constants $C_*, H > 0$ and $0 \leq \sigma_* \in C^{\theta}(\R^d)$ with $\theta \in (0,1]$ and $\sigma_* \in [0,1]$ such that for all $x \in \R^d$, $t \in (0,T]$, $s \in (0,t]$, and $|\xi| = 1$
 \[
     \int_{s}^{t} | \sigma(x)^{\top}K_{\sigma}(t,r)^{\top}\xi|^2 \mathrm{d}r \geq C_* (t-s)^{2H}\sigma_*(x)^2.
 \]
\end{enumerate}

In condition (B1), the behaviour of $\omega_1, \omega_2$ on the diagonal $s = t$ determines the sample path regularity of $X$, see Proposition \ref{prop: CVSDE hoelder} and \cite[Lemma 3.1]{PROMEL2023291}. Condition (B2) is a variant of local non-determinism conditions now formulated for general stochastic Volterra processes. The term $\sigma_*$ allows for a flexible treatment of processes where the diffusion coefficient is not uniformly non-degenerate. However, if $\sigma$ is uniformly non-degenerate, then we may take $\sigma_* = 1$ and condition (B2) reduces to the local non-determinism condition \eqref{eq: local nondeterminism K}. As before, without loss of generality, we may suppose that $\sigma_* \in [0,1]$.

\begin{Proposition}\label{prop: CVSDE hoelder}
 Suppose that $b,\sigma$ are measurable with linear growth and that $K_b, K_{\sigma}$ satisfy condition (B1). Let $X$ be a continuous weak solution of \eqref{eq: CVSDE}. Then for each $p \in [2,\infty)$ with $\sup_{t \in [0,T]} \|g(t)\|_{L^p(\Omega)}< \infty$, it holds that
 \[
  \sup_{t \in [0,T]}\|X_t\|_{L^p(\Omega)} \lesssim 1 + \sup_{t \in [0,T]}\| g(t)\|_{L^p(\Omega)}.
 \]
 Moreover, for all $(s,t) \in \Delta_T^2$ 
 \[
  \| X_t - X_s \|_{L^p(\Omega)} \lesssim \| g(t) - g(s)\|_{L^p(\Omega)} + (t-s)^{\gamma_b \wedge \gamma_{\sigma}}.
 \]
 In particular, for each $\gamma \in \left(0, \gamma_b \wedge \gamma_{\sigma} - \frac{1}{p}\right)$, the process $X - g$ has a modification that belongs to $L^p(\Omega; C^{\gamma}([0,T]))$.
\end{Proposition}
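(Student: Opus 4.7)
The plan follows the three claims in order: first establish the a priori uniform $L^p$ bound on $X_t$ via a Volterra Grönwall inequality, then derive the Hölder increment estimate by an appropriate decomposition combined with Minkowski and BDG, and finally invoke the Kolmogorov-Chentsov criterion for $X-g$. The key ideas are standard for stochastic Volterra equations, but the non-convolution structure of the kernels requires a small amount of care in the Grönwall step.

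For the uniform bound, set $\psi(t) := \|X_t\|_{L^p(\Omega)}^p$. I would apply Jensen's inequality with respect to the non-negative measure $|K_b(t,r)|\mathrm{d}r$ to the drift integral, and the Burkholder-Davis-Gundy inequality (valid since $p\geq 2$) followed by Jensen with respect to $|K_\sigma(t,r)|^2\mathrm{d}r$ to the stochastic integral. Combined with the linear growth of $b,\sigma$, this yields
\[
    \psi(t) \lesssim 1 + \|g(t)\|_{L^p(\Omega)}^p + \int_0^t \bigl(|K_b(t,r)| + |K_\sigma(t,r)|^2\bigr)\,\psi(r)\,\mathrm{d}r.
\]
By condition (B1), the kernel $K(t,r) := |K_b(t,r)| + |K_\sigma(t,r)|^2$ satisfies $\int_s^t K(t,r)\,\mathrm{d}r \lesssim (t-s)^{\gamma_b} + (t-s)^{2\gamma_\sigma}$, which vanishes as $t-s\to 0$ uniformly in $s$. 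Partitioning $[0,T]$ into finitely many subintervals on each of which $\sup_t \int_s^t K(t,r)\,\mathrm{d}r \leq 1/2$ and iterating the Volterra inequality on each piece then produces the claimed uniform bound on $\psi$.

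For the Hölder increment, I would use the decomposition
\begin{align*}
    X_t - X_s &= (g(t)-g(s)) + \int_0^s \bigl[K_b(t,r)-K_b(s,r)\bigr]b(X_r)\,\mathrm{d}r + \int_s^t K_b(t,r) b(X_r)\,\mathrm{d}r \\
    &\qquad + \int_0^s \bigl[K_\sigma(t,r)-K_\sigma(s,r)\bigr]\sigma(X_r)\,\mathrm{d}B_r + \int_s^t K_\sigma(t,r)\sigma(X_r)\,\mathrm{d}B_r,
\end{align*}
and estimate each piece by the same Minkowski/BDG/Jensen machinery as above, now feeding in the uniform $L^p$ bound from the previous step as $\sup_r\|b(X_r)\|_{L^p(\Omega)} + \sup_r\|\sigma(X_r)\|_{L^p(\Omega)} < \infty$. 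This gives $\|X_t-X_s\|_{L^p(\Omega)} \lesssim \|g(t)-g(s)\|_{L^p(\Omega)} + \omega_1(t,s;K_b) + \omega_2(t,s;K_\sigma)$, which (B1) turns into $\|g(t)-g(s)\|_{L^p(\Omega)} + (t-s)^{\gamma_b\wedge\gamma_\sigma}$ once the two Hölder exponents are absorbed on the bounded interval $[0,T]$. Since $Y := X - g$ then satisfies $\|Y_t-Y_s\|_{L^p(\Omega)} \lesssim (t-s)^{\gamma_b\wedge\gamma_\sigma}$ and the threshold $\gamma < \gamma_b\wedge\gamma_\sigma - 1/p$ is exactly the Kolmogorov-Chentsov condition $p\gamma + 1 < p(\gamma_b\wedge\gamma_\sigma)$, the classical criterion provides a modification of $Y$ with sample paths in $C^\gamma([0,T])$ almost surely.

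The main obstacle is the Volterra Grönwall step: because $K(t,r)$ is not of convolution type and only satisfies $\sup_t \int_0^t K(t,r)\,\mathrm{d}r < \infty$, the classical Grönwall lemma does not apply verbatim and a naive iteration of the Volterra inequality need not converge on the entire interval $[0,T]$. What rescues the argument is the quantitative decay $\int_s^t K(t,r)\,\mathrm{d}r \to 0$ as $t-s\to 0$, uniformly in $s$, provided by (B1); this enables the finite-partition contraction scheme that lies at the technical heart of the proof. Beyond this point, the rest of the argument consists of routine applications of Minkowski's inequality, BDG, and the standard Kolmogorov-Chentsov theorem.
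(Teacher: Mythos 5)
Your treatment of the increment bound and the H\"older modification is exactly the paper's argument: the same five-term decomposition of $X_t-X_s$, Minkowski's inequality for the drift terms, BDG followed by Jensen with respect to $|K_\sigma(t,r)|^2\,\mathrm{d}r$ for the stochastic integrals, the moduli $\omega_1(t,s;K_b)$ and $\omega_2(t,s;K_\sigma)$ from (B1), and Kolmogorov--Chentsov with the threshold $\gamma<\gamma_b\wedge\gamma_\sigma-1/p$. Nothing to add there. The only divergence is the uniform $L^p$ bound, which the paper does not prove but delegates to \cite[Lemma 3.4]{PROMEL2023291}, whereas you sketch a Volterra--Gr\"onwall argument. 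Your derivation of the inequality $\psi(t)\lesssim 1+\|g(t)\|_{L^p(\Omega)}^p+\int_0^t K(t,r)\psi(r)\,\mathrm{d}r$ with $K=|K_b|+|K_\sigma|^2$ is fine, and the observation that (B1) makes $\int_s^t K(t,r)\,\mathrm{d}r$ small uniformly in $s$ is the right lever.

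The gap is in how you close that Gr\"onwall step. To ``absorb'' the term $\tfrac12\sup_{[t_{j-1},t_j]}\psi$ into the left-hand side you must already know that this supremum is finite, but the definition of a weak $L^p$ solution only gives $X\in L^p(\Omega\times[0,T])$, i.e.\ $\psi\in L^1([0,T])$, not $\sup_t\psi(t)<\infty$. The alternative of literally iterating the Volterra inequality does not rescue you for singular kernels: after $n$ substitutions the remainder is $\int_0^t K^{*n}(t,r)\psi(r)\,\mathrm{d}r$, and while $\int_{t_{j-1}}^t K^{*n}(t,r)\,\mathrm{d}r\le 2^{-n}$ on your subintervals, pairing $K^{*n}(t,\cdot)$ with a function that is merely in $L^1$ requires a pointwise bound on $K^{*n}(t,r)$, which blows up on the diagonal for, e.g., the fractional kernel with $H<1/2$. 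The standard repairs are either a localization $\tau_n=\inf\{t:|X_t|\ge n\}$ (legitimate here since the solution is assumed continuous), which makes $\psi_n(t)=\E[|X_t|^p\1_{\{t\le\tau_n\}}]$ bounded so that absorption is valid, followed by Fatou; or the resolvent-of-the-second-kind version of Gr\"onwall for Volterra kernels as in \cite[Lemma A.4]{MR4019885}. Either fix is routine, but as written your contraction scheme is circular for exactly the class of singular kernels the paper cares about.
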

\begin{proof}
The first inequality follows essentially from the proof of \cite[Lemma 3.4]{PROMEL2023291}, {see also \cite{MR2565842}.} For the second assertion write for $0 \leq s \leq t \leq T$
    \begin{align*}
        X_t - X_s &= g(t) - g(s) + \int_0^s \left( K_b(t,r) - K_b(s,r)\right)b(X_r)\, \mathrm{d}r
        + \int_s^t K_b(t,r)b(X_r)\, \mathrm{d}r
        \\ &\qquad + \int_0^s \left( K_{\sigma}(t,r) - K_{\sigma}(s,r)\right)\sigma(X_r)\, \mathrm{d}B_r
        + \int_s^t K_{\sigma}(t,r)\sigma(X_r)\, \mathrm{d}B_r.
    \end{align*}
    Hence we obtain
    \begin{align*}
        &\ \left\| \int_0^s \left( K_b(t,r) - K_b(s,r)\right)b(X_r)\, \mathrm{d}r
        + \int_s^t K_b(t,r)b(X_r)\, \mathrm{d}r \right\|_{L^p(\Omega)}
        \\ &\leq \int_0^s \left| K_b(t,r) - K_b(s,r)\right| \|b(X_r)\|_{L^p(\Omega)}\, \mathrm{d}r
        + \int_s^t |K_b(t,r)|\|b(X_r)\|_{L^p(\Omega)}\, \mathrm{d}r
        \\ &\lesssim \int_0^s \left| K_b(t,r) - K_b(s,r)\right|\,  \mathrm{d}r
        + \int_s^t |K_b(t,r)|\, \mathrm{d}r
        \\ &\lesssim (t-s)^{\gamma_b}
    \end{align*}
    where we have used the $L^p$-bound on $X$ combined with the linear growth of $b$. For the remaining two terms, we obtain from Jensen's inequality
    \begin{align*}
        &\ \left\| \int_0^s \left( K_{\sigma}(t,r) - K_{\sigma}(s,r)\right)\sigma(X_r)\, \mathrm{d}B_r
        + \int_s^t K_{\sigma}(t,r)\sigma(X_r)\, \mathrm{d}B_r\right\|_{L^p(\Omega)}^p
        \\ &\lesssim \E\left[ \left( \int_0^s \left| K_{\sigma}(t,r) - K_{\sigma}(s,r)\right|^2|\sigma(X_r)|^2\, \mathrm{d}r \right)^{\frac{p}{2}} \right]
        + \E\left[ \left( \int_s^t |K_{\sigma}(t,r)|^2|\sigma(X_r)|^2\, \mathrm{d}r\right)^{\frac{p}{2}} \right]
        \\ &\leq \left( \int_0^s \left| K_{\sigma}(t,r) - K_{\sigma}(s,r)\right|^2\, \mathrm{d}r\right)^{\frac{p}{2} - 1} \int_0^s \left| K_{\sigma}(t,r) - K_{\sigma}(s,r)\right|^2\E\left[|\sigma(X_r)|^p\right]\, \mathrm{d}r
        \\ &\qquad + \left( \int_s^t |K_{\sigma}(t,r)|^2\, \mathrm{d}r\right)^{\frac{p}{2} - 1}  \int_s^t |K_{\sigma}(t,r)|^2\E[|\sigma(X_r)|^p]\, \mathrm{d}r 
        \\ &\lesssim \left( \int_0^s \left| K_{\sigma}(t,r) - K_{\sigma}(s,r)\right|^2\, \mathrm{d}r\right)^{\frac{p}{2}} + \left( \int_s^t |K_{\sigma}(t,r)|^2\, \mathrm{d}r\right)^{\frac{p}{2}}
        \\ &\lesssim (t-s)^{p\gamma_{\sigma}}.
    \end{align*}
    This proves the desired $L^p$-bound. The H\"older continuity is a consequence of the Kolmogorov-Chentsov theorem.
\end{proof}

The next remark addresses an improvement of condition (B1) with $\varepsilon = 0$.

\begin{Remark}\label{remark: convolution}
    {It follows from \cite[Theorem 3.1]{MR2565842} and the definition of $\mathcal{K}_0$ therein, that condition (B1) be weakened to $\varepsilon = 0$ provided that  
    \[
        \limsup_{\eta \searrow 0}\sup_{t \in [0,T]} \int_t^{t+\eta}|K_b(t+\eta,r)|\, \mathrm{d}r + \limsup_{\eta \searrow 0}\sup_{t \in [0,T]} \int_t^{t+\eta}|K_{\sigma}(t+\eta,r)|^2\, \mathrm{d}r = 0.
    \]
    The latter is clearly satisfied for convolution kernels $K_b(t,r) = \widetilde{K}_b(t-r)$ and $K_{\sigma}(t,r) = \widetilde{K}_{\sigma}(t-r)$. Alternatively, for convolution kernels, a more direct proof may also be given via Young's inequality and similar arguments to \cite[Lemma 3.4]{PROMEL2023291}.} 
\end{Remark}

{
Consider the weighted occupation measure and self-intersection measures given by 
\begin{align*}
    \ell_t^{\delta}(A) &= \int_0^t \sigma_*(X_r)^{\delta} \1_{A}(X_r)\, \mathrm{d}r 
    \\ G_t^{\delta}(A) &= \int_0^t \int_0^t \sigma_*(X_{r_1})^{\delta} \sigma_*(X_{r_2})^{\delta} \1_A(X_{r_2} - X_{r_1})\, \mathrm{d}r_1 \mathrm{d}r_2,
\end{align*}
where $\delta \geq 0$. The following is our main result on the regularity of $\ell^{\delta}, G^{\delta}$, and the weighted law $\mu_t^{\delta}(A) = \E[w_*(X_t)^{\delta} \1_A(X_t)]$ for continuous solutions of \eqref{eq: CVSDE}. }

\begin{Theorem}\label{thm: CVSDE}
 Suppose that conditions (B1) and (B2) are satisfied, that $b \in C^{\beta_b}(\R^d)$, $\sigma \in C^{\beta_{\sigma}}(\R^d)$ for $\beta_b, \beta_{\sigma} \in [0,1]$, and that $g$ satisfies 
 \begin{align}\label{eq: g condition}
  \sup_{t \in [0,T]}\|g(t)\|_{L^p(\Omega)} < \infty \ \text{ and } \ 
  \| g(t) - g(s)\|_{L^p(\Omega)} \lesssim (t-s)^{\gamma_b \wedge \gamma_{\sigma}}
 \end{align}
 for some $p \in [2,\infty)$. Define $\zeta = \min\{ \beta_{b}(\gamma_b \wedge \gamma_{\sigma}) + \gamma_b, \ \beta_{\sigma}(\gamma_b \wedge \gamma_{\sigma}) + \gamma_{\sigma}\}$ and suppose that $H < \zeta$. Let $X$ be a continuous weak solution of \eqref{eq: CVSDE}. Then the following assertions hold:
 \begin{enumerate}
     \item[(a)] {Suppose that there exists $\eta \in (0,1/2)$ and $\delta \in [0,\eta/H]$ such that
 \[
    \sup_{t \in [0,T]}\E\left[ \sigma_*(X_t)^{p \left(\delta - \frac{\eta}{H}\right) } \right] < \infty.
 \]
 Define the regularity index 
 \begin{align*}
    \kappa_*(\eta) = \frac{1}{\zeta + \eta} \min\left\{ \theta (1 \wedge \delta)(\gamma_b \wedge \gamma_{\sigma}) \left(1 + \frac{\eta}{H}\right), \eta \left( \frac{\zeta}{H} - 1 \right) \right\}.
 \end{align*}
 Then for each $q \in [1,(1-\eta)p))$ and $\e \in (0, 1 - \eta - q/p)$, the weighted occupation measure of $X$ satisfies for $\kappa < \kappa_*(\eta) - d/q$
 \begin{align}\label{eq: regularity L}
   \ell^{\delta} \in L^{\frac{p}{q}}\left(\Omega, \P;\ C^{1 - \eta - \frac{q}{p} - \e}\left([0,T]; \mathcal{F}L_q^{\kappa}(\R^d)\right) \right).
 \end{align}
 If additionally $p > 2$, then the weighted self-intersection measure satisfies for $1 \leq q < (1-\eta)\frac{p}{2}$, $\varepsilon \in (0, 1 - \eta - 2q/p)$, and $\kappa < 2\kappa_*(\eta) - d/q$
 \begin{align}\label{eq: regularity G}
   G^{\delta} \in L^{\frac{p}{2q}}\left(\Omega, \P;\ C^{1 - \eta - \frac{2q}{p} - \e}\left([0,T]; \mathcal{F}L_{q}^{\kappa}(\R^d) \right) \right).
 \end{align} }

     \item[(b)] {If there exists $\delta \in [0,1]$ such that 
     \[
        \sup_{t \in [0,T]}\E\left[ \sigma_*(X_t)^{\delta-1}\right] < \infty,
     \]
     then there exists $0 \leq g_t \in L^1(\R^d)$ such that
     \begin{align}\label{eq: 7}
        \P[X_t \in \mathrm{d}x] = \1_{\{\sigma_*(x) > 0\}}g_t(x)dx + \widetilde{\mu}_t(\mathrm{d}x),
     \end{align}
     where $\widetilde{\mu}_t$ is supported on $\{ \sigma_* = 0 \}$. Moreover, there exists some $\kappa' > 0$ such that $g_t^{\delta}(x) := \sigma_*(x)^{\delta}g_t(x)$ satisfies $g_t^{\delta} \in B_{1,\infty}^{\kappa'}(\R^d)$ and $\| g_t^{\delta} \|_{B_{1,\infty}^{\kappa'}} \lesssim (1 \wedge t)^{- H}$ for $t \in (0,T]$. }
 \end{enumerate}
\end{Theorem}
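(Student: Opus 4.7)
The plan is to identify the continuous weak solution $X$ of \eqref{eq: CVSDE} as a Volterra It\^{o} process in the sense of \eqref{eq: V Ito process} with coefficients $b_t = b(X_t)$ and $\sigma_t = \sigma(X_t)$ and weight $\rho_t = \sigma_*(X_t) \in [0,1]$, and then invoke Theorem \ref{thm: VIto}, Corollary \ref{cor: selfintersection time}, and Theorem \ref{thm: VIto density} after checking the abstract hypotheses (A1)--(A3) together with \eqref{eq: chi} for this choice.

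Condition (A1) is inherited directly from (B1) with the same exponents $\gamma_b, \gamma_\sigma$. For (A3), substituting the $\F_s$-measurable point $x = X_s$ into (B2) gives the lower bound with $\rho_s = \sigma_*(X_s)$. The main verification concerns (A2): Proposition \ref{prop: CVSDE hoelder} under \eqref{eq: g condition} supplies $\|X_t - X_s\|_{L^p(\Omega)} \lesssim (t-s)^{\gamma_b \wedge \gamma_\sigma}$, and combining this with $b \in C^{\beta_b}$, $\sigma \in C^{\beta_\sigma}$ and Jensen's inequality yields
\[
 \|b(X_t) - \E[b(X_t)\mid \F_s]\|_{L^p(\Omega)} \leq 2\|b(X_t) - b(X_s)\|_{L^p(\Omega)} \lesssim (t-s)^{\beta_b(\gamma_b \wedge \gamma_\sigma)},
\]
and analogously for $\sigma$. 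Thus (A2) holds with $\alpha_b = \beta_b(\gamma_b \wedge \gamma_\sigma)$ and $\alpha_\sigma = \beta_\sigma(\gamma_b \wedge \gamma_\sigma)$, so $\zeta$ from Theorem \ref{thm: VIto} coincides with the one in the current statement. An identical estimate using $\sigma_* \in C^\theta$ produces $\|\rho_t - \rho_s\|_{L^p(\Omega)} \lesssim (t-s)^{\theta(\gamma_b \wedge \gamma_\sigma)}$, so \eqref{eq: chi} holds with $\chi = \theta(\gamma_b \wedge \gamma_\sigma) \wedge 1$; this choice reproduces the factor $\theta(\gamma_b \wedge \gamma_\sigma)$ in $\kappa_*(\eta)$. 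The moment hypothesis in (a) is precisely condition \eqref{eq: weight function bound} for this $\rho$, and Theorem \ref{thm: VIto} and Corollary \ref{cor: selfintersection time} then give \eqref{eq: regularity L} and \eqref{eq: regularity G}.

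For part (b), Theorem \ref{thm: VIto density}(b) applied with $\rho_t = \sigma_*(X_t)$ yields a density $g_t^\delta \in B_{1,\infty}^{\kappa'}(\R^d)$ of the weighted law $\mu_t^\delta(\mathrm{d}x) = \E[\sigma_*(X_t)^\delta \1_{\cdot}(X_t)]$ with the stated estimate. To derive \eqref{eq: 7}, let $\nu_t$ denote the law of $X_t$; then $\mu_t^\delta(\mathrm{d}x) = \sigma_*(x)^\delta \nu_t(\mathrm{d}x)$, so for $\delta > 0$ the measure $\mu_t^\delta$ is supported on $\{\sigma_* > 0\}$ and $g_t^\delta = 0$ a.e. on $\{\sigma_* = 0\}$. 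Defining $g_t(x) := \sigma_*(x)^{-\delta} g_t^\delta(x) \1_{\{\sigma_*(x) > 0\}}$ produces a density for $\nu_t|_{\{\sigma_* > 0\}}$ with $\int g_t\,\mathrm{d}x = \E[\sigma_*(X_t)^{-\delta}\1_{\{\sigma_*(X_t) > 0\}} \cdot \sigma_*(X_t)^\delta] = \P[\sigma_*(X_t) > 0] \leq 1$, and setting $\widetilde{\mu}_t := \nu_t|_{\{\sigma_* = 0\}}$ accounts for the remainder. The identity $g_t^\delta(x) = \sigma_*(x)^\delta g_t(x)$ then holds a.e., so the $B_{1,\infty}^{\kappa'}$-regularity and the bound $\|g_t^\delta\|_{B_{1,\infty}^{\kappa'}} \lesssim (1\wedge t)^{-H}$ are direct consequences of Theorem \ref{thm: VIto density}. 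The case $\delta = 0$ is immediate since $\mu_t^0 = \nu_t$.

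The only real work is the bookkeeping of exponents when translating (B1), (B2), and the H\"older regularity of $b, \sigma, \sigma_*$ into the Volterra It\^{o} framework; no new stochastic analysis is needed beyond the abstract results already proved in Section 3. The most delicate conceptual point is the disintegration in (b), which reduces to the measure-theoretic identity $\mu_t^\delta = \sigma_*^\delta \nu_t$ together with the vanishing of $g_t^\delta$ on $\{\sigma_* = 0\}$.
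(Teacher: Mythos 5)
Your proposal is correct and follows essentially the same route as the paper: verify (A1)--(A3) and the H\"older bound \eqref{eq: chi} for $b_t = b(X_t)$, $\sigma_t = \sigma(X_t)$, $\rho_t = \sigma_*(X_t)$ via Proposition \ref{prop: CVSDE hoelder}, then invoke Theorem \ref{thm: VIto}, Corollary \ref{cor: selfintersection time}, and Theorem \ref{thm: VIto density}. The only (cosmetic) deviation is in part (b), where you extract $g_t$ by directly dividing the density of $\mu_t^{\delta}$ by $\sigma_*^{\delta}$ on $\{\sigma_*>0\}$, whereas the paper first establishes absolute continuity of $A \mapsto \P[X_t \in A,\ \sigma_*(X_t)>0]$ via a truncation over $\{\sigma_*(X_t)\geq 1/n\}$; both arguments are valid and yield the same conclusion.
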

\begin{proof}
 Let us verify conditions (A1) -- (A3). Assertion (a) is then a consequence of Theorem \ref{thm: VIto} and Corollary \ref{cor: selfintersection time}. Condition (A1) is a particular case of (B1). Define $b_t = b(X_t)$ and $\sigma_t = \sigma(X_t)$. Then an application of Proposition \ref{prop: CVSDE hoelder} gives  
 \[
    \|b(X_t) - \E[b(X_t) \ | \ \F_s] \|_{L^p(\Omega)} \leq 2 \|b(X_t) - b(X_s)\|_{L^p(\Omega)} \lesssim \|X_t - X_s\|^{\beta_b} \lesssim (t-s)^{\beta_b(\gamma_b \wedge \gamma_{\sigma})}
 \]
 and analogously $\|\sigma(X_t) - \sigma(X_s)\|_{L^p(\Omega)} \lesssim (t-s)^{\beta_{\sigma}(\gamma_b \wedge \gamma_{\sigma})}$. Thus condition (A2) holds for $\alpha_b = \beta_b (\gamma_b \wedge \gamma_{\sigma})$ and $\alpha_{\sigma} = \beta_{\sigma}(\gamma_b \wedge \gamma_{\sigma})$. By assumption (B2), it follows that condition (A3) holds for $\rho_s = \sigma_*(X_s)$. Then \eqref{eq: weight function bound} holds by assumption, and setting $\chi = (\gamma_b \wedge \gamma_{\sigma})\theta$, we find 
 \begin{align*}
     \|\rho_t - \rho_s\|_{L^{p}(\Omega)} 
     \lesssim \|\sigma_*(X_t) - \sigma_*(X_s)\|_{L^p(\Omega)}
     \lesssim \|X_t - X_s \|_{L^p(\Omega)}^{\theta}
     \lesssim (t-s)^{\chi},
 \end{align*}
 where we have used Proposition \ref{prop: CVSDE hoelder}. Hence, Theorem \ref{thm: VIto} and Corollary \ref{cor: selfintersection time} are applicable, which completes the proof of assertion (a). 
 
 {Concerning assertion (b), let $\mu_t^{\delta}(A) = \E[ \sigma_*(X_t)^{\delta} \1_A(X_t)]$. An application of Theorem \ref{thm: VIto density} shows that $\mu_t^{\delta}$ is absolutely continuous to the Lebesgue measure, and that its density $\widetilde{g}_t^{\delta}$ belongs to $B_{1,\infty}^{\kappa'}(\R^d)$ for some $\kappa' > 0$ and satisfies $\| \widetilde{g}_t^{\delta}\|_{B_{1,\infty}^{\kappa'}} \lesssim (1\wedge t)^{-H}$. For $n \geq 1$, and $A \in \mathcal{B}(\R^d)$ with Lebesgue measure zero, we find
 \[
    \P[ X_t \in A, \ \sigma_*(X_t) \geq 1/n ] \leq n^{1-\delta} \E[ \sigma_*(X_t)^{\delta - 1} \1_A(X_t) ] = n^{1-\delta} \mu_t^{\delta}(A) = 0.
 \]
 Letting $n \to \infty$ shows that $A \longmapsto \P[ X_t \in A, \ \sigma_*(X_t) > 0 ]$ is absolutely continuous with respect to the Lebesgue measure. Let $0 \leq g_t \in L^1(\R^d)$ be its density on $\{ \sigma_*(x) > 0 \}$ with $g_t \equiv 0$ on $\{ \sigma_*(x) = 0\}$. Then the representation \eqref{eq: 7} holds with $\widetilde{\mu}_t(A) = \E[ \1_{\{\sigma_*(X_t) = 0\}} \1_A(X_t)]$ and $\E[ \1_{\{\sigma_*(X_t) > 0\}} \1_A(X_t)] = \int_A g_t(x)\mathrm{d}x$. Finally, noting that $g_t^{\delta} = \widetilde{g}_t^{\delta}$, proves the desired regularity. }
\end{proof}

The choice $\beta_b = 0$ or $\beta_{\sigma} = 0$ corresponds to case where $b$ or $\sigma$, respectively, is only measurable with linear growth. Indeed, in such a case condition (A2) still holds true with $\alpha_b = 0$ or $\alpha_{\sigma} = 0$, respectively. {For the standard occupation measure and self-intersection measure, which corresponds to $\delta = 0$, we obtain, in view of Remark \ref{remark: delta zero}, the regularity index 
\[
    \kappa_*(\eta) = \frac{\eta}{\eta + \zeta}\left( \frac{\zeta}{H} - 1 \right).
\]
}

Finally, let us consider the two examples of stochastic Volterra processes below that illustrate these results. The first example addresses additive noise.

\begin{Example}
    Consider the equation
    \begin{align*}
    X_t = x_0 + \int_0^t \frac{(t-s)^{H-1/2}}{\Gamma(H + 1/2)}b(X_s)\, \mathrm{d}s + \int_0^t \frac{(t-s)^{H-1/2}}{\Gamma(H+1/2)}\, \mathrm{d}B_s
    \end{align*}
    where $H > 0$ and $b \in C^{\beta}(\R^d)$ for some $\beta \in [0,1]$. Then {we may take $\sigma_* \equiv 1$ and Theorem \ref{thm: CVSDE} is applicable with $\gamma_b = H+\frac{1}{2}$, $\gamma_{\sigma} = H$, $\zeta = H(1+\beta) + \frac{1}{2} > H$, $\delta = 0$, and yields the following regularity index for the occupation and self-intersection measures
    \[
        \kappa_*(\eta) = \frac{\eta}{\eta + H(1+\beta) + \frac{1}{2}} \left( \beta + \frac{1}{2H} \right). 
    \] 
    In particular, $\mu_t(A) = \P[X_t \in A]$ is absolutely continuous with respect to the Lebesgue measure, and its density $g_t$ satisfies $g_t \in B_{1,\infty}^{\kappa}(\R^d)$ for some $\kappa > 0$. Remark that $\kappa_*(\eta) \nearrow \infty$ as $H \searrow 0$, i.e. the local time is smooth whenever the fractional noise is sufficiently rough.}
\end{Example}

In our second example, we consider a variant of \eqref{eq: Volterra CIR} in which $\sqrt{x}$ is replaced by the power function $x^{\theta}$. The latter serves as a natural example for a diffusion coefficient that is degenerate at the boundary and merely H\"older continuous. 

\begin{Example}
    Consider in dimension $d = 1$ the equation
    \[
        X_t = x_0 + \int_0^t \frac{(t-s)^{H-1/2}}{\Gamma(H + \frac{1}{2})} (b + \beta X_s)\, \mathrm{d}s + \int_0^t \frac{(t-s)^{H-1/2}}{\Gamma(H + \frac{1}{2})} X_s^{\theta}\, \mathrm{d}B_s
    \]
    on $\R_+$, where $\theta \in [1/2, 1]$, $b \geq 0$ and $\beta \in \R$. This equation has a nonnegative, continuous weak solution due to \cite{MR4019885}. Theorem \ref{thm: CVSDE}.(a) is applicable {with $\gamma_b = H + \frac{1}{2}, \gamma_{\sigma} = H, \zeta = H(1+\theta)$, $\delta = \eta/H$, $\eta \in (0,1/2)$, and the weight function $\sigma_*(x) = 1 \wedge x^{\theta}$. This gives the following assertions:}
    \begin{enumerate}
        \item[(a)] The law of $X_t$ satisfies
        \[
            \P[ X_t \in \mathrm{d}x] = g_t(x)\mathrm{d}x + \P[X_t = 0]\delta_0(\mathrm{d}x),
        \]
        where $0 \leq g_t \in L^1(\R_+)$ is such that $g_t^{\theta}(x) = \1_{\R_+}(x)(1 \wedge x^{\theta})g_t(x)$ belongs to $B_{1,\infty}^{\kappa}(\R)$ for some $\kappa > 0$. 
    
        \item[(b)] The weighted occupation and self-intersection measure satisfy $\ell_t^{\eta/H}, G_t^{\eta/H} \in \mathcal{F}L_q^{\kappa}(\R) \subset H_{q/(q-1)}^{\kappa}(\R)$ a.s. for $\kappa < \kappa_*(\eta) - 1/q$, with $q \in [1,\infty)$ and the regularity index 
        \[
            \kappa_*(\eta) = \frac{1}{H(1+\theta) + \eta} \min\left\{ \theta (1\wedge \eta/H) H (1 + \eta/H),\ \eta \theta \right\} = \frac{\eta\theta}{H(1+\theta) + \eta}
        \]
        since $\eta \leq (1\wedge \eta/H)(H + \eta)$. In particular, since $q$ is arbitrary, we may take it sufficiently large such that $\kappa_*(\eta) > 1/q$. Hence $\ell_t^{\eta/H}, G_t^{\eta/H}$ admit a density in $H_{q/(q-1)}^{\kappa}(\R) \subset L^{1 + \frac{1}{q-1}}(\R)$ with $\kappa \in (0,\kappa_*(\eta) - 1/q)$.  
    \end{enumerate}
\end{Example}

In this example we have $\kappa_*(\eta) \nearrow \theta$ as $H \searrow 0$. Hence, the regularity index is bounded above due to the degeneracy at the boundary caused by the diffusion coefficient $\sigma(x) = \1_{\R_+}(x)x^{\theta}$. Finally, our results also apply to equations of the form 
\begin{align*}
    X_t &= g_X(t) + \int_0^t K_b^X(t,s)b^X(X_s, V_s)\, \mathrm{d}s
    \\ V_t &= g_V(t) + \int_0^t K_b^V(t,s)b^V(X_s, V_s)\, \mathrm{d}s + \int_0^t K_{\sigma}(t,s)\sigma(X_s, V_s)\, \mathrm{d}B_s
\end{align*}
where $K_b^X, K_b^V, K_{\sigma}$ are non-anticipating Volterra kernels. Such equations appear, e.g., in the study of Hamiltonian dynamics where $X$ denotes the position and $V$ the velocity/momentum of particles in the system. For such equations, we may apply our results to the Volterra It\^{o}-process $V$.

\subsection{Finite dimensional distributions}

The absolute continuity and regularity of the one-dimensional law of solutions of \eqref{eq: CVSDE} is studied in Theorem \ref{thm: CVSDE}. In this section, we focus on the absolute continuity of the finite-dimensional distributions for convolution-type equations of the form 
\begin{align}\label{eq: CVSDE convolution}
     X_t &= g(t) + \int_0^t k_b(t-s)b(X_s)\, \mathrm{d}s + \int_0^t k_{\sigma}(t-s)\sigma(X_s)\, \mathrm{d}B_s.
\end{align}
Here $k_b \in L^1_{\mathrm{loc}}(\R_+; \R^{d \times d})$ and $k_{\sigma} \in L_{\mathrm{loc}}^2(\R^d; \R^{d \times k})$, $b: \R^d \longrightarrow \R^d$, $\sigma: \R^d \longrightarrow \R^{k \times m}$, and $B$ an $m$-dimensional standard Brownian motion. While for Markov processes, absolute continuity of finite-dimensional distributions can be directly derived from the absolute continuity of their one-dimensional laws via the Chapman-Kolmogorov equations, such an approach is not directly applicable to stochastic Volterra processes. Below, we use Markovian lifts for \eqref{eq: CVSDE} to derive a lifted form of the Chapman-Kolmogorov equations sufficient for the absolute continuity of time-marginals. Let us suppose that the following set of conditions hold:
\begin{enumerate}
    \item[(C1)] $k_b \in L^1_{\mathrm{loc}}(\R_+; \R^{d \times d})$ and $k_{\sigma} \in L_{\mathrm{loc}}^2(\R^d; \R^{d \times k})$ are absolutely continuous on $(0,\infty)$, and there exists $\eta^* \in [0,2)$ with
    \[
        \int_0^{\infty} \left( |k_b'(t)|^2 + |k_{\sigma}'(t)|^2\right) (1\wedge t)^{\eta^*}\, \mathrm{d}t < \infty.
    \]
    \item[(C2)] Let $g: \R_+ \times \Omega \longrightarrow \R^d$ be $\mathcal{B}(\R_+) \otimes \mathcal{F}_0$-measurable, a.s. absolutely continuous on $(0,\infty)$, and there exist $\eta_g \in [0,1)$, $\eta_g \leq \eta^*$, and $p > 2$ with 
  \[
    \frac{1}{p} + \frac{\eta^*}{2} < \frac{1+\eta_g}{2}
  \]
  and
  \[
        \E\big[|g(1)|^{p}\big] + \E\left[ \left(\int_{0}^{\infty} |g'(t)|^2\hspace{0.02cm} (1 \wedge t)^{ \eta_g}\, \mathrm{d}t \right)^{p/2} \right] < \infty.
  \]
  \item[(C3)] $b: \R^d \longrightarrow \R^d$ and $\sigma: \R^d \longrightarrow \R^{k \times m}$ are  globally Lipschitz continuous.
\end{enumerate}

Let us first argue that \eqref{eq: CVSDE convolution} has a unique continuous strong solution. Indeed, by assumption (C1), an application of the mean-value theorem shows that 
\begin{align}\label{eq: k bound}
    &\ \int_0^T |k_b(t+h) - k_b(t)|\, \mathrm{d}t + \int_0^h |k_b(t)|\, \mathrm{d}t \lesssim h^{(\gamma+\frac{1}{2})\wedge 1},
    \\ &\int_0^T |k_{\sigma}(t+h) - k_{\sigma}(t)|^2 \, \mathrm{d}t + \int_0^h |k_{\sigma}(t)|^2\, \mathrm{d}t \lesssim h^{(2\gamma) \wedge 1} \notag
\end{align}
holds for $\gamma = 1 - \frac{\eta^*}{2}$. If $\eta^* \in [0,1)$ and additionally $k_b(0)=0$, $k_{\sigma}(0)=0$, then the exponents can be strengthened to $\gamma + 1/2$ and $2\gamma$. Likewise, by assumption (C2), we find 
\[
    \sup_{t \in [0,T]}\E[ |g(t)|^p ] < \infty \ \text{ and } \ \|g(t) - g(s)\|_{L^p(\Omega)} \lesssim (t-s)^{\frac{1 - \eta_g}{2}}.
\]
Hence, since $b,\sigma$ are globally Lipschitz continuous by (C3), the existence of a unique strong solution is clear. By Proposition \ref{prop: CVSDE hoelder} and Remark \ref{remark: convolution}, this solution admits a modification with continuous sample paths.

Following \cite[Section 6]{BBCF25}, let us construct a Markovian lift for \eqref{eq: CVSDE convolution}. For $\eta \geq 0$ let $\mathcal{H}_{\eta}$ be the space of all absolutely continuous functions $y: \R_+ \longrightarrow \R^d$ with finite norm
\[
 \|y\|_{\eta}^2 = |y(1)| + \int_0^{\infty} |y'(t)|^2 (1\wedge t)^{\eta}\, \mathrm{d}t < \infty.
\]
The shift semigroup $(S(t))_{t \geq 0}$ defined by $S(t)y(x) = y(x+t)$ is strongly continuous on $\mathcal{H}_{\eta}$, and $y(0) := \Xi y = y(1) - \int_0^1 y'(t)\, \mathrm{d}t$ is a bounded linear operator on $\mathcal{H}_{\eta}$ whenever $\eta \in [0,1)$. Define 
\[
    \mathcal{H} = \mathcal{H}_{\eta^*}, \ \mathcal{V} = \mathcal{H}_{\eta_g}, \ \rho = \frac{\eta^* - \eta_g}{2}.
\]
By \cite[Lemma 6.1]{BBCF25} combined with the assumption $\eta_g \leq \eta^* < 1 + \eta_g$, we find $\|S(t)\|_{L(\mathcal{H}, \mathcal{V})} \lesssim 1 + t^{-\rho}$ for $t > 0$ and $\Xi \in L(\mathcal{V}, \R^d)$. With this choice of $\mathcal{V}, \mathcal{H}$, let us define bounded linear operators $\xi_b, \xi_{\sigma}: \R^d \longrightarrow \mathcal{H}$ by $(\xi_b v)(t) = k_b(t)v$ and $(\xi_{\sigma} v)(t) = k_{\sigma}(t)v$, where $v \in \R^d$ and $t > 0$. Consider the corresponding abstract Markovian lift of \eqref{eq: CVSDE convolution} given by the following stochastic equation on $\mathcal{V}$:
\begin{align}\label{eq: abstract markovian lift}
    \mathcal{X}_t = S(t)g + \int_0^t S(t-s)\hspace{0.02cm}\xi_b\hspace{0.02cm} b(\Xi \mathcal{X}_s)\, \mathrm{d}s + \int_0^t S(t-s)\hspace{0.02cm}\xi_{\sigma} \hspace{0.02cm}\sigma(\Xi \mathcal{X}_s)\, \mathrm{d}B_s,
\end{align}
where $g \in L^p(\Omega, \mathcal{F}_0, \P; \mathcal{V})$ by assumption (C2). Finally, noting that $\frac{1}{p} + \rho < \frac{1}{2}$ and that $b,\sigma$ are Lipschitz continuous, it follows from \cite[Section 2]{BBCF25} that \eqref{eq: abstract markovian lift} has a unique solution $\mathcal{X}$ in $\mathcal{V}$ with continuous sample paths. In view of $g(\cdot) = \Xi S(\cdot)\xi$, $k_b(\cdot) = \Xi S(\cdot)\xi_b$, and $k_{\sigma}(\cdot) = \Xi S(\cdot)\xi_{\sigma}$, it is easy to see that $\Xi \mathcal{X} = X$. The following is our main result on the absolute continuity of finite-dimensional distributions of \eqref{eq: CVSDE convolution}

\begin{Theorem}\label{thm: fdd}
 Suppose that conditions (C1), (C2), (C3) are satisfied, and that there exists $H \in (0,1)$ and a constant $C_* > 0$ such that for each $h \in (0,1)$ and $\xi \in \R^d$ with $|\xi| = 1$
 \begin{align}\label{eq: LD}
        \int_0^h |k_{\sigma}(t)^{\top}\xi|^2\, \mathrm{d}t \geq C_* h^{2H}.
 \end{align}
 If $H + \eta^* < 2$, and there exists $\delta \in [0,1]$ such that for $\sigma_*(x) = 1 \wedge \lambda_{\min}(\sigma(x)\sigma(x)^{\top})$
 \[
    \sup_{t \in [0,T]}\E[ \sigma_*(X_t)^{\delta - 1}] < \infty,
 \]
 then for all $0 < t_1 < \dots < t_n \leq T$, the measure
 \[
    \mu_{t_1,\dots, t_n}^{\delta}(A) = \P\left[ (X_{t_1},\dots, X_{t_n}) \in A \cap \Gamma^{(n)} \right], \qquad A \in \mathcal{B}((\R^d)^n)
 \]
 is absolutely continuous with respect to the Lebesgue measure, where  
 \[
    \Gamma^{(n)} := \left\{ (x_1,\dots, x_n) \in (\R^d)^n \ : \ \prod_{j=1}^n \sigma_*(x_j) > 0  \right\}.
 \]
\end{Theorem}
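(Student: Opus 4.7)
The plan is to proceed by induction on $n$, the case $n=1$ being precisely Theorem \ref{thm: CVSDE} (b). The idea for the inductive step is to exploit the Markov property of the abstract lift $\mathcal{X}$ in $\mathcal{V}$ to reduce the absolute continuity of the $n$-dimensional marginal of the non-Markovian process $X$ to a conditional one-dimensional problem that can be handled by Theorem \ref{thm: CVSDE} (b) applied to a restarted Volterra equation.

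For the inductive step from $n-1$ to $n$, I would first construct a regular conditional distribution of $X_{t_n}$ given $\mathcal{F}_{t_{n-1}}$ via the Markov property of $\mathcal{X}$. Using the identity $\Xi S(r) y = y(r)$ for $y \in \mathcal{V}$, the restarted process $(X_{t_{n-1}+r})_{r \geq 0}$, conditionally on $\{\mathcal{X}_{t_{n-1}} = y\}$, solves a stochastic Volterra equation of the same form as \eqref{eq: CVSDE convolution} with the same kernels and coefficients but with deterministic initial function $\tilde g_y(r) := y(r)$. I would then verify that, for $\P$-a.e.\ $y = \mathcal{X}_{t_{n-1}}$, this restarted equation falls within the scope of Theorem \ref{thm: CVSDE} (b): conditions (B1), (B2), the Lipschitz/linear-growth of $b,\sigma$, the local non-determinism \eqref{eq: LD}, and the moment bound $\sup_t\E[\sigma_*(X_t)^{\delta-1}]<\infty$ are all preserved under shifting the time origin. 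The required H\"older regularity and $L^p$-integrability of $\tilde g_y$ follow from $\mathcal{X}_{t_{n-1}} \in \mathcal{V}=\mathcal{H}_{\eta_g}$ by the Cauchy-Schwarz estimate $|y(r)-y(r')|\leq \|y\|_{\eta_g}|r-r'|^{(1-\eta_g)/2}/\sqrt{1-\eta_g}$ combined with the finiteness of $\E\|\mathcal{X}_{t_{n-1}}\|_{\mathcal{V}}^p$ from the solution theory for \eqref{eq: abstract markovian lift}. Theorem \ref{thm: CVSDE} (b) then delivers, for $\P$-a.e.\ $\omega$, a density $q_\omega \in L^1(\R^d)$ for the conditional law of $X_{t_n}$ given $\mathcal{F}_{t_{n-1}}$, supported on $\{\sigma_* > 0\}$.

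To conclude, for a Borel null set $A \subset (\R^d)^n$, I would exploit the $\mathcal{F}_{t_{n-1}}$-measurability of $(X_{t_1},\dots,X_{t_{n-1}})$ to write
\[
    \mu^\delta_{t_1,\dots,t_n}(A) = \E\left[\1_{\Gamma^{(n-1)}}(X_{t_1},\dots,X_{t_{n-1}})\int_{\R^d}\1_A(X_{t_1},\dots,X_{t_{n-1}},y)\1_{\{\sigma_*(y)>0\}}q_{\omega}(y)\,\mathrm{d}y\right].
\]
By Fubini, the slice $A_{x_1,\dots,x_{n-1}}$ has $d$-dimensional Lebesgue measure zero for Lebesgue-a.e.\ $(x_1,\dots,x_{n-1})$; the inductive hypothesis upgrades this to $\P$-a.e.\ on the event $\{(X_{t_1},\dots,X_{t_{n-1}})\in\Gamma^{(n-1)}\}$, whence the inner integral vanishes and $\mu^\delta_{t_1,\dots,t_n}(A)=0$. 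The main obstacle I anticipate lies in the second step: since the kernels $k_b,k_\sigma$ are singular at the origin, matching the H\"older exponent $\gamma_b\wedge\gamma_\sigma$ demanded by \eqref{eq: g condition} for $\tilde g_y$ with a constant of finite $p$-th moment in $y$ requires a delicate parameter balance. The compatibility inequality $1/p+\eta^*/2<(1+\eta_g)/2$ from (C2), the constraint $\eta_g\leq\eta^*$, and the standing assumption $H+\eta^*<2$ are precisely calibrated for this purpose, possibly after a localisation over $\{\|\mathcal{X}_{t_{n-1}}\|_\mathcal{V}\leq R\}$ and a subsequent limit $R\to\infty$.
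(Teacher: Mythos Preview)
Your proposal is correct and follows essentially the same strategy as the paper: induction on $n$, the Markov property of the lift $\mathcal{X}$ to reduce to the one-dimensional base case (Theorem~\ref{thm: CVSDE}(b)) applied to the restarted equation, then Fubini plus the induction hypothesis. The paper organises the conditioning slightly differently---via a two-stage disintegration of the transition kernel through $p_t^1(z,\mathrm{d}x)=\P_z[\Xi\mathcal{X}_t\in\mathrm{d}x]$ and the conditional law of $\mathcal{X}_{t_{n-1}}$ given $(X_{t_1},\dots,X_{t_{n-1}})$, rather than conditioning directly on $\mathcal{F}_{t_{n-1}}$---but the substance is identical, and the technical obstacle you flag (checking the hypotheses for the restarted initial datum) is indeed the crux, which the paper handles more tersely.
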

\begin{proof}
 It suffices to show that
 \begin{align}\label{eq: assertion n}
    \E[ \1_{\Gamma^{(n)}}(X_{t_1},\dots, X_{t_n}) f(X_{t_1}, \dots, X_{t_n})] = 0
 \end{align}
 holds for each measurable function $f: (\R^d)^n \longrightarrow \R_+$ that satisfies $f \equiv 0$ a.e. with respect to the Lebesgue measure. We prove the assertion by induction over $n \in \N$. For $n = 1$, assumption (B1) holds by \eqref{eq: k bound} with $\gamma_b = (\gamma + \frac{1}{2})\wedge 1$ and $\gamma_{\sigma} = \gamma \wedge \frac{1}{2}$, while (B2) is satisfied by \eqref{eq: LD}. This gives $\zeta = 2 (\gamma_b \wedge \gamma_{\sigma}) = (2\gamma) \wedge 1$. Since $\zeta > H$ due to $H \in (0,1)$ and $\eta^* + H < 2$, Theorem \ref{thm: CVSDE}.(b) yields the absolute continuity of $\mu_t(A) = \P[X_t \in A \cap \Gamma^{(1)}]$. This proves \eqref{eq: assertion n} for $n = 1$. Now suppose that \eqref{eq: assertion n} holds for $n - 1$ with some fixed $n \geq 2$ and each measurable function $f: (\R^d)^{n-1} \longrightarrow \R_+$ such that $f \equiv 0$ holds a.e. with respect to the Lebesgue measure. To prove that the assertion also holds for $n$, we employ the Markovian lift $\mathcal{X}$ defined by \eqref{eq: abstract markovian lift}. 
 
 It follows from \cite[Theorem 2.4, Corollary 2.6]{BBCF25} that \eqref{eq: abstract markovian lift} admits a unique solution $\mathcal{X} \in L^p(\Omega, \P; C(\R_+; \mathcal{V}))$ that is a $C_b(\mathcal{V})$-Feller process and satisfies $X = \Xi \mathcal{X}$. Let $p_t(z, \mathrm{d}y)$ be its transition probabilities on $\mathcal{V}$. To shorten the notation, let $\widetilde{f}(x_1,\dots, x_n) = \1_{\Gamma^{(n)}}(x_1, \dots, x_n) f(x_1, \dots, x_n)$. The Markov property for path-dependent functionals applied to $(z_1,\dots, z_n) \longmapsto \widetilde{f}(\Xi z_1, \dots, \Xi z_n)$ yields
 \begin{align*}
     \E[\widetilde{f}(X_{t_1},\dots, X_{t_n})] 
     &= \E\left[ \E\left[ \widetilde{f}(\Xi \mathcal{X}_{t_1}, \dots, \Xi \mathcal{X}_{t_n}) \ | \ \mathcal{F}_{t_{n-1}}\right] \right]
     \\ &= \E\left[ \int_{\mathcal{V}} \widetilde{f}(\Xi\mathcal{X}_{t_1},\dots, \Xi \mathcal{X}_{t_{n-1}}, \Xi z) p_{t_n - t_{n-1}}(\mathcal{X}_{t_{n-1}}, \mathrm{d}z) \right]
     \\ &= \E\left[ G(X_{t_1}, \dots, X_{t_{n-1}}, \mathcal{X}_{t_{n-1}}) \right],
 \end{align*}
 where the function $G$ is defined by
 \[
    G(x_1,\dots, x_{n-1}, z_{n-1}) = \int_{\mathcal{V}} \widetilde{f}(x_1,\dots, x_{n-1}, \Xi z) p_{t_n - t_{n-1}}(z_{n-1}, \mathrm{d}z).
 \]
 By the disintegration of measures, we may write, for any measurable function $F: \mathcal{V} \longrightarrow \R_+$, 
 \begin{align*}
  \int_{\mathcal{V}}F(y)\, p_t(z,\mathrm{d}y) 
  = \int_{\R^d}\int_{\mathcal{V}} F(y)p_t^{0}(z, x , \mathrm{d}y)p_t^1(z,\mathrm{d}x),
 \end{align*}
  where $p_t^1(z,\mathrm{d}x) = \P_z[ \Xi \mathcal{X}_t \in \mathrm{d}x]$ denotes the marginal of $\mathcal{X}_t$ with respect to the projection $\Xi$, and $p_t^0(z,x, \mathrm{d}y) = \P_z[ \mathcal{X}_t \in \mathrm{d}y \ | \ \Xi \mathcal{X}_t = x]$ denotes the law of $\mathcal{X}_t$ conditional on $\Xi \mathcal{X}_t = x$. Hence we obtain
 \begin{align} \notag
     G(x_1,\dots, x_{n-1}, z_{n-1}) &= \int_{\R^d}\int_{\mathcal{V}} \widetilde{f}(x_1,\dots, x_{n-1}, \Xi z_n) p_{t_n - t_{n-1}}^{0}(z_{n-1}, x_n, \mathrm{d}z_n)p_{t_n - t_{n-1}}^1(z_{n-1},\mathrm{d}x_n) 
     \\ &= \int_{\R^d} \widetilde{f}(x_1,\dots, x_{n-1}, x_n) p_{t_n - t_{n-1}}^1(z_{n-1},\mathrm{d}x_n) \label{eq: G rep}
 \end{align}
 since $p_{t_n - t_{n-1}}^0(z_{n-1}, x_n, \mathrm{d}z_n)$ is supported on $\{y \in \mathcal{V} \ : \ \Xi y = x_n\}$ and $p_{t_n - t_{n-1}}^0(z_{n-1}, x_n, \mathcal{V}) = 1$ for $p_t^1(z,\cdot)$-a.a. $x_n$ by definition of regular conditional distributions. Let us denote by 
 \[
    \P\left[ \mathcal{X}_{t_{n-1}} \in \mathrm{d}z_{n-1} \ | X_{t_1} = x_1, \dots, X_{t_{n-1}} = x_{n-1} \right]
 \]
 the regular conditional distribution of $\mathcal{X}_{t_{n-1}}$ given $X_{t_1} = x_1, \dots, X_{t_{n-1}} = x_{n-1}$. Define
 \begin{align*}
    &\ h(x_1,\dots, x_{n-1}) 
    \\ &\qquad = \int_{\mathcal{V}} \left( \int_{\R^d}\widetilde{f}(x_1,\dots, x_n) p_{t_n - t_{n-1}}^1(z_{n-1}, \mathrm{d}x_n) \right) \P\left[ \mathcal{X}_{t_{n-1}} \in \mathrm{d}z_{n-1} \ | X_{t_1} = x_1, \dots, X_{t_{n-1}} = x_{n-1} \right].
 \end{align*}
 Then $h: (\R^d)^{n-1} \longrightarrow \R_+$ is measurable and uniquely determined up to sets of measure zero with respect to the law of $(X_{t_1}, \dots, X_{t_{n-1}})$. By the law of total expectation, the disintegration property of regular conditional probabilities, and \eqref{eq: G rep}, we find
  \begin{align*}
     \E\left[ G(X_{t_1}, \dots, X_{t_{n-1}}, \mathcal{X}_{t_{n-1}}) \right]
     &= \E \left[ \E \left[ G(X_{t_1}, \dots, X_{t_n-1}, \mathcal{X}_{t_{n-1}}) \ | X_{t_1}, \dots, X_{t_{n-1}} \right] \right ] 
     \\ &= \E[ h(X_{t_1}, \dots, X_{t_{n-1}}) ].
 \end{align*}
 Since $f \equiv 0$ a.e. by assumption, Fubini's theorem yields the existence of a Lebesgue null set $N \subset (\R^d)^{n-1}$ such that for all $(x_1, \dots, x_{n-1}) \in (\R^d)^{(n-1)} \setminus N$, the section $x_n \longmapsto f(x_1, \dots, x_n)$ vanishes a.e. with respect to the Lebesgue measure on $\R^d$. Furthermore, by the base case $n=1$, the measure $A \longmapsto p_{t_n - t_{n-1}}^1(z_{n-1}, A \cap \Gamma^{(1)})$ is absolutely continuous with respect to the Lebesgue measure with density $g_{t_n - t_{n-1}}(z_{n-1}, \cdot)$. Finally, observe that $\1_{\Gamma^{(n)}}(x_1,\dots, x_n) = \1_{\Gamma^{(n-1)}}(x_1,\dots, x_{n-1})\1_{\Gamma^{(1)}}(x_n)$ due to the multiplicative structure in the definition of $\Gamma^{(n)}$. Consequently, for all $(x_1, \dots, x_{n-1}) \in (\R^d)^{(n-1)} \setminus N$ we find
 \begin{align*}
    \int_{\R^d}\widetilde{f}(x_1,\dots, x_n) p_{t_n - t_{n-1}}^1(z_{n-1}, \mathrm{d}x_n) 
    &= \1_{\Gamma^{(n-1)}}(x_1,\dots, x_{n-1}) \int_{\Gamma^{(1)}} f(x_1,\dots, x_n) g_{t_n - t_{n-1}}(z_{n-1}, x_n)\, \mathrm{d}x_n
    \\ &= 0
 \end{align*}
 for all $z_{n-1} \in \mathcal{V}$. By definition of $h$, this implies that $h(x_1, \dots, x_{n-1}) = 0$ for almost all $(x_1, \dots, x_{n-1}) \in (\R^d)^{(n-1)}$, and hence $\E[ h(X_{t_1}, \dots, X_{t_{n-1}}) ] = 0$ by induction hypothesis. The assertion now follows from
 \begin{align*}
     \E[ \widetilde{f}(X_{t_1}, \dots, X_{t_n})] &= \E[ G(X_{t_1}, \dots, X_{t_{n-1}}, \mathcal{X}_{t_n})] = \E[ h(X_{t_1}, \dots, X_{t_{n-1}}) ] = 0.
 \end{align*}
\end{proof}

The case of fractional kernels provides a natural example for this theorem.

\begin{Example}
    Let $k_b(t) = \frac{(t-s)^{H_b - \frac{1}{2}}}{\Gamma(H_b + \frac{1}{2})}\mathrm{id}_{\R^d}$ and $k_{\sigma}(t) = \frac{(t-s)^{H_{\sigma} - \frac{1}{2}}}{\Gamma(H_{\sigma} + \frac{1}{2})}\mathrm{id}_{\R^d}$, where $H_b, H_{\sigma} \in (0,1)$. Then condition (C1) is satisfied for each $\eta^* > 2 - 2 H_b \wedge H_{\sigma}$, while condition \eqref{eq: LD} is satisfied for $H = H_{\sigma}$. In particular, $p, \eta_g$ appearing in condition (C2) shall satisfy
    \[
        1 + \frac{1}{p} < \frac{1 + \eta_g}{2} + H_b \wedge H_{\sigma}.
    \]
\end{Example}

Let us close with a few remarks on the applicability of this theorem and possible limitations. First, the Lipschitz continuity of $ b$ and $ \sigma$ is not essential. The key arguments can be applied whenever we can construct some (not necessarily unique) Markovian lift from \eqref{eq: abstract markovian lift} that has Markov transition probabilities supported on $\mathcal{V}$. Since existence for \eqref{eq: CVSDE convolution} yields the existence for \eqref{eq: abstract markovian lift}, here one may either seek for uniqueness in law as established in \cite{MR4897765}, or apply the Markov selection theorem. Likewise, the Markovian lift introduced and studied in \cite{BBCF25} applies to non-convolution kernels. Hence, the additional convolution structure assumed in \eqref{eq: CVSDE convolution} is not essential and can be removed.

\section{Self-intersecting diffusion equations}

\subsection{Nonlinear Young integration}

Let $(E, \|\cdot\|_E)$ be a Banach space. Below, we provide a summary of the constructions given in \cite{BHR23, H20} for the two-parameter case $[0,T]^2$. For $s,t \in [0,T]^2$ we let $s < t$ be the partial ordering defined by $s_1 < t_1$ and $s_2 < t_2$, and set $[s,t] = [s_1, t_1] \times [s_2, t_2]$. For a function $f: [0,T]^2 \longrightarrow E$, define
\[
    \Box_{s,t}f = f_{t_1,t_2} - f_{t_1,s_2} - f_{s_1,t_2} + f_{s_1,s_2},
\]
and when $f: [0,T]^2 \times [0,T]^2 \longrightarrow E$ let $\Box_{s,t}f = f_{(s_1,s_2), (t_1,t_2)}$. For $\alpha \in (0,1)^2$ we let $C^{\alpha}([0,T]^2; E)$ be the space of functions $f: [0,T]^2 \longrightarrow E$ such that $[f]_{C^{\alpha}([0,T]^2; E)} = [f]_{(1,0), \alpha} + [f]_{(0,1), \alpha} + [f]_{(1,1),\alpha} < \infty$ where
\begin{align*}
    [f]_{(1,0),\alpha} &= \sup_{s \neq t} \frac{\|f_{t_1,s_2} - f_{s_1,s_2}\|_E}{|t_1 - s_1|^{\alpha_1}},
    \\ [f]_{(0,1),\alpha} &= \sup_{s \neq t} \frac{\|f_{s_1,t_2} - f_{s_1,s_2}\|_E}{|t_2 - s_2|^{\alpha_2}},
    \\ [f]_{(1,1),\alpha} &= \sup_{s \neq t} \frac{\|\Box_{s,t}f\|_E}{|t_1-s_1|^{\alpha_1}|t_2 - s_2|^{\alpha_2}}.
\end{align*}
Equipped with the norm $\|f\|_{C^{\alpha}([0,T]^2; E)} = \|f\|_{\infty} + [f]_{C^{\alpha}([0,T]^2; E)}$, $C^{\alpha}([0,T]^2;E)$ becomes a Banach space. If $\alpha_1 = \alpha_2 = \alpha$, then we also write $C^{\alpha}([0,T]^2; E) = C^{(\alpha,\alpha)}([0,T]^2; E)$. 

The boundary operators are for $f: [0,T]^2 \times [0,T]^2 \longrightarrow E$ and $s < r < t$ in $[0,T]^2$ defined by
\begin{align*}
    \delta_{r_1}^1 f_{s,t} &= f_{s,t} - f_{s,(r_1,t_2)} - f_{(r_1,s_2),t},
    \\ \delta_{r_2}^2 f_{s,t} &= f_{s,t} - f_{s,(t_1,r_2)} - f_{(s_1,r_2), t}.
\end{align*}
The composition of these operators is then given by
\[
 \delta_r f = \delta_{r_1}^1 \delta_{r_2}^2 f = \delta_{r_2}^2 \delta_{r_1}^1 f.
\]
For given $\alpha \in (0,1)^2$ and $\beta \in (1,\infty)^2$ we let $C_2^{\alpha,\beta}([0,T]^2; E)$ denote the space of all functions $A: [0,T]^2 \times [0,T]^2 \longrightarrow E$ such that $A_{s,t} = 0$ when $s_1 = t_1$ or $s_2 = t_2$, and $[A]_{\alpha} + [\delta A]_{\alpha,\beta} < \infty$ where
\[
    [\delta A]_{\alpha,\beta} = [A]_{(1,0),\alpha, \beta} + [A]_{(0,1),\alpha,\beta} + [A]_{(1,1),\alpha,\beta}
\]
and the remaining terms are defined by
\begin{align*}
    [A]_{\alpha} &= \sup_{s \neq t} \frac{\|A_{s,t}\|_E}{|t_1-s_1|^{\alpha_1}|t_2 - s_2|^{\alpha_2}},
    \\ [A]_{(1,0),\alpha, \beta} &= \sup_{s < r < t} \frac{\|\delta_{r_1}^1 A_{s,t}\|_E}{|t_1 - s_1|^{\beta_1}|t_2-s_2|^{\alpha_2}},
    \\ [A]_{(0,1),\alpha, \beta} &= \sup_{s < r < t} \frac{\|\delta_{r_2}^2 A_{s,t}\|_E}{|t_1 - s_1|^{\alpha_1}|t_2-s_2|^{\beta_2}},
    \\ [A]_{(1,1),\alpha, \beta} &= \sup_{s < r < t} \frac{\|\delta_r A_{s,t}\|_E}{|t_1 - s_1|^{\beta_1}|t_2-s_2|^{\beta_2}}.
\end{align*}

The two-parameter sewing lemma states that for each $A \in C_2^{\alpha,\beta}([0,T]^2 \times [0,T]^2; E)$ satisfying $\alpha \in (0,1)^2$ and $\beta \in (1,\infty)^2$ there exists a unique $\mathcal{I}(A) \in C^{\alpha}([0,T]^2; E)$ obtained as the limit of Riemann-Stieltjes sums
\[
    \mathcal{I}(A)_{s,t} = \lim_{|\mathcal{P}_{s,t}| \to 0}\sum_{[u,v] \in \mathcal{P}_{s,t}}A_{u,v}, \qquad (s,t) \in [0,T]^2
\]
where $\mathcal{P}_{s,t}$ is a partition of $[s,t]$ by cubes $[u,v] \subset [0,T]^2$. Moreover, setting $\mathcal{I}(A)_t := \mathcal{I}(A)_{0,t}$, we find $\left( t \longmapsto \mathcal{I}(A)_t\right) \in C^{\alpha}([0,T]^2;E)$ and $\Box_{s,t}\mathcal{I}(A) = \mathcal{I}(A)_{s,t}$. Finally, there exists a constant $C = C(\alpha,\beta, T) > 0$ such that for $s,t \in [0,T]^2$ with $s < t$
\[
    \| \mathcal{I}(A)_{s,t} - A_{s,t}\|_E \leq C |t_1 - s_1|^{\alpha_1}|t_2 - s_2|^{\alpha_2}\left( |t_1 - s_1|^{\beta_1 - \alpha_1} + |t_2 - s_2|^{\beta_2 - \alpha_2}\right)[\delta A]_{\alpha,\beta}.
\]

Now let $E = \R^d$. The two-parameter sewing lemma allows us to construct two-parameter nonlinear Young integrals from their local approximations $\Box_{u,v} A(x_u)$ where $A \in C^{\gamma}([0,T]^2; C^{1+\kappa}(\R^d))$ with $\gamma \in (1/2,1]^2$, $\kappa \in (0,1]$, and $x \in C^{\beta}([0,T]^2; \R^d)$ with $\beta \in (0,1)^2$ satisfies $\gamma_i + \beta_i\kappa > 1$, see \cite{BHR23}. Below, we state a particular case essential for the study of \eqref{eq: selfintersecting Volterra diffusion process}. 

\begin{Lemma}\label{lemma two parameter nonlinear Young}
    Let $\gamma \in (1/2,1]$, $\beta \in (0,1)$, $\kappa \in (0,1]$, $A \in C^{\gamma}([0,T]^2; \mathcal{C}^{1+\kappa}(\R^d))$, $\theta \in C^{\beta}([0,T]; \R^d)$, and suppose that $\gamma + \kappa \beta > 1$. Then the following Riemann sums are convergent
    \[
        \int_0^{t_2} \int_0^{t_1} A(\mathrm{d}r, \theta_{r_2} - \theta_{r_1}) = \lim_{|\mathcal{P}_{t}| \to 0}\sum_{[u,v] \in \mathcal{P}_{s,t}} \Box_{u,v}A(\cdot, \theta_{u_2} - \theta_{u_1})
    \]
    where $\mathcal{P}_t$ denotes a partition of $[0,t]$ via cubes $[u,v]$. We have $(t_1,t_2) \longmapsto \int_0^{t_1} \int_0^{t_2} A(\mathrm{d}r, \theta_{r_2} - \theta_{r_1}) \in C^{\gamma}([0,T]^2; \R^d)$, and there exists a constant $C > 0$ such that
    \begin{align*}
         \bigg| \int_{s_2}^{t_2}\int_{s_1}^{t_1}& A(\mathrm{d}r, \theta_{r_2} - \theta_{r_1}) - \Box_{s,t} A(\cdot, \theta_{s_2} - \theta_{s_1}) \bigg| 
         \\ &\leq C \| A\|_{C_T^{\gamma}C_x^{1+\kappa}} [\theta]_{C^{\beta}([0,T])}^{1 + \kappa} (t_2-s_2)^{\gamma}(t_1-s_1)^{\gamma}\left[ |t_2 - s_2|^{\kappa \beta} + |t_1 - s_1|^{\kappa \beta} \right].
    \end{align*}
    Suppose that $A, \widetilde{A} \in C^{\gamma}([0,T]^2; \mathcal{C}^{2+\kappa}(\R^d))$ and $\theta, \widetilde{\theta} \in C^{\beta}([0,T]; \R^d)$, where $\gamma \in (1/2,1]$ and $\gamma + \kappa \beta > 1$. Then there exists a constant $C > 0$ such that the corresponding nonlinear Young integrals satisfy
    \begin{align*}
         \bigg| \int_{s_2}^{t_2}\int_{s_1}^{t_1}& A(\mathrm{d}r, \theta_{r_2} - \theta_{r_1}) - \int_{s_2}^{t_2}\int_{s_1}^{t_1}\widetilde{A}(\mathrm{d}r, \widetilde{\theta}_{r_2} - \widetilde{\theta}_{r_1}) \bigg| 
        \\ &\lesssim \max\{ [ \theta]_{C^{\beta}([0,T])}, [ \widetilde{\theta}]_{C^{\beta}([0,T])} \}  \|A - \widetilde{A}\|_{C_T^{\gamma} C_x^{2+\kappa}} (t_1-s_1)^{\gamma}(t_2-s_2)^{\gamma}
        \\ &\qquad + K\left( \|\theta - \widetilde{\theta}\|_{C^{\beta}([s_1,t_1])} + \|\theta - \widetilde{\theta}\|_{C^{\beta}([s_2,t_2])} \right)(t_1-s_1)^{\gamma}(t_2-s_2)^{\gamma}
    \end{align*}
    where the constant $K$ is given by
    \[
    K = \left( [ \theta]_{C^{\beta}([0,T])} + [ \widetilde{\theta}]_{C^{\beta}([0,T])}\right)^{1+\kappa} \max\{\|A\|_{C^{\gamma}_T C_x^{2+\kappa}},\| \widetilde{A}\|_{C_T^{\gamma}C_x^{2+\kappa}} \}.
    \]
\end{Lemma}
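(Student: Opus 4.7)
The plan is to realise both integrals in the statement as two-parameter sewings of the germs
\[
    \Gamma_{u,v} := \Box_{u,v} A(\theta_{u_2} - \theta_{u_1}),
    \qquad \widetilde{\Gamma}_{u,v} := \Box_{u,v} \widetilde{A}(\widetilde{\theta}_{u_2} - \widetilde{\theta}_{u_1}),
\]
extended by zero on degenerate rectangles where $u_i = v_i$. Since the Riemann sums in the statement equal $\sum_{[u,v]\in\mathcal{P}_t}\Gamma_{u,v}$, once I verify that $\Gamma \in C_2^{\alpha,\beta}([0,T]^2;\R^d)$ with $\alpha = (\gamma,\gamma)$ and $\beta_1 = \beta_2 = \gamma + \kappa\beta > 1$, the existence of the limit, its membership in $C^{\gamma}([0,T]^2;\R^d)$, and the first quantitative bound all follow directly from the two-parameter sewing lemma recalled above, together with the estimate $[\delta\Gamma]_{\alpha,\beta} \lesssim \|A\|_{C_T^{\gamma}C_x^{1+\kappa}}[\theta]_{C^{\beta}([0,T])}^{1+\kappa}$.

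The diagonal estimate $\|\Gamma_{u,v}\|\lesssim \|A\|_{C_T^{\gamma}C_x^0}(v_1-u_1)^{\gamma}(v_2-u_2)^{\gamma}$ is immediate. For the boundary operators, the decisive algebraic ingredient is the additivity $\Box_{s,t}A = \Box_{s,(r_1,t_2)}A + \Box_{(r_1,s_2),t}A$ of the rectangle increment, which collapses $\delta^1_{r_1}\Gamma_{s,t}$ to the single spatial difference
\[
   \delta^1_{r_1}\Gamma_{s,t} = \Box_{(r_1,s_2),t}A(\theta_{s_2}-\theta_{s_1}) - \Box_{(r_1,s_2),t}A(\theta_{s_2}-\theta_{r_1}).
\]
The Lipschitz bound from $C_x^{1+\kappa}$ combined with $|\theta_{r_1}-\theta_{s_1}|\lesssim [\theta]_{C^{\beta}}(r_1-s_1)^{\beta}$ then yields the required $(t_1-s_1)^{\gamma+\beta}(t_2-s_2)^{\gamma}$ estimate; $\delta^2_{r_2}$ is handled identically. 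Iterating the same cancellation in $\delta_r=\delta^1_{r_1}\delta^2_{r_2}$ produces the ``four-point second difference''
\[
   \delta_r \Gamma_{s,t} = [B(\theta_{s_2}-\theta_{s_1}) - B(\theta_{r_2}-\theta_{s_1})] - [B(\theta_{s_2}-\theta_{r_1}) - B(\theta_{r_2}-\theta_{r_1})],\quad B := \Box_{(r_1,r_2),t}A.
\]
I rewrite each bracket via $\nabla B$ applied to $\theta_{s_2}-\theta_{r_2}$ and then bound the resulting outer difference of $\nabla B$ by the $\kappa$-H\"older norm of $\nabla B$ evaluated at $\theta_{r_1}-\theta_{s_1}$; after collecting powers this gives
\[
   \|\delta_r\Gamma_{s,t}\| \lesssim \|A\|_{C_T^{\gamma}C_x^{1+\kappa}}[\theta]_{C^{\beta}}^{1+\kappa}(t_1-s_1)^{\gamma+\kappa\beta}(t_2-s_2)^{\gamma+\beta},
\]
and a symmetric grouping of the brackets swaps the two time exponents. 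Taking the better bound in each factor furnishes $\beta_1 = \beta_2 = \gamma+\kappa\beta > 1$, as required.

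For the stability estimate I exploit the linearity of sewing applied to $\Gamma - \widetilde{\Gamma}$ and split
\[
    \Gamma_{u,v} - \widetilde{\Gamma}_{u,v} = \Box_{u,v}(A-\widetilde{A})(\theta_{u_2}-\theta_{u_1}) + \bigl[\Box_{u,v}\widetilde{A}(\theta_{u_2}-\theta_{u_1}) - \Box_{u,v}\widetilde{A}(\widetilde{\theta}_{u_2}-\widetilde{\theta}_{u_1})\bigr].
\]
The first summand is handled by repeating the argument above with $A$ replaced by $A-\widetilde{A}$ and contributes the $\|A-\widetilde{A}\|_{C_T^{\gamma}C_x^{2+\kappa}}$ term. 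For the second summand I expand $\widetilde{A}$ to first order around a common spatial base point; the resulting $\delta_r$ involves a second spatial difference of $\nabla\widetilde{A}$, and controlling it at the sharp order $\gamma+\kappa\beta$ requires $\kappa$-H\"older regularity of $\nabla^2\widetilde{A}$, i.e.\ $\widetilde{A}\in C_x^{2+\kappa}$. This is precisely what produces the constant $K$ containing the factor $([\theta]_{C^{\beta}}+[\widetilde{\theta}]_{C^{\beta}})^{1+\kappa}$ together with the maximum of the $C_T^{\gamma}C_x^{2+\kappa}$-norms. The main obstacle throughout is the $\delta_r$ bookkeeping: naive triangle-inequality estimates for the nine terms in $\delta_r\Gamma_{s,t}$ only yield $\beta_i = \gamma$ and fail the sewing hypothesis $\beta_i > 1$, so everything hinges on first using the additivity of $\Box$ to reduce $\delta_r\Gamma$ to an explicit four-point second difference and then converting its two spatial increments into the required $(t_i-s_i)^{\gamma+\kappa\beta}$ time regularity via Taylor on $\nabla A$ (resp.\ $\nabla^2\widetilde{A}$).
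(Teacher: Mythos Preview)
Your proof is correct, but it takes a more hands-on route than the paper. The paper's proof is only a few lines: it observes that the one-parameter path $\theta$ induces a two-parameter path $\Theta_u := \theta_{u_2} - \theta_{u_1}$, checks that $\Box_{u,v}\Theta = 0$ so that $\Theta \in C^{\beta}([0,T]^2;\R^d)$ with $[\Theta]_{C^{\beta}([0,T]^2)} \leq 2[\theta]_{C^{\beta}([0,T])}$, and then simply invokes the general two-parameter nonlinear Young integral and its stability (Propositions~16 and~19 in the cited reference of Bechtold--Harang--Rana) for the germ $\Box_{u,v}A(\Theta_u)$. In other words, the paper reduces the lemma to a black-box result for generic $x \in C^{\beta}([0,T]^2)$, whereas you reprove that black-box result in this special case by directly verifying the sewing hypotheses, exploiting the additivity of $\Box$ to collapse $\delta^1_{r_1}\Gamma$ and $\delta_r\Gamma$ to explicit first- and second-order spatial differences of $A$. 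Both arguments are the same computation underneath; yours is self-contained and makes the $C_x^{1+\kappa}$ versus $C_x^{2+\kappa}$ mechanism transparent, while the paper's is shorter and emphasises that nothing beyond the existing 2D nonlinear Young theory is needed once one notices the elementary identity $\Box_{u,v}\Theta = 0$.
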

\begin{proof}
    Define the two parameter function $\Theta_{u} := \theta_{u_2} - \theta_{u_1}$, $u \in [0,T]^2$. Then $\Box_{u,v} \Theta = 0$ and hence we obtain $\Theta \in C^{\beta}([0,T]^2; \R^d)$ with $[\Theta]_{C^{\beta}([0,T]^2;\R^d)} \leq 2[ \theta]_{C^{\beta}([0,T]; \R^d)}$. The first assertion follows from \cite[Proposition 16]{BHR23} while the stability bound follows from \cite[Proposition 19]{BHR23}.
\end{proof}

\subsection{Stochastic equations with distributional self-intersections}

In this section, we study the solution theory for \eqref{eq: selfintersecting Volterra diffusion process}. Here and below, we fix a distributional drift $b \in \mathcal{S}'(\R^d)$ and treat the noise $Z$ path-by-path, i.e. we fix some realisation of $z \in C([0,T]; \R^d)$ of $Z$, and study \eqref{eq: selfintersecting Volterra diffusion process} with $Z$ replaced by one fixed choice of $z$. Denote by $G$ its two-parameter self-intersection measure defined by 
\begin{align}\label{eq: selfintersection measure two parameter}
    G_{t_1,t_2}(C) = \int_{0}^{t_2}\int_{0}^{t_1} w_{r_1}w_{r_2} \1_{C}(z_{r_2} - z_{r_1})\, \mathrm{d}r_1 \mathrm{d}r_2,
\end{align}
where $t = (t_1,t_2) \in [0,T]^2$ and $w \in L^{\infty}([0,T])$ denotes a weight function. Let us define the reflected measure by $\overline{G}_{t_1,t_2}(C) = G_{t_1,t_2}(-C)$, and set 
\begin{align}\label{eq: A}
    A^{b}(t_1,t_2, \theta) = (b \ast \overline{G}_{t_1,t_2})(\theta), \qquad \theta \in \R^d.
\end{align}
Then $A^{b}$ is a two-parameter family of functions which satisfies 
\begin{align*}
    A^{b}(t_1,t_2,\theta) = \int_0^{t_2}\int_0^{t_1}w_{r_1}w_{r_2} b(\theta + (x_{r_2} - x_{r_1}))\, \mathrm{d}r_1 \mathrm{d}r_2
\end{align*}
whenever $b$ is bounded and measurable. Denote by $\int_0^t \int_0^t A^b(\mathrm{d}r, \theta_{r_2} - \theta_{r_1})$ the nonlinear Young integral evaluated at $t = t_1 = t_2$ given by Lemma \ref{lemma two parameter nonlinear Young}. 

\begin{Definition}
    Let $b \in \mathcal{S}'(\R^d)$ and $z \in C([0,T])$ such that $A^b \in C^{\gamma}([0,T]^2; \mathcal{C}^{1+\kappa}(\R^d))$ for some $\gamma \in (1/2,1]$ and $\kappa \in (0,1]$. A solution of \eqref{eq: selfintersecting Volterra diffusion process} is a function $u = \theta + z \in C^{\gamma}([0,T]; \R^d)$ with $\gamma(1+\kappa) > 1$ such that $\theta$ solves the nonlinear Young equation
        \begin{align}\label{eq; nonlinear Young 2}
        \theta_t = u_0 + \int_0^{t} \int_0^{t} A^{b}(\mathrm{d}r, \theta_{r_2} - \theta_{r_1}).
    \end{align}
\end{Definition}

We show that for regular $b$, this definition coincides with the classical definition of solutions for \eqref{eq: selfintersecting Volterra diffusion process}. Moreover, when $A^b$ is sufficiently regular, we prove the existence and uniqueness of solutions of \eqref{eq; nonlinear Young 2}, and hence of \eqref{eq: selfintersecting Volterra diffusion process}.

\begin{Proposition}\label{thm: rough equation 2}
    Let $z \in C([0,T]; \R^d)$ and $b \in \mathcal{S}'(\R^d)$. Then the following assertions hold:
    \begin{enumerate}
        \item[(a)] If $b \in C_b(\R^d)$ and $A^{b} \in C^{\gamma}([0,T]^2; \mathcal{C}^{1+\kappa}(\R^d))$ for $\gamma \in (1/2,1]$ and $\kappa \in (0,1)$ satisfying $\gamma + \kappa > 1$, then $u \in C([0,T]; \R^d)$ is a solution of \eqref{eq: selfintersecting Volterra diffusion process} if and only if $\theta_t = u(t) - z_t$ is Lipschitz continuous and solves the nonlinear Young equation \eqref{eq; nonlinear Young 2}.

        \item[(b)] If $A^{b}\in C^{\gamma}([0,T]^2; \mathcal{C}^{2+\kappa}(\R^d))$ for $\gamma \in (1/2,1]$ and $\kappa \in (0,1)$ satisfying $\gamma\left(1 + \kappa\right) > 1$, then \eqref{eq: selfintersecting Volterra diffusion process} has a unique solution $u \in C([0,T]; \R^d)$, i.e. $\theta = u - z \in C^{\gamma}([0,T]; \R^d)$ solves the nonlinear Young equation \eqref{eq; nonlinear Young 2}.
    \end{enumerate}
\end{Proposition}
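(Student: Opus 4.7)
Since $b \in C_b(\R^d)$, the iterated Lebesgue integral in \eqref{eq: selfintersecting Volterra diffusion process} is a classical object, and substituting $X = \theta + z$ with $A^b(t_1,t_2,y) = \int_0^{t_2}\int_0^{t_1} b(y + z_{r_2}-z_{r_1})\,\mathrm{d}r_1\mathrm{d}r_2$ turns the defining Young Riemann sum into
\[
\sum_{[u,v]\in\mathcal{P}_t} \Box_{u,v}A^b(\cdot, \theta_{u_2}-\theta_{u_1}) = \sum_{[u,v]\in\mathcal{P}_t}\int_{u_2}^{v_2}\int_{u_1}^{v_1}b(\theta_{u_2}-\theta_{u_1} + z_{r_2}-z_{r_1})\,\mathrm{d}r_1\mathrm{d}r_2.
\]
On each cube the integrand differs from the exact $b(\theta_{r_2}-\theta_{r_1}+z_{r_2}-z_{r_1})$ by at most $\omega_b(\mathrm{Lip}(\theta)|v-u|)$, where $\omega_b$ is the modulus of continuity of $b$ on the compact range; summing over $\mathcal{P}_t$ yields total error $\leq t^2\omega_b(\mathrm{Lip}(\theta)|\mathcal{P}_t|)\to 0$, so the classical and Young integrals coincide. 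Since boundedness of $b$ forces any solution of either equation to satisfy $|\theta_t-\theta_s|\leq 2T\|b\|_\infty|t-s|$, the Lipschitz hypothesis comes for free, and the equivalence of \eqref{eq: selfintersecting Volterra diffusion process} and \eqref{eq; nonlinear Young 2} follows.

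\textbf{Part (b).} The plan is a Banach fixed-point argument for $\Psi(\theta)_t := u_0 + \int_0^t\int_0^t A^b(\mathrm{d}r, \theta_{r_2}-\theta_{r_1})$ on the ball $\mathcal{B}_R = \{\theta \in C^\gamma([0,T_0]; \R^d) : \theta_0 = u_0,\ [\theta]_{C^\gamma}\leq R\}$ for suitably small $T_0$, followed by iteration to cover $[0,T]$. The existence part of Lemma \ref{lemma two parameter nonlinear Young} (which only needs $A^b \in C^\gamma C^{1+\kappa}$) ensures $\Psi$ is well-defined; combining its sewing error with the pointwise bound $|\Box_{(0,0),(t,t)}A^b(\cdot,0)|\lesssim \|A^b\|_{C^\gamma C^0}\, t^{2\gamma}$ yields $[\Psi(\theta)]_{C^\gamma([0,T_0])} \lesssim (1+R^{1+\kappa})T_0^\gamma$, so for $T_0$ small the ball $\mathcal{B}_R$ is stable under $\Psi$.

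The heart of the argument is the contraction, where the stronger hypothesis $A^b \in C^\gamma C^{2+\kappa}$ enters via the stability assertion of Lemma \ref{lemma two parameter nonlinear Young}. For $\theta,\widetilde{\theta} \in \mathcal{B}_R$ and $0 \leq s < t \leq T_0$ I will decompose
\[
\bigl[\Psi(\theta)-\Psi(\widetilde\theta)\bigr]_t - \bigl[\Psi(\theta)-\Psi(\widetilde\theta)\bigr]_s = \sum_{Q \in \{[s,t]^2,\,[0,s]\times[s,t],\,[s,t]\times[0,s]\}} \!\!\int_Q \bigl(A^b(\mathrm{d}r, \theta_{r_2}-\theta_{r_1}) - A^b(\mathrm{d}r, \widetilde\theta_{r_2}-\widetilde\theta_{r_1})\bigr),
\]
bound each piece via the stability estimate by $C_R\|\theta-\widetilde\theta\|_{C^\gamma([0,T_0])}\,T_0^\gamma(t-s)^\gamma$ (using $s^\gamma \leq T_0^\gamma$ on the two mixed rectangles), and divide by $(t-s)^\gamma$ to obtain $[\Psi(\theta)-\Psi(\widetilde\theta)]_{C^\gamma([0,T_0])}\leq C_R T_0^\gamma \|\theta-\widetilde\theta\|_{C^\gamma([0,T_0])}$. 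Since $\theta - \widetilde\theta$ vanishes at $0$, this also controls $\sup|\theta-\widetilde\theta|$, so for $T_0$ small $\Psi$ is a strict contraction, yielding a unique solution on $[0,T_0]$. Because $T_0$ depends only on $R,\gamma,\kappa$ and $\|A^b\|_{C^\gamma C^{2+\kappa}}$, iteration extends the solution to $[T_0,2T_0]$, and so on: on each new subinterval the integrals over index-regions already solved for are treated as known data, and the resulting equation retains the same nonlinear Young structure. Finitely many steps cover $[0,T]$.

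Main obstacle: the principal subtlety is producing contraction in the $C^\gamma$-seminorm rather than only in the supremum norm, because Lemma \ref{lemma two parameter nonlinear Young} is phrased at the level of two-parameter rectangular increments. The rectangle decomposition above is what aligns the $(t-s)^\gamma$ factors correctly and produces the small parameter $T_0^\gamma$ on the two off-diagonal rectangles of size $s^\gamma(t-s)^\gamma$; without it one would only see a $T_0^{2\gamma}$ estimate at a fixed endpoint, insufficient for H\"older contraction. A secondary but routine check is that the extension step really produces a nonlinear Young equation of the same form on each subinterval, which follows once one groups the frozen integrands into a known driving term.
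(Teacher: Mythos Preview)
Your proposal is correct and follows essentially the same route as the paper: part (a) is proved by comparing the Young Riemann sums to the classical double integral via uniform continuity of $b$ on the compact range, and part (b) is a Banach fixed-point argument on a ball in $C^\gamma([0,T_0])$ using exactly the three-rectangle decomposition of $[0,t]^2\setminus[0,s]^2$ for both invariance and contraction, followed by iteration since $T_0$ depends only on $\|A^b\|$. The only cosmetic difference is that the paper writes out the rectangle decomposition already for the invariance step (not just the contraction), whereas you compress that to a one-line bound; since you clearly have the decomposition in hand this is not a gap.
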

\begin{proof}
    (a) Suppose that $u$ is a solution of \eqref{eq: selfintersecting Volterra diffusion process}, then $\theta = u - z$ satisfies \eqref{eq: 2}. In particular, since $b$ is bounded, it follows that $\theta$ is Lipschitz continuous. By assumption on $A^{b},$ the two-parameter nonlinear Young integral is well-defined. Define the two-parameter function
    \[
        \mathcal{A}_{t_1, t_2} = \int_0^{t_2} \int_0^{t_1} w_{r_2} w_{r_1} b( (\theta_{r_2} - \theta_{r_1}) + (z_{r_2} - z_{r_1}))\, \mathrm{d}r_2 \mathrm{d}r_1.
    \]
    Let us show that $\int_0^{t_2}\int_0^{t_1}A^{b}(\mathrm{d}r, \theta_{r_2} - \theta_{r_1}) = \mathcal{A}_{t_1,t_2}$. Let $\e > 0$. Since $\theta$ and $z$ are continuous $[0,T]$, their trajectories are compact. Hence, by continuity of $b$ we may take $|v_2 - u_2| + |v_1 - u_1|$ sufficiently small enough such that
    \begin{align*}
        |b( (\theta_{r_2} - \theta_{r_1}) + (z_{r_2} - z_{r_1})) - b( (\theta_{u_2} - \theta_{u_1}) + (z_{r_2} - z_{r_1}))| < \e
    \end{align*}
    holds for $r_1 \in [u_1, v_1]$ and $r_2 \in [u_2, v_2]$. Hence using the definition of $A^{b}(\cdot, \theta_{u_2} - \theta_{u_1})$ we obtain
    \begin{align*}
        \left|\Box_{u,v} A^{b}(\cdot, \theta_{u_2} - \theta_{u_1}) - \Box_{u,v}\mathcal{A} \right|
        \lesssim \e (v_2 - u_2)(v_1 - u_1).
    \end{align*}
    From this we obtain for a partition $\mathcal{P}_{t}$ of $[0,t] = [0,t_1] \times [0,t_2]$ 
    \begin{align*}
        \sum_{[u,v] \in \mathcal{P}_t } \left|\Box_{u,v} A^{b}(\cdot, \theta_{u_2} - \theta_{u_1}) - \Box_{u,v}\mathcal{A} \right|
        \lesssim \e \sum_{[u,v] \in \mathcal{P}_t}(v_2 -u_2)(v_1 - u_1) = \e t_1 t_2.
    \end{align*}
    Letting first the mesh size of the partition go to zero, and then $\e \searrow 0$, shows that 
    \begin{align*}
        \int_0^{t_2} \int_0^{t_1} A^{b}(\mathrm{d}r, \theta_{r_2} - \theta_{r_1}) &= \lim_{|\mathcal{P}_{t}| \to 0} \sum_{[u,v] \in \mathcal{P}_{t}} \Box_{u,v} A^{b}(\cdot, \theta_{u_2} - \theta_{u_1})
        \\ &= \lim_{|\mathcal{P}_{t}| \to 0} \sum_{[u,v] \in \mathcal{P}_{t}} \Box_{u,v}\mathcal{A}
        \\ &= \mathcal{A}_{t_1, t_2}
    \end{align*}
    where the first equality follows from the construction of the nonlinear Young integral, and the last relation follows by telescoping summation since $\mathcal{P}_{t}$ is a partition of $[0,t_1] \times [0,t_2]$ by rectangles. Hence $\theta$ is a solution of the corresponding nonlinear Young equation. The converse statement follows in the same way.

    (b) Since equation \eqref{eq; nonlinear Young 2} is evaluated along a single time parameter $t$, while the integration is over the two-parameter domain $[0,t]^2$ against the difference $\theta_{r_2} - \theta_{r_1}$, we cannot directly apply the 2D field solutions of \cite[Theorem 21]{BHR23}. Below, we show how the usual fixed-point argument can be applied to this setting. For a small time horizon $\tau \in (0,T]$, let $\mathcal{X}_\tau = C^\gamma([0,\tau]; \R^d)$ equipped with the standard H\"older norm $\| \cdot \|_{C^{\gamma}([0,\tau])}$, and let $\mathcal{B}_R$ be the closed ball of radius $R > 0$ centered at the constant path $\theta \equiv u_0$. For $\theta \in \mathcal{B}_R$, define the map $\mathcal{T}$ by
    \begin{align*}
        \mathcal{T}_t(\theta) = u_0 + \int_0^t \int_0^t A^b(\mathrm{d}r, \theta_{r_2} - \theta_{r_1}), \quad t \in [0,\tau].
    \end{align*}
    We show that there exist $R$ and $\tau$ such that $\Gamma$ maps $\mathcal{B}_R$ into itself and is a contraction on $\mathcal{B}_R$.
    
    Firstly, by the two-parameter nonlinear Young integration bounds given in Lemma \ref{lemma two parameter nonlinear Young}, for any rectangle $[u_1,v_1] \times [u_2,v_2] \subset [0,\tau]^2$, we have
    \begin{align*}
        \left| \int_{u_1}^{v_1}\int_{u_2}^{v_2} A^b(\mathrm{d}r, \theta_{r_2} - \theta_{r_1}) \right| 
        &\leq \left| \int_{u_1}^{v_1}\int_{u_2}^{v_2} A^b(\mathrm{d}r, \theta_{r_2} - \theta_{r_1}) - \Box_{u,v} A^b(\theta_{u_2} - \theta_{u_1}) \right| + | \Box_{u,v}A^b(\theta_{u_2} - \theta_{u_1})| 
        \\ &\lesssim \|A^b\|_{C^\gamma_{\tau} \mathcal{C}_x^{1+\kappa}} [\theta]_{C^{\gamma}([0,\tau])}^{1+\kappa} |v_2 - u_2|^{\gamma}|v_1 - u_1|^{\gamma} \left( |v_2 - u_2|^{\kappa \gamma} + |v_1 - u_1|^{\kappa \gamma} \right) 
        \\ &\qquad + \|A^b\|_{C_{\tau}^\gamma \mathcal{C}_x^{1+\kappa}} |v_2 - u_2|^{\gamma}|v_1 - u_1|^{\gamma}
        \\ &\lesssim \|A^b\|_{C^\gamma_{\tau} \mathcal{C}_x^{1+\kappa}}\left( 1 + 2\tau^{\gamma \kappa}[ \theta]_{C^{\gamma}([0,\tau])}^{1+\kappa} \right) |v_2 - u_2|^{\gamma}|v_1 - u_1|^{\gamma}
        \\ &\lesssim \|A^b\|_{C^\gamma_{\tau} \mathcal{C}_x^{1+\kappa}}\left( 1 + 2T^{\gamma \kappa}R^{1+\kappa}\right) |v_2 - u_2|^{\gamma}|v_1 - u_1|^{\gamma},
    \end{align*}
    where we have used $v_1 - u_1, v_2 - u_2 \leq \tau \leq T$ and that $\theta \in \mathcal{B}_R$ which gives $[ \theta]_{C^{\gamma}([0,\tau])} = [ \theta - u_0]_{C^{\gamma}([0,\tau])} \leq \| \theta - u_0\|_{C^{\gamma}([0,\tau])} \leq R$. Applying this bound to the increment $\mathcal{T}_t(\theta) - \mathcal{T}_s(\theta)$ with $0 \leq s < t \leq \tau$ gives
    \begin{align*}
        |\mathcal{T}_t(\theta) - \mathcal{T}_s(\theta)| &\leq \left| \int_{s}^{t}\int_{0}^{s} A^b(\mathrm{d}r, \theta_{r_2} - \theta_{r_1}) \right| + \left| \int_{0}^{s}\int_{s}^{t} A^b(\mathrm{d}r, \theta_{r_2} - \theta_{r_1}) \right| + \left| \int_{s}^{t}\int_{s}^{t} A^b(\mathrm{d}r, \theta_{r_2} - \theta_{r_1}) \right| 
        \\ &\lesssim \|A^b\|_{C^\gamma_{\tau} \mathcal{C}_x^{1+\kappa}}\left( 1 + 2T^{\gamma \kappa}R^{1+\kappa}\right) \Big( (t-s)^\gamma s^\gamma + s^\gamma (t-s)^\gamma + (t-s)^{2\gamma} \Big)
        \\ &\leq \|A^b\|_{C^\gamma_{\tau} \mathcal{C}_x^{1+\kappa}}\left( 1 + 2T^{\gamma \kappa}R^{1+\kappa}\right) (t-s)^{\gamma} 3 \tau^\gamma
    \end{align*}
    since $s \leq \tau$ and $(t-s) \leq \tau$, and hence bounds the H\"older seminorm on its increments. Moreover, by $\mathcal{T}_0(\theta) = u_0$, we find $|\mathcal{T}_t(\theta) - u_0| \lesssim \|A^b\|_{C^\gamma_{\tau} \mathcal{C}_x^{1+\kappa}}\left( 1 + 2T^{\gamma \kappa}R^{1+\kappa}\right) T^{\gamma} 3 \tau^\gamma$, which yields 
    \[
        \|\mathcal{T}(\theta)\|_{C^{\gamma}([0,\tau])} \leq C_1 \|A^b\|_{C_{\tau}^\gamma \mathcal{C}_x^{1+\kappa}} \tau^\gamma \left( 1 + R^{1+\kappa} \right)
    \]
    for some constant $C_1 > 0$ independent of $\tau$ and $R$. 
    
    Next, let $\theta, \widetilde{\theta} \in \mathcal{B}_R$. The additional spatial regularity $A^b \in C_{\tau}^\gamma \mathcal{C}_x^{2+\kappa}$ allows us to apply the stability estimate from Lemma \ref{lemma two parameter nonlinear Young}. Noting that the first term therein vanishes, and that $\|\theta - \widetilde{\theta}\|_{C^{\gamma}([s_1,t_1])} + \|\theta - \widetilde{\theta}\|_{C^{\gamma}([s_2,t_2])} \leq \|\theta - \widetilde{\theta}\|_{C^{\gamma}([0,\tau])}$, we find 
    \begin{align*}
         \bigg| \int_{s_2}^{t_2}\int_{s_1}^{t_1}& A(\mathrm{d}r, \theta_{r_2} - \theta_{r_1}) - \int_{s_2}^{t_2}\int_{s_1}^{t_1}A(\mathrm{d}r, \widetilde{\theta}_{r_2} - \widetilde{\theta}_{r_1}) \bigg| 
        \\ &\lesssim \left( [\theta]_{C^{\gamma}([0,\tau])} + [ \widetilde{\theta}]_{C^{\gamma}([0,\tau])}\right)^{1+\kappa} \|A\|_{C^{\gamma}_{\tau} C_x^{2+\kappa}}\|\theta - \widetilde{\theta}\|_{C^{\gamma}([0,\tau])}(t_1-s_1)^{\gamma}(t_2-s_2)^{\gamma}
        \\ &\lesssim \left( 2R \right)^{1+\kappa} \|A\|_{C^{\gamma}_{\tau} C_x^{2+\kappa}}\|\theta - \widetilde{\theta}\|_{C^{\gamma}([0,\tau])}(t_1-s_1)^{\gamma}(t_2-s_2)^{\gamma}.
    \end{align*}
    Hence, we obtain 
    \begin{align*}
       |(\mathcal{T}_t(\theta) - \mathcal{T}_t(\widetilde{\theta})) - (\mathcal{T}_s(\theta) - \mathcal{T}_s(\widetilde{\theta}))|
        &\leq \left| \int_{s}^{t}\int_{0}^{s} A(\mathrm{d}r, \theta_{r_2} - \theta_{r_1}) - \int_{s}^{t}\int_{0}^{s}A(\mathrm{d}r, \widetilde{\theta}_{r_2} - \widetilde{\theta}_{r_1}) \right|
        \\ &\ + \left| \int_{0}^{s}\int_{s}^{t} A(\mathrm{d}r, \theta_{r_2} - \theta_{r_1}) - \int_{0}^{s}\int_{s}^{t}A(\mathrm{d}r, \widetilde{\theta}_{r_2} - \widetilde{\theta}_{r_1}) \right| 
        \\ &\ + \left| \int_{s}^{t}\int_{s}^{t} A(\mathrm{d}r, \theta_{r_2} - \theta_{r_1}) - \int_{s}^{t}\int_{s}^{t}A(\mathrm{d}r, \widetilde{\theta}_{r_2} - \widetilde{\theta}_{r_1}) \right| 
        \\ &\lesssim \left( 2R \right)^{1+\kappa} \|A\|_{C^{\gamma}_{\tau} C_x^{2+\kappa}}\|\theta - \widetilde{\theta}\|_{C^{\gamma}([0,\tau])} 3\tau^{\gamma} (t-s)^{\gamma}
    \end{align*}
    where we have used $s, t-s \leq \tau$. Since $\mathcal{T}_0(\theta) = \mathcal{T}_0(\widetilde{\theta})$, we conclude for the H\"older norm that
    \[
    \|\mathcal{T}(\theta) - \mathcal{T}(\widetilde{\theta})\|_{C^{\gamma}([0,\tau])} \leq C_2 \left( 2R \right)^{1+\kappa} \|A\|_{C^{\gamma}_{\tau} C_x^{2+\kappa}}\|\theta - \widetilde{\theta}\|_{C^{\gamma}([0,\tau])} \tau^{\gamma} 
    \]
    for some constant $C_2 > 0$.
    
    To close the argument, we first choose $R = 1$. Then, we select $\tau > 0$ sufficiently small such that
    \begin{align*}
        2C_1 \|A^b\|_{C_{\tau}^\gamma \mathcal{C}_x^{1+\kappa}} \tau^\gamma  \leq 1 \quad \text{and} \quad 2^{1+\kappa} C_2 \|A^b\|_{C_{\tau}^\gamma \mathcal{C}_x^{2+\kappa}} \tau^\gamma \leq \frac{1}{2}.
    \end{align*}
    With this choice, $\mathcal{T}$ maps $\mathcal{B}_1$ into itself and is a contraction mapping with Lipschitz constant bounded by $1/2$. By the Banach fixed-point theorem, there exists a unique solution $\theta \in C^\gamma([0,\tau]; \R^d)$. Since the choice of the step size $\tau$ depends only on the norm of $A^b$ and not on the initial condition, the solution can be iteratively extended over $[\tau, 2\tau], [2\tau, 3\tau]$, and so forth, yielding a unique global solution $u(t) = \theta_t + z_t$ on $[0,T]$.
\end{proof}

Finally, we can combine the local time approach with the above to obtain the existence, uniqueness, and stability of solutions in terms of the regularity of the two-parameter self-intersection measure.

\begin{Theorem}\label{thm: FT rough equation 2}
    Let $w \in L^{\infty}([0,T])$ and $z \in C([0,T]; \R^d)$. Suppose that $b \in \mathcal{F}L_{q'}^{\delta}(\R^d)$ and the two-parameter measure defined in \eqref{eq: selfintersection measure two parameter} satisfies $G \in C^{\gamma}([0,T]^2; \mathcal{F}L_q^{\kappa}(\R^d))$, where $\gamma \in (1/2,1)$, $\delta, \kappa \in \R$, and $q, q' \in [1,\infty]$ satisfy
    \begin{align}\label{eq: TH}
        \frac{1}{q} + \frac{1}{q'} = 1 \ \text{ and } \ \delta + \kappa > 1 + \frac{1}{\gamma}.
    \end{align} 
    Then \eqref{eq: selfintersecting Volterra diffusion process} has a unique solution $u$. This solution depends continuously on the drift $b$ and initial datum $u_0$. Namely, let $(b_n)_{n \geq 1} \subset \mathcal{F}L_{q'}^{\delta}(\R^d)$ be a sequence of Lipschitz continuous vector fields with $b_n \longrightarrow b$ in $\mathcal{F}L_{q'}^{\delta}(\R^d)$ and $(u_0^{(n)})_{n\geq 1} \subset \R^d$ with $u_0^{(n)} \longrightarrow u_0$. Denote by $u_n$ the unique solution of 
    \[
        u_n(t) = u^{(n)}_0 + \int_0^t \int_0^t w_s w_r b_n(u_n(s) - u_n(r))\mathrm{d}r\mathrm{d}s + z_t, \qquad t \in [0,T].
    \]
    Then there exists a constant $C > 0$ such that
    \[
        \|u_n - u\|_{C^{\gamma}([0,T])} \leq C\left(\|b_n - b\|_{\mathcal{F}L_{q'}^{\delta}} + |u_0^{(n)} - u_0| \right).
    \]
\end{Theorem}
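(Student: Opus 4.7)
My plan is to reduce the statement to Proposition \ref{thm: rough equation 2}(b) by extracting sufficient joint space-time regularity of the two-parameter Young field
\[
  A^{b}(t_1,t_2,\theta) = (b \ast \overline{G}_{t_1,t_2})(\theta)
\]
from the Fourier-Lebesgue regularity of $b$ and $G$, and then to derive the stability bound by exploiting the contraction argument that already underlies Proposition \ref{thm: rough equation 2}(b).

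The first step is to use the Young inequality \eqref{eq: Young FL} with exponents $(p_0,p_1,p) = (q',q,1)$, which is admissible since $\frac{1}{q'}+\frac{1}{q}=1$. This gives, for every $(s_1,t_1),(s_2,t_2) \in \Delta_T^2$,
\begin{align*}
 \| A^{b}(t_1,t_2,\cdot)\|_{\mathcal{F}L_1^{\delta+\kappa}}
 &\leq \|b\|_{\mathcal{F}L_{q'}^{\delta}} \| \overline{G}_{t_1,t_2}\|_{\mathcal{F}L_q^{\kappa}}, \\
 \| A^{b}(t_1,t_2,\cdot) - A^{b}(s_1,t_2,\cdot)\|_{\mathcal{F}L_1^{\delta+\kappa}}
 &\leq \|b\|_{\mathcal{F}L_{q'}^{\delta}} \| \overline{G}_{t_1,t_2} - \overline{G}_{s_1,t_2}\|_{\mathcal{F}L_q^{\kappa}},
\end{align*}
and analogous bounds for the increments in $t_2$ and for the mixed increment $\Box_{s,t}A^b$. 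Combining these with the embedding $\mathcal{F}L_1^{\delta+\kappa}(\R^d) \hookrightarrow \mathcal{C}^{\delta+\kappa}(\R^d)$ from Lemma \ref{lemma: Fourier Lebesgue embedding} and the hypothesis $G \in C^{\gamma}([0,T]^2;\mathcal{F}L_q^{\kappa}(\R^d))$ yields
\[
   A^{b} \in C^{\gamma}([0,T]^2;\mathcal{C}^{\delta+\kappa}(\R^d)),
   \qquad \| A^b\|_{C^{\gamma}_{T} \mathcal{C}_x^{\delta+\kappa}} \lesssim \|b\|_{\mathcal{F}L_{q'}^{\delta}}\|G\|_{C_T^{\gamma}\mathcal{F}L_q^{\kappa}}.
\]
Since by \eqref{eq: TH} we have $\delta+\kappa > 1 + 1/\gamma > 2$, we may set $\kappa' = \delta + \kappa - 2 - \e$ for some small $\e>0$, and this choice satisfies $\gamma(1+\kappa') > 1$, so Proposition \ref{thm: rough equation 2}(b) applies and delivers a unique solution $u \in C([0,T];\R^d)$ with $\theta = u - z \in C^{\gamma}([0,T];\R^d)$ solving the nonlinear Young equation \eqref{eq; nonlinear Young 2}.

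For the stability statement, I would first note that the above Young-type bound is linear in $b$, so
\[
  \| A^{b_n} - A^{b}\|_{C^{\gamma}_{T}\mathcal{C}_x^{2+\kappa'}}
  \lesssim \|b_n - b\|_{\mathcal{F}L_{q'}^{\delta}}\|G\|_{C_T^{\gamma}\mathcal{F}L_q^{\kappa}},
\]
and that for Lipschitz drifts $b_n$ the classical theory ensures that $u_n$ coincides with the solution constructed via Proposition \ref{thm: rough equation 2}. Subtracting the fixed-point identities and applying the two-parameter nonlinear Young stability of Lemma \ref{lemma two parameter nonlinear Young} on a sub-interval $[0,\tau]$ with $\tau$ chosen as in the proof of Proposition \ref{thm: rough equation 2}(b), I obtain
\[
  \|\theta_n - \theta\|_{C^{\gamma}([0,\tau])}
  \leq |u_0^{(n)}-u_0| + C\tau^{\gamma} \|A^{b_n} - A^{b}\|_{C^{\gamma}_{\tau}\mathcal{C}_x^{2+\kappa'}}
  + \tfrac{1}{2}\|\theta_n - \theta\|_{C^{\gamma}([0,\tau])},
\]
which absorbs the last term into the left-hand side and then is propagated over finitely many sub-intervals of length $\tau$ (whose length depends only on $\|G\|$, $\sup_n \|b_n\|$ and $\|b\|$, hence is uniform) to cover $[0,T]$ and give the claimed bound.

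The main obstacle in my view is the second step: ensuring that the fixed-point/stability machinery from Proposition \ref{thm: rough equation 2}(b) can be iterated with constants that are uniform in $n$. This requires the $\mathcal{F}L_{q'}^{\delta}$-convergence $b_n \to b$ to produce a bound on $\|A^{b_n}\|_{C_T^{\gamma}\mathcal{C}_x^{2+\kappa'}}$ uniform in $n$ (which it does, since $\|b_n\|_{\mathcal{F}L_{q'}^{\delta}}$ is bounded), so that the step size $\tau$ in the contraction argument can be chosen independently of $n$. Once this uniformity is established, the iteration over $[0,T]$ is routine.
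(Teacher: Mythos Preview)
Your proposal is correct and follows essentially the same approach as the paper: apply the Young inequality \eqref{eq: Young FL} together with the embedding $\mathcal{F}L_1^{\delta+\kappa}\hookrightarrow\mathcal{C}^{\delta+\kappa}$ to obtain $A^b\in C^{\gamma}([0,T]^2;\mathcal{C}^{\delta+\kappa})$, then invoke Proposition~\ref{thm: rough equation 2}(b). For stability the paper simply refers to \cite[Proposition 23]{BHR23} together with Lemma~\ref{lemma two parameter nonlinear Young}, whereas you spell out the contraction/iteration scheme explicitly; the content is the same.
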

\begin{proof}
    An application of Lemma \ref{lemma: Fourier Lebesgue embedding} and then Young's inequality \eqref{eq: Young FL}, gives for $s < t$ in $[0,T]^2$ the bound
    \begin{align*}
        \|b \ast \Box_{s,t} \overline{G}\|_{\mathcal{C}^{\delta + \kappa}}
        \lesssim \|b \ast \Box_{s,t} \overline{G}\|_{\mathcal{F}L_1^{\delta + \kappa}} 
        &\leq \| b\|_{\mathcal{F}L_{q'}^{\delta}} \| \Box_{s,t} \overline{G}\|_{\mathcal{F}L_q^{\kappa}}
    \\ &\leq \| b\|_{\mathcal{F}L_{q'}^{\delta}} \| \overline{G}\|_{C^{\gamma}([0,T]^2;\mathcal{F}L_q^{\kappa})}(t_2-s_2)^{\gamma}(t_1-s_1)^{\gamma},
    \end{align*}
    where the right-hand side is finite by assumption and since $G$ and $\overline{G}$ have the same regularity in the Fourier-Lebesgue space. Similarly, we obtain
    \begin{align*}
         \|b \ast \left(\overline{G}_{t_1,s_2} -  \overline{G}_{s_1,s_2}\right)\|_{\mathcal{C}^{\delta + \kappa}}
         \leq \| b\|_{\mathcal{F}L_{q'}^{\delta}} \| \overline{G}\|_{C^{\gamma}([0,T]^2;\mathcal{F}L_q^{\kappa})}(t_1-s_1)^{\gamma}.
    \end{align*}
    and analogously $\|b \ast \left(\overline{G}_{s_1,t_2} -  \overline{G}_{s_1,s_2}\right)\|_{\mathcal{C}^{\delta + \kappa}} \leq \| b\|_{\mathcal{F}L_{q'}^{\delta}} \| \overline{G}\|_{C^{\gamma}([0,T]^2;\mathcal{F}L_q^{\kappa})}(t_2-s_2)^{\gamma}$. This shows that $A^{b} \in C^{\gamma}([0,T]^2; \mathcal{C}^{\delta + \kappa}(\R^d))$. Since $\delta + \kappa > 2$ by assumption and $\gamma( 1 + (\delta + \kappa - 2)) = \gamma ( \delta + \kappa - 1) > 1$, the existence and uniqueness follow from Proposition \ref{thm: rough equation 2}.(b). For the stability of solutions, we may follow the same arguments as given in \cite[Proposition 23]{BHR23}, taking into account Lemma \ref{lemma two parameter nonlinear Young}.
\end{proof}

We close this section with a simple sufficient criterion for the regularity of  $G$ given by \eqref{eq: selfintersection measure two parameter}. 

\begin{Remark}\label{remark 5}
    Let $(Z_t)_{t \in [0,T]}$ be a stochastic process on $\R^d$. Suppose that there exists $\eta \in (0,1/2)$, $\kappa_* > 0$ and $p \geq 2$ such that the Fourier transform of the weighted occupation measure $\widehat{\ell}_{s,t}^{w}(\xi) = \int_s^t w_r \mathrm{e}^{\mathrm{i}\langle \xi, Z_r\rangle}\, \mathrm{d}r$ satisfies
    \[
        \left\| \widehat{\ell}_{s,t}^{w}(\xi) \right\|_{L^p(\Omega)} \lesssim (1+|\xi|)^{- \kappa_*}(t-s)^{1- \eta}, \qquad \xi \in \R^d.
    \]
    Then $G$ defined in \eqref{eq: selfintersection measure two parameter} with $z = Z$ satisfies for each $\e > 0$, $q \in [1,\infty)$, and $\kappa < 2\kappa_* - \frac{d}{q}$
    \[
        G \in L^{\frac{p}{2}}(\Omega, \P; C^{1 - \eta - \frac{2}{p} - \e}([0,T]^2; \mathcal{F}L_q^{\kappa}(\R^d))).
    \] 
\end{Remark}
\begin{proof}
    For given $(s_1, t_1), (s_2, t_2) \in \Delta_T^2$ we obtain by the Cauchy-Schwarz inequality
    \begin{align*}
        \left \| \Box_{s,t}\widehat{G}(\xi) \right\|_{L^{p/2}(\Omega)} 
        &= \left\| \int_{s_2}^{t_2} \int_{s_1}^{t_1} w_{r_2} w_{r_1} \mathrm{e}^{\mathrm{i}\langle \xi, Z_{r_2}- Z_{r_1}\rangle} \mathrm{d}r_1 \mathrm{d}r_2 \right\|_{L^{p/2}(\Omega)} 
        \\ &\leq \| \widehat{\ell}_{s_2, t_2}^{w}(\xi) \|_{L^{p}(\Omega)}\| \widehat{\ell}_{s_1, t_1}^{w}(-\xi)\|_{L^{p}(\Omega)}
        \\ &\lesssim (t_2 - s_2)^{1-\eta} (t_1 - s_1)^{1-\eta} (1+|\xi|)^{-2\kappa_*}.
    \end{align*}
    Moreover, we obtain
    \begin{align*}
        \left \| \widehat{G}_{t_1,s_2}(\xi) - \widehat{G}_{s_1,s_2}(\xi) \right\|_{L^{p/2}(\Omega)} 
        &= \left\| \int_{0}^{s_2}\int_{s_1}^{t_1} w_{r_2} w_{r_1} \mathrm{e}^{\mathrm{i}\langle \xi, Z_{r_2}- Z_{r_1}\rangle} \mathrm{d}r_1 \mathrm{d}r_2 \right\|_{L^{p/2}(\Omega)}
        \\ &\leq \left\| \widehat{\ell}_{0,s_2}^w(\xi)\right\|_{L^p(\Omega)} \left\| \widehat{\ell}^{w}_{s_1, t_1}(-\xi)\right\|_{L^p(\Omega)}
        \\ &\lesssim s_2^{1-\eta} (t_1 - s_1)^{1-\eta} (1+|\xi|)^{-2\kappa_*} 
        \\ &\leq T^{1-\eta} (t_1 - s_1)^{1-\eta} (1+|\xi|)^{-2\kappa_*},
    \end{align*}
    and in a completely analogous way, taking the interval $r_1 \in [0, s_1]$,
    \begin{align*}
        \left \| \widehat{G}_{s_1,t_2}(\xi) - \widehat{G}_{s_1,s_2}(\xi) \right\|_{L^{p/2}(\Omega)} 
        &= \left\| \int_{s_2}^{t_2}\int_{0}^{s_1} w_{r_2} w_{r_1} \mathrm{e}^{\mathrm{i}\langle \xi, Z_{r_2}- Z_{r_1}\rangle} \mathrm{d}r_1 \mathrm{d}r_2 \right\|_{L^{p/2}(\Omega)}
        \\ &\lesssim T^{1-\eta} (t_2 - s_2)^{1-\eta} (1+|\xi|)^{-2\kappa_*}.
    \end{align*}
    An application of the multi-parameter Kolmogorov continuity theorem as stated in \cite[Theorem 3.1]{MR3071383} to the bounds on $\Box_{s,t}G$ combined with the usual one-parameter Kolmogorov-Chentsov theorem applied to the remaining two bounds, yields the assertion.
\end{proof}

This remark is applicable, e.g. in the framework of Sections 3 and 4, see also \eqref{eq: Lp bound FT}. In such a case, the weight $w$ is given by $w_r = \rho_r^{\delta}$ with a suitable choice of $\delta$.

\subsection{Examples}

In this section, we collect a few examples that illustrate the applicability of our results. While our results apply to a general class of Volterra It\^{o} processes that satisfy assumptions (A1) -- (A3), below we focus on the most important case of the fractional Brownian motion $B^H$ on $\R^d$ with Hurst parameter $H \in (0,1)$. Its unweighted occupation measure satisfies the assumptions of Remark \ref{remark 5} with regularity $\kappa_*(\eta) = \frac{\eta}{H}$ and any choice of $p \geq 2$. In particular, for any choice of $\eta \in (0,1/2)$, $p \geq 2$, $0 < \gamma < 1 - \eta - \frac{2}{p}$, $q \in [1,\infty)$ and $\kappa < 2\kappa_*(\eta) - \frac{d}{q} = \frac{2\eta}{H} - \frac{d}{q}$, there exists $\Omega_0$ with $\P[\Omega_0] = 1$ such that its two-parameter self-intersection measure satisfies 
\begin{align}\label{eq: Example}
    G \in C^\gamma([0,T]^2; \mathcal{F}L_q^\kappa(\R^d)) \ \text{ on } \ \Omega_0.
\end{align}
Assume $b \in \mathcal{F}L_{q'}^{\delta}(\R^d)$, then Theorem \ref{thm: FT rough equation 2} is applicable, provided that \eqref{eq: TH} holds, i.e. $\delta + \kappa > 1 + 1/\gamma$. By choosing $\eta$ close to $\frac{1}{2}$, $p \geq 2$ sufficiently large, and $\kappa$ close to $2\kappa_*(\eta) - d/q$, gives $\gamma \approx 1/2$, and yields
\begin{align}\label{Example condition}
    \delta + \frac{1}{H} - \frac{d}{q} > 3.
\end{align}
Hence, if \eqref{Example condition} is satisfied, we may choose $\eta, p, \kappa$, find the corresponding $\Omega_0$ with property \eqref{eq: Example}, and finally apply Theorem \ref{thm: FT rough equation 2}.

Our first example provides an analogue of the fractional skew Brownian motion, with the difference that this process avoids its own sample paths rather than the origin.

\begin{Example}[self-interacting fractional Brownian motion]
    Let $d = 1$. If $H < 1/4$, then there exists $\Omega_0$ with $\P[\Omega_0] = 1$ such that for each $\omega \in \Omega_0$ the equation
    \[
        u(t) = u_0 + \int_0^t \int_0^t \delta_0(u(s) - u(r))\,\mathrm{d}r\mathrm{d}s + B_t^H(\omega)
    \]
    has a unique solution.
\end{Example}
\begin{proof}
    Here $b = \delta_0$ and hence $\widehat{b}(\xi) = 1$. Thus, $b \in \mathcal{F}L_{q'}^\delta(\R)$ for $q' = \infty$ and $\delta = 0$. Hence $q = 1$ and \eqref{Example condition} reduces to $H < 1/4$. This proves the assertion. 
\end{proof}

In our next example, we consider a dynamical version of the Edwards model for polymer physics. 

\begin{Example}[continuous Edwards model]
    Let $\theta \in \R \backslash \{0\}$. If $H < \frac{1}{d+4}$, then there exists $\Omega_0$ with $\P[\Omega_0] = 1$ such that for each $\omega \in \Omega_0$ the equation
    \[
        u(t) = u_0 + \theta \int_0^t \int_0^t \nabla \delta_0(u(s) - u(r))\, \mathrm{d}r\mathrm{d}s + B_t^H(\omega)
    \]
    has a unique solution.
\end{Example}
\begin{proof}
    In this case we find $b = \theta \nabla \delta_0$, whose Fourier transform is given by $\widehat{b}(\xi) = \mathrm{i}\xi$. This implies $b \in \mathcal{F}L_{\infty}^{-1}(\R^d)$, i.e. $q' = \infty$ and $\delta = -1$. Hence $q = 1$ and condition \eqref{Example condition} reduces to $1/H > 4+d$, which proves the assertion. 
\end{proof}

The flexibility of the Fourier-Lebesgue spaces allows us to easily treat fractional singularities. 

\begin{Example}[continuous Edwards model with fractional interactions]
    Let $B^H$ be the fractional Brownian motion on $\R^d$. Let $b(x) = \nabla \left( |x|^{-\alpha} \chi(x) \right)$ where $\alpha \in (0, d-1)$ and $\chi: \R^d \longrightarrow \R_+$ is smooth, compactly supported, and satisfies $\chi(x) = 1$ in a neighbourhood of the origin. If 
    \[
        H < \frac{1}{4+\alpha},
    \]
    then there exists $\Omega_0$ with $\P[\Omega_0] = 1$ such that for each $\omega \in \Omega_0$ the equation
    \[
        u(t) = u_0 + \theta \int_0^t \int_0^t \nabla \left( |u(s) - u(r)|^{-\alpha} \chi(u(s) - u(r)) \right)\, \mathrm{d}r\mathrm{d}s + B_t^H(\omega)
    \]
    has a unique solution.
\end{Example}
\begin{proof}
    Let $f(x) = \theta |x|^{-\alpha}\chi(x)$ so that $b(x) = \nabla f(x)$. Since $\alpha \in (0,d-1)$, it follows from \cite[Theorem 2.4.6]{MR3243734} that the Fourier transform of $b_0(x) = |x|^{-\alpha}$ is given by $\widehat{b}_0(\xi) = C_{d,\alpha}|\xi|^{\alpha - d}$, where $C_{d,\alpha} = \frac{\pi^{\alpha - \frac{d}{2}}\Gamma\left( \frac{d-\alpha}{2}\right)}{\Gamma\left(\frac{\alpha}{2}\right)}$. Because $f$ is compactly supported and possesses an integrable singularity of order $\alpha$ at the origin, its Fourier transform is well-defined. Since $\widehat{\chi} \in \mathcal{S}(\R^d)$ due to the compact support of $\chi$, we obtain
    \[
        \widehat{f}(\xi) = (\widehat{\chi} \ast \widehat{b}_0)(\xi) = \int_{\R^d}\widehat{\chi}(\xi - z)C_{d,\alpha}|z|^{\alpha - d}\, \mathrm{d}z \sim |\xi|^{\alpha - d}, \qquad \text{as } |\xi| \to \infty.
    \]
    Because $b(x) = \nabla f(x)$, its Fourier transform is given by $\widehat{b}(\xi) = i\xi \widehat{f}(\xi)$. Consequently, $|\widehat{b}(\xi)| \sim |\xi|^{\alpha - d + 1}$ as $|\xi| \to \infty$. Using this asymptotics, we find $b \in \mathcal{F}L_{q'}^\delta(\R^d)$ provided that $\delta + (\alpha + 1 - d) < -d/q'$, where $q' \in [1,\infty)$. Let $q \in (1,\infty]$ be the conjugate exponent of $q'$, so that $d/q' = d - d/q$. This allows us to choose any $\delta$ strictly satisfying $\delta < d/q - \alpha - 1$. Plugging the upper bound on $\delta$ into condition \eqref{Example condition} yields 
    \[
        \left( \frac{d}{q} - \alpha - 1 \right) + \frac{1}{H} - \frac{d}{q} > 3,
    \]
    which reduces to $H < \frac{1}{4+\alpha}$, and proves the assertion.
\end{proof}

As a final example, we consider a self-interacting diffusion equation driven by fractional noise, which can be viewed as an integrated, fractional analogue of the classical Durrett-Rogers model for self-repelling polymer diffusions.

\begin{Example}[generalised Durrett-Rogers model]
    Suppose that $d = 1$. Let $\theta \neq 0$ and  $\chi: \R \longrightarrow \R_+$ be smooth, compactly supported, and satisfy $\chi(x) = 1$ in a neighbourhood of the origin. If $H < 1/3$, then there exists $\Omega_0$ with $\P[\Omega_0] = 1$ such that for each $\omega \in \Omega_0$ the equation
    \[
        u(t) = u_0 + \theta\int_0^t \int_0^t \chi(u(s) - u(r)) \mathrm{sgn}(u(s) - u(r))\, \mathrm{d}r\mathrm{d}s + B_t^H(\omega)
    \]
    has a unique solution.
\end{Example}
\begin{proof}
    In this case, $b(x) = \theta \chi(x)\mathrm{sgn}(x)$ acts as a localised step function. To find the asymptotics of its Fourier transform, let us first note that its distributional derivative is given by $b'(x) = 2 \theta \delta_0(x) + \theta \chi'(x)\mathrm{sgn}(x)$ since $\chi(0) = 1$. Taking the Fourier transform gives $\widehat{b'}(\xi) = \mathrm{i} \xi \widehat{b}(\xi) = 2\theta + \widehat{\varphi}(\xi)$ where $\widehat{\varphi} \in \mathcal{S}(\R)$ since $\varphi(x) = \theta \chi'(x)\mathrm{sgn}(x)$ is compactly supported and smooth as $\chi'$ vanishes in a neighbourhood of the origin. Hence we obtain $|\widehat{b}(\xi)| \sim |\xi|^{-1}$ as $|\xi| \to \infty$. Thus, $b \in \mathcal{F}L_{q'}^\delta(\R)$ provided that $\delta < 1 - 1/q'$ and $q' \in [1,\infty)$. Set $q \in (1,\infty]$ by $1/q' = 1 - 1/q$, then $\delta < 1/q$. Inserting this upper bound for $\delta$ into condition \eqref{Example condition} with $d=1$ yields $H < 1/3$, and proves the assertion.
\end{proof}

\subsection*{Acknowledgements}
The author would like to thank Kristof Wiedermann for pointing out a necessary extension of conditions (A3) and (B2) that allows for a larger class of models. 

\appendix

\bibliographystyle{amsplain}
\phantomsection\addcontentsline{toc}{section}{\refname}\bibliography{Bibliography}

@article {MR4894548,
    AUTHOR = {Sun, Yaqin and Xue, Hongfei and Yan, Litan},
     TITLE = {Asymptotic behavior of a weighted self-repelling diffusion
              driven by fractional {B}rownian motion},
   JOURNAL = {Probab. Uncertain. Quant. Risk},
  FJOURNAL = {Probability, Uncertainty and Quantitative Risk},
    VOLUME = {9},
      YEAR = {2024},
    NUMBER = {4},
     PAGES = {575--604},
      ISSN = {2095-9672,2367-0126},
   MRCLASS = {60J60 (60F05 60G22 60H07)},
  MRNUMBER = {4894548},
MRREVIEWER = {Ren\ Ming\ Song},
       DOI = {10.3934/puqr.2024024},
       URL = {https://doi.org/10.3934/puqr.2024024},
}

@article {MR1883716,
    AUTHOR = {Bena\"im, Michel and Ledoux, Michel and Raimond, Olivier},
     TITLE = {Self-interacting diffusions},
   JOURNAL = {Probab. Theory Related Fields},
  FJOURNAL = {Probability Theory and Related Fields},
    VOLUME = {122},
      YEAR = {2002},
    NUMBER = {1},
     PAGES = {1--41},
      ISSN = {0178-8051,1432-2064},
   MRCLASS = {60J60},
  MRNUMBER = {1883716},
MRREVIEWER = {G\"otz\ Kersting},
       DOI = {10.1007/s004400100161},
       URL = {https://doi.org/10.1007/s004400100161},
}

@article {MR2451570,
    AUTHOR = {Mountford, Thomas and Tarr\`es, Pierre},
     TITLE = {An asymptotic result for {B}rownian polymers},
   JOURNAL = {Ann. Inst. Henri Poincar\'e{} Probab. Stat.},
  FJOURNAL = {Annales de l'Institut Henri Poincar\'e{} Probabilit\'es et
              Statistiques},
    VOLUME = {44},
      YEAR = {2008},
    NUMBER = {1},
     PAGES = {29--46},
      ISSN = {0246-0203,1778-7017},
   MRCLASS = {60F15 (60K35 82D60)},
  MRNUMBER = {2451570},
MRREVIEWER = {Zhong\ Gen\ Su},
       DOI = {10.1214/07-AIHP113},
       URL = {https://doi.org/10.1214/07-AIHP113},
}

@article {MR1348356,
    AUTHOR = {Cranston, M. and Le Jan, Y.},
     TITLE = {Self-attracting diffusions: two case studies},
   JOURNAL = {Math. Ann.},
  FJOURNAL = {Mathematische Annalen},
    VOLUME = {303},
      YEAR = {1995},
    NUMBER = {1},
     PAGES = {87--93},
      ISSN = {0025-5831,1432-1807},
   MRCLASS = {60H10 (60F15 60J60)},
  MRNUMBER = {1348356},
MRREVIEWER = {Qingji\ Yang},
       DOI = {10.1007/BF01460980},
       URL = {https://doi.org/10.1007/BF01460980},
}

@article {MR1165516,
    AUTHOR = {Durrett, R. T. and Rogers, L. C. G.},
     TITLE = {Asymptotic behavior of {B}rownian polymers},
   JOURNAL = {Probab. Theory Related Fields},
  FJOURNAL = {Probability Theory and Related Fields},
    VOLUME = {92},
      YEAR = {1992},
    NUMBER = {3},
     PAGES = {337--349},
      ISSN = {0178-8051,1432-2064},
   MRCLASS = {60H20 (60J70)},
  MRNUMBER = {1165516},
MRREVIEWER = {Michael\ Cranston},
       DOI = {10.1007/BF01300560},
       URL = {https://doi.org/10.1007/BF01300560},
}

@article {MR2952088,
    AUTHOR = {Tarr\`es, Pierre and T\'oth, B\'alint and Valk\'o, Benedek},
     TITLE = {Diffusivity bounds for 1{D} {B}rownian polymers},
   JOURNAL = {Ann. Probab.},
  FJOURNAL = {The Annals of Probability},
    VOLUME = {40},
      YEAR = {2012},
    NUMBER = {2},
     PAGES = {695--713},
      ISSN = {0091-1798,2168-894X},
   MRCLASS = {60K35 (60F15 60G15 60J25 60J55 60K37 60K40)},
  MRNUMBER = {2952088},
MRREVIEWER = {Pablo\ Mart\'in\ Rodr\'iguez},
       DOI = {10.1214/10-AOP630},
       URL = {https://doi.org/10.1214/10-AOP630},
}

@article {MR3778355,
    AUTHOR = {El Euch, Omar and Fukasawa, Masaaki and Rosenbaum, Mathieu},
     TITLE = {The microstructural foundations of leverage effect and rough
              volatility},
   JOURNAL = {Finance Stoch.},
  FJOURNAL = {Finance and Stochastics},
    VOLUME = {22},
      YEAR = {2018},
    NUMBER = {2},
     PAGES = {241--280},
      ISSN = {0949-2984,1432-1122},
   MRCLASS = {60F17 (60G55 91B70 91G70)},
  MRNUMBER = {3778355},
       DOI = {10.1007/s00780-018-0360-z},
       URL = {https://doi.org/10.1007/s00780-018-0360-z},
}

@article {MR3494612,
    AUTHOR = {Bayer, Christian and Friz, Peter and Gatheral, Jim},
     TITLE = {Pricing under rough volatility},
   JOURNAL = {Quant. Finance},
  FJOURNAL = {Quantitative Finance},
    VOLUME = {16},
      YEAR = {2016},
    NUMBER = {6},
     PAGES = {887--904},
      ISSN = {1469-7688,1469-7696},
   MRCLASS = {91G20 (60G22 60H30 62P05 91G80)},
  MRNUMBER = {3494612},
       DOI = {10.1080/14697688.2015.1099717},
       URL = {https://doi.org/10.1080/14697688.2015.1099717},
}

@article {MR4188876,
    AUTHOR = {Fukasawa, Masaaki},
     TITLE = {Volatility has to be rough},
   JOURNAL = {Quant. Finance},
  FJOURNAL = {Quantitative Finance},
    VOLUME = {21},
      YEAR = {2021},
    NUMBER = {1},
     PAGES = {1--8},
      ISSN = {1469-7688,1469-7696},
   MRCLASS = {91B70},
  MRNUMBER = {4188876},
       DOI = {10.1080/14697688.2020.1825781},
       URL = {https://doi.org/10.1080/14697688.2020.1825781},
}

@article {MR4897765,
    AUTHOR = {Hamaguchi, Yushi},
     TITLE = {Weak well-posedness of stochastic {V}olterra equations with
              completely monotone kernels and nondegenerate noise},
   JOURNAL = {Ann. Appl. Probab.},
  FJOURNAL = {The Annals of Applied Probability},
    VOLUME = {35},
      YEAR = {2025},
    NUMBER = {2},
     PAGES = {1442--1488},
      ISSN = {1050-5164,2168-8737},
   MRCLASS = {60H20 (60G22 60H15 60H50)},
  MRNUMBER = {4897765},
       DOI = {10.1214/25-aap2149},
       URL = {https://doi.org/10.1214/25-aap2149},
}

@article {MR3805308,
    AUTHOR = {Gatheral, Jim and Jaisson, Thibault and Rosenbaum, Mathieu},
     TITLE = {Volatility is rough},
   JOURNAL = {Quant. Finance},
  FJOURNAL = {Quantitative Finance},
    VOLUME = {18},
      YEAR = {2018},
    NUMBER = {6},
     PAGES = {933--949},
      ISSN = {1469-7688,1469-7696},
   MRCLASS = {60H30 (60G22 62M20 62P05 91B84 91G70)},
  MRNUMBER = {3805308},
       DOI = {10.1080/14697688.2017.1393551},
       URL = {https://doi.org/10.1080/14697688.2017.1393551},
}

@article {MR4832220,
    AUTHOR = {Alfonsi, Aur\'elien},
     TITLE = {Nonnegativity preserving convolution kernels. {A}pplication to
              {S}tochastic {V}olterra {E}quations in closed convex domains
              and their approximation},
   JOURNAL = {Stochastic Process. Appl.},
  FJOURNAL = {Stochastic Processes and their Applications},
    VOLUME = {181},
      YEAR = {2025},
     PAGES = {Paper No. 104535},
      ISSN = {0304-4149,1879-209X},
   MRCLASS = {99-06},
  MRNUMBER = {4832220},
       DOI = {10.1016/j.spa.2024.104535},
       URL = {https://doi.org/10.1016/j.spa.2024.104535},
}

@article {MR3071383,
    AUTHOR = {Hu, Yaozhong and Le, Khoa},
     TITLE = {A multiparameter {G}arsia-{R}odemich-{R}umsey inequality and
              some applications},
   JOURNAL = {Stochastic Process. Appl.},
  FJOURNAL = {Stochastic Processes and their Applications},
    VOLUME = {123},
      YEAR = {2013},
    NUMBER = {9},
     PAGES = {3359--3377},
      ISSN = {0304-4149,1879-209X},
   MRCLASS = {60G17 (26A16 26D15 60G15 60G60 60H15)},
  MRNUMBER = {3071383},
MRREVIEWER = {Dongsheng\ Wu},
       DOI = {10.1016/j.spa.2013.04.019},
       URL = {https://doi.org/10.1016/j.spa.2013.04.019},
}

@article {MR4255174,
    AUTHOR = {Friesen, Martin and Jin, Peng and R\"{u}diger, Barbara},
     TITLE = {Existence of densities for stochastic differential equations
              driven by {L}\'{e}vy processes with anisotropic jumps},
   JOURNAL = {Ann. Inst. Henri Poincar\'{e} Probab. Stat.},
  FJOURNAL = {Annales de l'Institut Henri Poincar\'{e} Probabilit\'{e}s et
              Statistiques},
    VOLUME = {57},
      YEAR = {2021},
    NUMBER = {1},
     PAGES = {250--271},
      ISSN = {0246-0203,1778-7017},
   MRCLASS = {60H10 (60E07 60G30 60G51)},
  MRNUMBER = {4255174},
       DOI = {10.1214/20-aihp1077},
       URL = {https://doi.org/10.1214/20-aihp1077},
}

@article {MR4127334,
    AUTHOR = {Friesen, Martin and Jin, Peng and R\"{u}diger, Barbara},
     TITLE = {Existence of densities for multi-type continuous-state
              branching processes with immigration},
   JOURNAL = {Stochastic Process. Appl.},
  FJOURNAL = {Stochastic Processes and their Applications},
    VOLUME = {130},
      YEAR = {2020},
    NUMBER = {9},
     PAGES = {5426--5452},
      ISSN = {0304-4149,1879-209X},
   MRCLASS = {60E07 (60G30 60J80)},
  MRNUMBER = {4127334},
       DOI = {10.1016/j.spa.2020.03.012},
       URL = {https://doi.org/10.1016/j.spa.2020.03.012},
}

@article {MR0336813,
    AUTHOR = {Zvonkin, A. K.},
     TITLE = {A transformation of the phase space of a diffusion process
              that will remove the drift},
   JOURNAL = {Mat. Sb. (N.S.)},
  FJOURNAL = {Matematicheski\u{\i} Sbornik. Novaya Seriya},
    VOLUME = {93(135)},
      YEAR = {1974},
     PAGES = {129--149, 152},
      ISSN = {0368-8666},
   MRCLASS = {60H15},
  MRNUMBER = {336813},
MRREVIEWER = {A.\ Friedman},
}

@article {MR2377011,
    AUTHOR = {Davie, A. M.},
     TITLE = {Uniqueness of solutions of stochastic differential equations},
   JOURNAL = {Int. Math. Res. Not. IMRN},
  FJOURNAL = {International Mathematics Research Notices. IMRN},
      YEAR = {2007},
    NUMBER = {24},
     PAGES = {Art. ID rnm124, 26},
      ISSN = {1073-7928,1687-0247},
   MRCLASS = {60H10 (34F05)},
  MRNUMBER = {2377011},
MRREVIEWER = {Mario\ Abundo},
       DOI = {10.1093/imrn/rnm124},
       URL = {https://doi.org/10.1093/imrn/rnm124},
}

@article {MR2172887,
    AUTHOR = {Zhang, Xicheng},
     TITLE = {Strong solutions of {SDES} with singular drift and {S}obolev
              diffusion coefficients},
   JOURNAL = {Stochastic Process. Appl.},
  FJOURNAL = {Stochastic Processes and their Applications},
    VOLUME = {115},
      YEAR = {2005},
    NUMBER = {11},
     PAGES = {1805--1818},
      ISSN = {0304-4149,1879-209X},
   MRCLASS = {60H10},
  MRNUMBER = {2172887},
MRREVIEWER = {G.\ N.\ Mil\cprime shte\u{\i}n},
       DOI = {10.1016/j.spa.2005.06.003},
       URL = {https://doi.org/10.1016/j.spa.2005.06.003},
}

@article {MR1864041,
    AUTHOR = {Gy\"{o}ngy, Istv\'{a}n and Mart\'{\i}nez, Teresa},
     TITLE = {On stochastic differential equations with locally unbounded
              drift},
   JOURNAL = {Czechoslovak Math. J.},
  FJOURNAL = {Czechoslovak Mathematical Journal},
    VOLUME = {51(126)},
      YEAR = {2001},
    NUMBER = {4},
     PAGES = {763--783},
      ISSN = {0011-4642,1572-9141},
   MRCLASS = {60H10 (60H40)},
  MRNUMBER = {1864041},
       DOI = {10.1023/A:1013764929351},
       URL = {https://doi.org/10.1023/A:1013764929351},
}

@article {MR2117951,
    AUTHOR = {Krylov, N. V. and R\"{o}ckner, M.},
     TITLE = {Strong solutions of stochastic equations with singular time
              dependent drift},
   JOURNAL = {Probab. Theory Related Fields},
  FJOURNAL = {Probability Theory and Related Fields},
    VOLUME = {131},
      YEAR = {2005},
    NUMBER = {2},
     PAGES = {154--196},
      ISSN = {0178-8051,1432-2064},
   MRCLASS = {60H20 (35R60 60J60)},
  MRNUMBER = {2117951},
MRREVIEWER = {B.\ G.\ Pachpatte},
       DOI = {10.1007/s00440-004-0361-z},
       URL = {https://doi.org/10.1007/s00440-004-0361-z},
}

@incollection {MR2073441,
    AUTHOR = {Nualart, David and Ouknine, Youssef},
     TITLE = {Stochastic differential equations with additive fractional
              noise and locally unbounded drift},
 BOOKTITLE = {Stochastic inequalities and applications},
    SERIES = {Progr. Probab.},
    VOLUME = {56},
     PAGES = {353--365},
 PUBLISHER = {Birkh\"{a}user, Basel},
      YEAR = {2003},
      ISBN = {3-7643-2197-0},
   MRCLASS = {60H10 (34F05 60G18)},
  MRNUMBER = {2073441},
MRREVIEWER = {Volker\ Wihstutz},
}

@article {MR1934157,
    AUTHOR = {Nualart, David and Ouknine, Youssef},
     TITLE = {Regularization of differential equations by fractional noise},
   JOURNAL = {Stochastic Process. Appl.},
  FJOURNAL = {Stochastic Processes and their Applications},
    VOLUME = {102},
      YEAR = {2002},
    NUMBER = {1},
     PAGES = {103--116},
      ISSN = {0304-4149,1879-209X},
   MRCLASS = {60H10 (60G15 60H07)},
  MRNUMBER = {1934157},
MRREVIEWER = {Philip\ Protter},
       DOI = {10.1016/S0304-4149(02)00155-2},
       URL = {https://doi.org/10.1016/S0304-4149(02)00155-2},
}

@article {MR0568986,
    AUTHOR = {Veretennikov, A. Ju.},
     TITLE = {Strong solutions and explicit formulas for solutions of
              stochastic integral equations},
   JOURNAL = {Mat. Sb. (N.S.)},
  FJOURNAL = {Matematicheski\u{\i} Sbornik. Novaya Seriya},
    VOLUME = {111(153)},
      YEAR = {1980},
    NUMBER = {3},
     PAGES = {434--452, 480},
      ISSN = {0368-8666},
   MRCLASS = {60H20 (60J65)},
  MRNUMBER = {568986},
MRREVIEWER = {A.\ B.\ Buche},
}

@article{H20,
title={An extension of the sewing lemma to hyper-cubes and hyperbolic equations driven by multi-parameter {Y}oung fields}, 
      author={Harang, Fabian A.},
        JOURNAL = {Stochastics and Partial Differential Equations: Analysis and Computations},
      year={2021},
    ISSUE = {3},
    VOLUME = {9},
    DOI = {10.1007/s40072-020-00184-5},
}

@article{BHR23,
title={Non-linear {Y}oung equations in the plane and pathwise regularization by noise for the stochastic wave equation}, 
      author={Bechtold, Florian and Harang, Fabian A. and Rana, Nimit},
        JOURNAL = {Stochastics and Partial Differential Equations: Analysis and Computations},
      year={2023},
    ISSUE = {2},
    VOLUME = {35},
    DOI = {10.1007/s40072-023-00295-9},
}

@article{G21,
title={Nonlinear {Y}oung {D}ifferential {E}quations: {A} Review}, 
      author={Galeati, Lucio},
        JOURNAL = {Journal of Dynamics and Differential Equations},
      year={2023},
    ISSUE = {2},
    VOLUME = {35},
}

@article{LOU20173643,
title = {Local times of stochastic differential equations driven by fractional {B}rownian motions},
journal = {Stochastic Processes and their Applications},
volume = {127},
number = {11},
pages = {3643-3660},
year = {2017},
issn = {0304-4149},
doi = {https://doi.org/10.1016/j.spa.2017.03.013},
url = {https://www.sciencedirect.com/science/article/pii/S0304414917300868},
author = {Shuwen Lou and Cheng Ouyang},
keywords = {Local time, Fractional Brownian motion, Stochastic differential equation},
abstract = {In this paper, we study the existence and (Hölder) regularity of local times of stochastic differential equations driven by fractional Brownian motions. In particular, we show that in one dimension and in the rough case H<1/2, the Hölder exponent (in t) of the local time is 1−H, where H is the Hurst parameter of the driving fractional Brownian motion.}
}

@article{AbiJaber_branching,
author = {Eduardo Abi Jaber},
title = {{Weak existence and uniqueness for affine stochastic {V}olterra equations with ${L^{1}}$-kernels}},
volume = {27},
journal = {Bernoulli},
number = {3},
publisher = {Bernoulli Society for Mathematical Statistics and Probability},
pages = {1583 -- 1615},
year = {2021},
}

@article{mytnik,
  author    = {Mytnik, Leonid and Salisbury, Thomas S.},
  title     = {Uniqueness for {V}olterra-type stochastic integral equations},
    JOURNAL = {ARXIV:1502.05513},
  year      = {2015},
  copyright = {arXiv.org perpetual, non-exclusive license},
  doi       = {10.48550/ARXIV.1502.05513},
  keywords  = {Probability (math.PR), FOS: Mathematics, FOS: Mathematics},
  publisher = {arXiv},
}

@Article{protter_volterra,
  author    = {Philip Protter},
  journal   = {The Annals of Probability},
  title     = {Volterra {E}quations {D}riven by {S}emimartingales},
  year      = {1985},
  issn      = {00911798},
  number    = {2},
  pages     = {519--530},
  volume    = {13},
  abstract  = {Existence and uniqueness of solutions is established for stochastic Volterra integral equations driven by right continuous semimartingales. This resolves (in the affirmative) a conjecture of M. Berger and V. Mizel.},
  publisher = {Institute of Mathematical Statistics},
}

@article{berger_mizel,
 ISSN = {01635549},
 author = {Marc A. Berger and Victor J. Mizel},
 journal = {Journal of Integral Equations},
 number = {3},
 pages = {187--245},
 publisher = {Rocky Mountain Mathematics Consortium},
 title = {Volterra {E}quations with {I}tô {I}ntegrals—I},
 volume = {2},
 year = {1980},
}

@article{BBCF25,
  author = {Bianchi, Luigi Amadeo and Bonaccorsi, Stefano and Ca\~nadas, Ole and Friesen, Martin},
  title = {Limit {T}heorems for stochastic {V}olterra processes},
  year = {2025},
  Journal = {arxiv:2509.08466},
}

@book {MR3024598,
    AUTHOR = {Triebel, Hans},
     TITLE = {Theory of function spaces},
    SERIES = {Modern Birkh\"{a}user Classics},
      NOTE = {Reprint of 1983 edition [MR0730762],
              Also published in 1983 by Birkh\"{a}user Verlag [MR0781540]},
 PUBLISHER = {Birkh\"{a}user/Springer Basel AG, Basel},
      YEAR = {2010},
     PAGES = {285},
      ISBN = {978-3-0346-0415-4; 978-3-0346-0416-1},
   MRCLASS = {01A75},
  MRNUMBER = {3024598},
}

@article {MR4339468,
    AUTHOR = {K\"{u}hn, Franziska and Schilling, Ren\'{e} L.},
     TITLE = {Convolution inequalities for {B}esov and {T}riebel-{L}izorkin
              spaces, and applications to convolution semigroups},
   JOURNAL = {Studia Math.},
  FJOURNAL = {Studia Mathematica},
    VOLUME = {262},
      YEAR = {2022},
    NUMBER = {1},
     PAGES = {93--119},
      ISSN = {0039-3223},
   MRCLASS = {46E35 (35K25 60-XX 60G51)},
  MRNUMBER = {4339468},
MRREVIEWER = {Jiaxin Hu},
       DOI = {10.4064/sm210127-23-3},
       URL = {https://doi.org/10.4064/sm210127-23-3},
}

@book {MR3243734,
    AUTHOR = {Grafakos, Loukas},
     TITLE = {Classical {F}ourier analysis},
    SERIES = {Graduate Texts in Mathematics},
    VOLUME = {249},
   EDITION = {Third},
 PUBLISHER = {Springer, New York},
      YEAR = {2014},
     PAGES = {xviii+638},
      ISBN = {978-1-4939-1193-6; 978-1-4939-1194-3},
   MRCLASS = {42-01 (42Bxx)},
  MRNUMBER = {3243734},
MRREVIEWER = {Atanas G. Stefanov},
       DOI = {10.1007/978-1-4939-1194-3},
       URL = {https://doi.org/10.1007/978-1-4939-1194-3},
}

@book {MR1163193,
    AUTHOR = {Triebel, Hans},
     TITLE = {Theory of function spaces. {II}},
    SERIES = {Monographs in Mathematics},
    VOLUME = {84},
 PUBLISHER = {Birkh\"{a}user Verlag, Basel},
      YEAR = {1992},
     PAGES = {viii+370},
      ISBN = {3-7643-2639-5},
   MRCLASS = {46Exx (46-02)},
  MRNUMBER = {1163193},
MRREVIEWER = {P. Szeptycki},
       DOI = {10.1007/978-3-0346-0419-2},
       URL = {https://doi.org/10.1007/978-3-0346-0419-2},
}

@article {MR3885546,
    AUTHOR = {Romito, Marco},
     TITLE = {A simple method for the existence of a density for stochastic
              evolutions with rough coefficients},
   JOURNAL = {Electron. J. Probab.},
  FJOURNAL = {Electronic Journal of Probability},
    VOLUME = {23},
      YEAR = {2018},
     PAGES = {Paper no. 113, 43},
   MRCLASS = {60H10 (34F05 60H07 60H15)},
  MRNUMBER = {3885546},
MRREVIEWER = {Yong Chen},
       DOI = {10.1214/18-EJP242},
       URL = {https://doi.org/10.1214/18-EJP242},
}

@article {PROMEL2023291b,
    AUTHOR = {Pr\"omel, David J. and Scheffels, David},
     TITLE = {Pathwise uniqueness for singular stochastic {V}olterra
              equations with {H}\"older coefficients},
   JOURNAL = {Stoch. Partial Differ. Equ. Anal. Comput.},
  FJOURNAL = {Stochastics and Partial Differential Equations. Analysis and
              Computations},
    VOLUME = {13},
      YEAR = {2025},
    NUMBER = {1},
     PAGES = {308--366},
      ISSN = {2194-0401,2194-041X},
   MRCLASS = {60H20 (45D05 60H15)},
  MRNUMBER = {4872110},
MRREVIEWER = {Jorge\ A.\ Le\'on},
       DOI = {10.1007/s40072-024-00335-y},
       URL = {https://doi.org/10.1007/s40072-024-00335-y},
}

@article{PROMEL2023291a,
title = {On the existence of weak solutions to stochastic {V}olterra equations},
journal = {Electron. Commun. Probab.},
volume = {28},
NUMBER = {52},
pages = {1-12},
year = {2023},
author = {David J. Prömel and David Scheffels},
}

@article{PROMEL2023291,
title = {{S}tochastic {V}olterra equations with {H}ölder diffusion coefficients},
journal = {Stochastic Processes and their Applications},
volume = {161},
pages = {291-315},
year = {2023},
issn = {0304-4149},
doi = {https://doi.org/10.1016/j.spa.2023.04.005},
url = {https://www.sciencedirect.com/science/article/pii/S030441492300073X},
author = {David J. Prömel and David Scheffels},
}

@article {MR4089788,
    AUTHOR = {L\^{e}, Khoa},
     TITLE = {A stochastic sewing lemma and applications},
   JOURNAL = {Electron. J. Probab.},
  FJOURNAL = {Electronic Journal of Probability},
    VOLUME = {25},
      YEAR = {2020},
     PAGES = {Paper No. 38, 55},
   MRCLASS = {60H10 (60H05 60L20)},
  MRNUMBER = {4089788},
MRREVIEWER = {Torstein K. Nilssen},
       DOI = {10.1214/20-ejp442},
       URL = {https://doi.org/10.1214/20-ejp442},
}

@article {MR4195178,
    AUTHOR = {Friesen, Martin and Jin, Peng and R\"udiger, Barbara},
     TITLE = {On the boundary behavior of multi-type continuous-state
              branching processes with immigration},
   JOURNAL = {Electron. Commun. Probab.},
  FJOURNAL = {Electronic Communications in Probability},
    VOLUME = {25},
      YEAR = {2020},
     PAGES = {Paper No. 84, 14},
      ISSN = {1083-589X},
   MRCLASS = {60G17 (60J25 60J80)},
  MRNUMBER = {4195178},
       DOI = {10.3390/mca25010013},
       URL = {https://doi.org/10.3390/mca25010013},
}

@article {FJKR22,
    AUTHOR = {Friesen, Martin and Jin, Peng and Kremer, Jonas and R\"udiger,
              Barbara},
     TITLE = {Regularity of transition densities and ergodicity for affine
              jump-diffusions},
   JOURNAL = {Math. Nachr.},
  FJOURNAL = {Mathematische Nachrichten},
    VOLUME = {296},
      YEAR = {2023},
    NUMBER = {3},
     PAGES = {1117--1134},
      ISSN = {0025-584X,1522-2616},
   MRCLASS = {60J25 (37A25 60G10 60J76)},
  MRNUMBER = {4585666},
       DOI = {10.1002/mana.202000299},
       URL = {https://doi.org/10.1002/mana.202000299},
}

@article {MR2857449,
    AUTHOR = {De Marco, Stefano},
     TITLE = {Smoothness and asymptotic estimates of densities for {SDE}s
              with locally smooth coefficients and applications to square
              root-type diffusions},
   JOURNAL = {Ann. Appl. Probab.},
  FJOURNAL = {The Annals of Applied Probability},
    VOLUME = {21},
      YEAR = {2011},
    NUMBER = {4},
     PAGES = {1282--1321},
      ISSN = {1050-5164,2168-8737},
   MRCLASS = {60H10 (60H07 60J35 60J60 60J70)},
  MRNUMBER = {2857449},
MRREVIEWER = {Huijie\ Qiao},
       DOI = {10.1214/10-AAP717},
       URL = {https://doi.org/10.1214/10-AAP717},
}

@article {MR350833,
    AUTHOR = {Geman, D. and Horowitz, J.},
     TITLE = {Occupation times for smooth stationary processes},
   JOURNAL = {Ann. Probability},
  FJOURNAL = {The Annals of Probability},
    VOLUME = {1},
      YEAR = {1973},
    NUMBER = {1},
     PAGES = {131--137},
      ISSN = {0091-1798},
   MRCLASS = {60G10},
  MRNUMBER = {350833},
MRREVIEWER = {J. Pickands, III},
       DOI = {10.1214/aop/1176997029},
       URL = {https://doi.org/10.1214/aop/1176997029},
}

@article {MR4488556,
    AUTHOR = {Harang, Fabian A. and Ling, Chengcheng},
     TITLE = {Regularity of local times associated with {V}olterra-{L}\'{e}vy
              processes and path-wise regularization of stochastic
              differential equations},
   JOURNAL = {J. Theoret. Probab.},
  FJOURNAL = {Journal of Theoretical Probability},
    VOLUME = {35},
      YEAR = {2022},
    NUMBER = {3},
     PAGES = {1706--1735},
      ISSN = {0894-9840},
   MRCLASS = {60H10 (35R09 60G51 60H50 60J55)},
  MRNUMBER = {4488556},
       DOI = {10.1007/s10959-021-01114-4},
       URL = {https://doi.org/10.1007/s10959-021-01114-4},
}

@article {MR2132395,
    AUTHOR = {Mocioalca, Oana and Viens, Frederi},
     TITLE = {Skorohod integration and stochastic calculus beyond the
              fractional {B}rownian scale},
   JOURNAL = {J. Funct. Anal.},
  FJOURNAL = {Journal of Functional Analysis},
    VOLUME = {222},
      YEAR = {2005},
    NUMBER = {2},
     PAGES = {385--434},
      ISSN = {0022-1236,1096-0783},
   MRCLASS = {60H07 (60H05)},
  MRNUMBER = {2132395},
MRREVIEWER = {Samy\ Tindel},
       DOI = {10.1016/j.jfa.2004.07.013},
       URL = {https://doi.org/10.1016/j.jfa.2004.07.013},
}

@article {MR4019885,
    AUTHOR = {Abi Jaber, Eduardo and Larsson, Martin and Pulido, Sergio},
     TITLE = {Affine {V}olterra processes},
   JOURNAL = {Ann. Appl. Probab.},
  FJOURNAL = {The Annals of Applied Probability},
    VOLUME = {29},
      YEAR = {2019},
    NUMBER = {5},
     PAGES = {3155--3200},
      ISSN = {1050-5164},
   MRCLASS = {60H20 (45D05 60G22 91G20)},
  MRNUMBER = {4019885},
MRREVIEWER = {Ya. \={I}. B\={\i}lopol\cprime s\cprime ka},
       DOI = {10.1214/19-AAP1477},
       URL = {https://doi.org/10.1214/19-AAP1477},
}

@article {MR3022725,
    AUTHOR = {Debussche, Arnaud and Fournier, Nicolas},
     TITLE = {Existence of densities for stable-like driven {SDE}'s with
              {H}\"{o}lder continuous coefficients},
   JOURNAL = {J. Funct. Anal.},
  FJOURNAL = {Journal of Functional Analysis},
    VOLUME = {264},
      YEAR = {2013},
    NUMBER = {8},
     PAGES = {1757--1778},
      ISSN = {0022-1236,1096-0783},
   MRCLASS = {60H10 (60G52)},
  MRNUMBER = {3022725},
MRREVIEWER = {Hong\ Zhang},
       DOI = {10.1016/j.jfa.2013.01.009},
       URL = {https://doi.org/10.1016/j.jfa.2013.01.009},
}

@article {MR2668905,
    AUTHOR = {Fournier, Nicolas and Printems, Jacques},
     TITLE = {Absolute continuity for some one-dimensional processes},
   JOURNAL = {Bernoulli},
  FJOURNAL = {Bernoulli. Official Journal of the Bernoulli Society for
              Mathematical Statistics and Probability},
    VOLUME = {16},
      YEAR = {2010},
    NUMBER = {2},
     PAGES = {343--360},
      ISSN = {1350-7265,1573-9759},
   MRCLASS = {60H10 (60G30 60H07 60H15)},
  MRNUMBER = {2668905},
       DOI = {10.3150/09-BEJ215},
       URL = {https://doi.org/10.3150/09-BEJ215},
}

@article {FJ2022,
    AUTHOR = {Friesen, Martin and Jin, Peng},
     TITLE = {Volterra square-root process: stationarity and regularity of
              the law},
   JOURNAL = {Ann. Appl. Probab.},
  FJOURNAL = {The Annals of Applied Probability},
    VOLUME = {34},
      YEAR = {2024},
    NUMBER = {1A},
     PAGES = {318--356},
      ISSN = {1050-5164,2168-8737},
   MRCLASS = {60G22 (45D05 91G20)},
  MRNUMBER = {4696279},
       DOI = {10.1214/23-aap1965},
       URL = {https://doi.org/10.1214/23-aap1965},
}

@article {MR4342752,
    AUTHOR = {Harang, Fabian Andsem and Perkowski, Nicolas},
     TITLE = {{$C^\infty$}-regularization of {ODE}s perturbed by noise},
   JOURNAL = {Stoch. Dyn.},
  FJOURNAL = {Stochastics and Dynamics},
    VOLUME = {21},
      YEAR = {2021},
    NUMBER = {8},
     PAGES = {Paper No. 2140010, 29},
      ISSN = {0219-4937,1793-6799},
   MRCLASS = {60H05 (34A12 45D05 60H20)},
  MRNUMBER = {4342752},
MRREVIEWER = {Jing\ Cui},
       DOI = {10.1142/S0219493721400104},
       URL = {https://doi.org/10.1142/S0219493721400104},
}

@article{CATELLIER20162323,
title = {Averaging along irregular curves and regularisation of {ODE}s},
journal = {Stochastic Processes and their Applications},
volume = {126},
number = {8},
pages = {2323-2366},
year = {2016},
issn = {0304-4149},
doi = {https://doi.org/10.1016/j.spa.2016.02.002},
url = {https://www.sciencedirect.com/science/article/pii/S0304414916000235},
author = {R. Catellier and M. Gubinelli},
keywords = {Regularization by noise, Stochastic differential equation, Young integral, Fractional Brownian motion},
abstract = {We consider the ordinary differential equation (ODE) dxt=b(t,xt)dt+dwt where w is a continuous driving function and b is a time-dependent vector field which possibly is only a distribution in the space variable. We quantify the regularising properties of an arbitrary continuous path w on the existence and uniqueness of solutions to this equation. In this context we introduce the notion of ρ-irregularity and show that it plays a key role in some instances of the regularisation by noise phenomenon. In the particular case of a function w sampled according to the law of the fractional Brownian motion of Hurst index H∈(0,1), we prove that almost surely the ODE admits a solution for all b in the Besov–Hölder space B∞,∞α+1 with α>−1/2H. If α>1−1/2H then the solution is unique among a natural set of continuous solutions. If H>1/3 and α>3/2−1/2H or if α>2−1/2H then the equation admits a unique Lipschitz flow. Note that when α<0 the vector field b is only a distribution, nonetheless there exists a natural notion of solution for which the above results apply.}
}

@article{BLM23,
      title={Stochastic equations with singular drift driven by fractional {B}rownian motion}, 
      author={Butkovsky, Oleg and L\^{e}, Khoa and Mytnik, Leonid},
      year={2023},
      journal = {arXiv:2302.11937},
      eprint={2302.11937},
      archivePrefix={arXiv},
      primaryClass={math.PR}
}

@article {GG22,
    AUTHOR = {Galeati, Lucio and Gerencs\'er, M\'at\'e},
     TITLE = {Solution theory of fractional {SDE}s in complete subcritical
              regimes},
   JOURNAL = {Forum Math. Sigma},
  FJOURNAL = {Forum of Mathematics. Sigma},
    VOLUME = {13},
      YEAR = {2025},
     PAGES = {Paper No. e12, 66},
      ISSN = {2050-5094},
   MRCLASS = {60H10 (35D30 35Q49 35Q83 35R60 60G22 60H50)},
  MRNUMBER = {4854429},
MRREVIEWER = {Marco\ P.\ Cabral},
       DOI = {10.1017/fms.2024.136},
       URL = {https://doi.org/10.1017/fms.2024.136},
}

@article {MR4836009,
    AUTHOR = {Abi Jaber, Eduardo and Cuchiero, Christa and Pelizzari, Luca
              and Pulido, Sergio and Svaluto-Ferro, Sara},
     TITLE = {Polynomial {V}olterra processes},
   JOURNAL = {Electron. J. Probab.},
  FJOURNAL = {Electronic Journal of Probability},
    VOLUME = {29},
      YEAR = {2024},
     PAGES = {Paper No. 176, 37},
      ISSN = {1083-6489},
   MRCLASS = {60H10 (45D05 60H30 60K50)},
  MRNUMBER = {4836009},
MRREVIEWER = {Marco\ P.\ Cabral},
       DOI = {10.1214/24-ejp1234},
       URL = {https://doi.org/10.1214/24-ejp1234},
}

@article {FGW25,
    AUTHOR = {Friesen, Martin and Gerhold, Stefan and Wiedermann, Kristof},
     TITLE = {Failure of the {M}arkov property for stochastic {V}olterra equations},
   JOURNAL = {arXiv:2512.08926},
  FJOURNAL = {},
    VOLUME = {},
      YEAR = {2025},
     PAGES = {},
}

@article {BP24,
    AUTHOR = {Bondi, Alessandro and Pulido, Sergio},
     TITLE = {{F}eller's test for explosions of stochastic {V}olterra equations},
   JOURNAL = {arXiv:2406.13537},
  FJOURNAL = {},
    VOLUME = {},
      YEAR = {2024},
     PAGES = {},
}

@article {MR2565842,
    AUTHOR = {Zhang, Xicheng},
     TITLE = {Stochastic {V}olterra equations in {B}anach spaces and
              stochastic partial differential equation},
   JOURNAL = {J. Funct. Anal.},
  FJOURNAL = {Journal of Functional Analysis},
    VOLUME = {258},
      YEAR = {2010},
    NUMBER = {4},
     PAGES = {1361--1425},
      ISSN = {0022-1236,1096-0783},
   MRCLASS = {60H15 (60H20)},
  MRNUMBER = {2565842},
MRREVIEWER = {Constantin\ Tudor},
       DOI = {10.1016/j.jfa.2009.11.006},
       URL = {https://doi.org/10.1016/j.jfa.2009.11.006},
}

\end{document}